\def\Aint{-\nobreak \hskip-10.8pt \nobreak\int}
\def\R{\mathbb R}
\def\N{\mathbb N}
\def\Z{\mathbb{Z}}
\def\NN{\mathcal{N}}
\def\W{\mathcal{W}}
\def\u{\mathbf{u}}
\def\vv{\mathbf{v}}
\def\x{\mathbf{x}}
\def\tp{\textup}
\def\NN{\mathcal{N}}
\def\w{\mathbf{w}}
\def\F{\mathbf{F}}
\def\S{\mathcal{S}}
\def\T{\mathbb{T}}
\def\0{\mathbf{0}}
\newcommand{\norm}[1]{\left\lVert#1\right\rVert}
\newcommand{\abs}[1]{\left\lvert#1\right\rvert}
\newcommand{\defeq}{\mathrel{:\mkern-0.25mu=}}
\newcommand{\eqdef}{\mathrel{=\mkern-0.25mu:}}
\newtheorem{theorem}{Theorem}[section]
\newtheorem{lemma}[theorem]{Lemma}
\newtheorem{proposition}[theorem]{Proposition}
\newtheorem{corollary}[theorem]{Corollary}
\newtheorem{claim}[theorem]{Claim}
\theoremstyle{definition}
\newtheorem{definition}[theorem]{Definition}
\theoremstyle{remark}
\newtheorem{remark}[theorem]{Remark}
\title[Integrability properties of Leray-Hopf solutions]{On the integrability properties of Leray-Hopf solutions of the Navier-Stokes equations on $\R^3$}
\begin{document}

\author{Sauli Lindberg}
\address{Department of Mathematics and Statistics, University of Helsinki, P.O. Box 68, 00014
Helsingin yliopisto, Finland}
\curraddr{}
\email{sauli.lindberg@helsinki.fi}
\thanks{The author acknowledges support by the ERC Advanced Grant 834728 and the Academy of Finland CoE FiRST}

\begin{abstract}
Let $r,s \in [2,\infty]$ and consider the Navier-Stokes equations on $\R^3$. We study the following two questions for suitable $s$-homogeneous Banach spaces $X \subset \S'$: does every $\u_0 \in L^2_\sigma$ have a weak solution that belongs to $L^r(0,\infty;X)$, and are the $L^r(0,\infty;X)$ norms of the solutions bounded uniformly in viscosity? We show that if $\frac{2}{r} + \frac{3}{s} < \frac{3}{2}-\frac{1}{2r}$, then for a Baire generic datum $\u_0 \in L^2_\sigma$, no weak solution $\u^\nu$ belongs to $L^r(0,\infty;X)$. If $\frac{3}{2}-\frac{1}{2r} \leq \frac{2}{r} + \frac{3}{s} < \frac{3}{2}$ instead, global solvability in $L^r(0,\infty;X)$ is equivalent to the \emph{a priori} estimate $\norm{\u^\nu}_{L^r(0,\infty;X)} \leq C \nu^{3-5/r-6/s} \norm{\u_0}_{L^2}^{4/r+6/s-2}$. Furthermore, we can only have $\limsup_{\nu \to 0} \norm{\u^\nu}_{L^r(0,\infty;Z)} < \infty$ for all $\u_0 \in L^2_\sigma$ if $\frac{2}{r} + \frac{3}{s}= \frac{3}{2}-\frac{1}{2r}$.

The above results and their variants rule out, for a Baire generic $L^2_\sigma$ datum, $L^4(0,T;L^4)$ integrability and various other known sufficient conditions for the energy equality. As another application, for suitable 2-homogeneous Banach spaces $Z \hookrightarrow L^2_\sigma$, each $\u_0 \in Z$ has a Leray-Hopf solution $\u \in L^3(0,\infty;\dot{B}_{3,\infty}^{1/3})$ if and only if a uniform-in-viscosity bound $\norm{\u}_{L^3(0,\infty;\dot{B}_{3,\infty}^{1/3})}^3 \leq C \norm{\u_0}_Z^2$ holds.

As a by-product we show that if global regularity holds for the Navier-Stokes equations, then for a Baire generic $L^2_\sigma$ datum, the Leray-Hopf solution is unique and satisfies the energy equality. We also show that if global regularity holds in the Euler equations, then anomalous energy dissipation must fail for a Baire generic $L^2_\sigma$ datum. These two results also hold on the torus $\T^3$.
\end{abstract}

\maketitle

\section{Introduction}
The aim of this paper is to clarify some relations between Leray-Hopf solutions of the 3D incompressible Navier-Stokes equations and integrability conditions which are known to guarantee smoothness, uniqueness or the energy equality. Most of the results either rule out an integrability class or give a necessary condition for integrability in terms of an \emph{a priori} estimate, taking advantage of techniques introduced by Guerra, Koch and the author in~\cite{GKL23}. As a by-product, we also prove some necessary conditions for global regularity in the Navier-Stokes and Euler equations.

\subsection{Background} \label{ss:Background}
We start by recalling some relevant notions and classical results on the Navier-Stokes equations (see e.g.~\cite{Gal00,RRS16}). We consider the Cauchy problem
\begin{align}
& \partial_t \u + (\u \cdot \nabla) \u - \nu \Delta \u + \nabla p = 0, \qquad \nabla \cdot \u = 0, \label{e:NS1}\\
& \u(\cdot,0) = \u_0, \label{e:NS2}
\end{align}
where $\u$ is the velocity field, $p$ is the pressure, $\nu > 0$ is the kinematic viscosity and the initial datum $\u_0$ is solenoidal. We will work on $\R^3$ unless otherwise specified.

Given an initial datum $\u_0 \in L^2_{\tp{loc}} = L^2_{\tp{loc}}(\R^3)$ with $\nabla \cdot \u_0 = 0$, a vector field $\u \in L^2_{\tp{loc}}([0,T) \times \R^3)$ is called a \emph{very weak solution} (or \emph{distributional solution}) of \eqref{e:NS1}--\eqref{e:NS2} if $\nabla \cdot \u(\cdot,t) = 0$ a.e. $t \in (0,T)$ and
\begin{equation} \label{e:Weak solution}
\int_0^T \int_{\R^3} [\u \cdot (\partial_t \varphi + \nu \Delta \varphi) + \u \otimes \u \colon \nabla \varphi] \, dx \, dt = - \int_{\R^3} \u_0 \cdot \varphi(\cdot,0) \, dx
\end{equation}
for all $\varphi \in C_c^\infty([0,T) \times \R^3)$ with $\nabla \cdot \varphi = 0$. (If $\nu = 0$ in \eqref{e:Weak solution}, then $\u$ is called a \emph{weak solution of the Euler equations}.) If $\u_0 \in L^2_\sigma$ and $\u \in L^\infty(0,T;L^2_\sigma) \cap L^2(0,T;\dot{H}^1)$, then $\u$ is called a \emph{weak solution} of \eqref{e:NS1}--\eqref{e:NS2}. A weak solution can be redefined on a set of times of measure zero to get $\u \in C_w([0,T);L^2_\sigma)$, and in particular $\u(t) \rightharpoonup \u_0$ in $L^2_\sigma$ as $t \to 0$. We will henceforth use this weakly continuous representative.

A weak solution $\u$ on $\R^3 \times [0,\infty)$ is called a \emph{Leray-Hopf solution} if it satisfies the \emph{energy inequality}
\begin{equation} \label{e:Energy inequality}
\frac{1}{2} \norm{\u(t)}_{L^2}^2 + \nu \int_0^t \norm{\nabla \u(\tau)}_{L^2}^2 d\tau \leq \frac{1}{2} \norm{\u_0}_{L^2}^2
\end{equation}
at every $t > 0$. It is open whether every weak solution is a Leray-Hopf solution. The existence of a Leray-Hopf solution for every $\u_0 \in L^2_\sigma$ was proved by Leray~\cite{Ler34} on $\R^3$ and by Hopf~\cite{Hop51} in all domains $\Omega \subset \R^3$. Given $\u_0 \in L^2_\sigma$ we denote
\begin{align*}
& \mathcal{N}_\nu(\u_0) \defeq \{\text{Leray-Hopf solutions of \eqref{e:NS1}--\eqref{e:NS2}}\}, \\
& \mathcal{W}_\nu(\u_0) \defeq \{\text{weak solutions of \eqref{e:NS1}--\eqref{e:NS2}}\}.
\end{align*}
Every Leray-Hopf solution satisfies $\lim_{t \to 0} \norm{\u(t)-\u_0}_{L^2} = 0$.

By interpolation, weak solutions of \eqref{e:NS1}--\eqref{e:NS2} belong to $L^r(0,\infty;L^s)$ whenever $s \in [2,6]$ and
\begin{equation} \label{e:Stokes scaling}
\frac{2}{r} + \frac{3}{s} = \frac{3}{2}.
\end{equation}
In the case of the Euler equations, the correct scaling is $\frac{2}{r} + \frac{3}{s} = \frac{3}{2}-\frac{1}{2r}$, that is,
\begin{equation} \label{e:Euler scaling}
\frac{5}{3r} + \frac{2}{s} = 1,
\end{equation}
in the sense that if \eqref{e:Euler scaling} fails, then a weak solution $\u \in L^r(0,\infty;L^s)$ only exists for a meagre set of data $\u_0 \in L^2_\sigma$; see Theorem \ref{t:Euler}. The result on the Euler equations does not seem to have appeared in the literature, but it follows directly from~\cite[Theorem 3.8]{GKL23}. In \textsection \ref{s:The proof of generic non-solvability theorem} we present a short, self-contained proof. The main idea is to relax the Euler equations into a linear system and thereby generate new scaling symmetries, which allows freer use of scaling arguments.

It is open whether Leray-Hopf solutions are unique, belong to $C^\infty((0,\infty) \times \R^3)$ and/or satisfy the \emph{energy equality} (equality sign in \eqref{e:Energy inequality}). In two recent breakthroughs, Buckmaster and Vicol showed the non-uniqueness of $C([0,T];H^\beta(\T^3))$ mild solutions for a small $\beta > 0$~\cite{BV19} and Albritton, Bru\'e and Colombo showed the non-uniqueness of Leray-Hopf solutions on $\R^3$ under a suitable external forcing $\F \in L^1(0,T;L^2)$ and with $\u_0 = 0$~\cite{ABC}.

If, however, a Leray-Hopf solution $\u$ belongs to $L^r(0,T;L^s)$, where $r$ and $s$ satisfy an \emph{LPS (Ladyzhenskaya-Prodi-Serrin) condition}
\begin{equation} \label{e:Prodi-Serrin}
\frac{2}{r} + \frac{3}{s} = 1, \qquad s \in [3,\infty],
\end{equation}
then $\u$ is well-known to be unique and smooth in $\R^3 \times (0,T]$ and satisfy the energy equality. For the smoothness see e.g.~\cite[Theorem 8.17]{RRS16} when $s > 3$; the case $s = 3$ is much harder and was proved by Escauriaza, Seregin and \v{S}ver\'ak~\cite{ISS03}. 
A classical weak-strong uniqueness statement under LPS conditions is formulated in Theorem \ref{t:Weak-strong uniqueness}. Cheskidov and Luo have shown that for every $d \geq 2$ and $p \in [1,2)$, uniqueness fails for very weak solutions in $L^p(0,T;L^\infty(\T^d))$, even for smooth initial data (uniqueness holds in the LPS class $L^2(0,T;L^\infty(\T^d)$)~\cite{CL22}.

Leray-Hopf solutions generally fail to satisfy LPS conditions. More precisely, if $\u_0 \in L^2_\sigma$ and \eqref{e:Prodi-Serrin} holds with $3 \leq s < \infty$, then there exists $\u \in \mathcal{N}_\nu(\u_0) \cap \cup_{T>0} L^r(0,T;L^s)$ if and only if $e^{t\nu\Delta} \u_0 \in \cup_{T > 0} L^r(0,T;L^s)$ (if and only if $\u_0$ belongs to the Besov space $\dot{B}_{s,r}^{-2/r}$). This seems to be a folklore result, but in bounded $C^{2,1}$ smooth domains it is due to Farwig, Sohr and Varnhorn~\cite{FS09,FSV09}; see also~\cite{FGH16} for a version on critical time-weighted LPS-type classes and~\cite{LR22} for more general spaces of initial data.

The energy equality is, however, also known to hold under weaker assumptions than \eqref{e:Prodi-Serrin}. The case $\u \in \W_\nu(\u_0) \cap L^4(0,T;L^4)$ is due to Lions~\cite{Lio60}, and Shinbrot extended the result to $\frac{2}{r} + \frac{2}{s} \leq 1$, $s \geq 4$  and all dimensions $d \geq 2$~\cite{Shi74}. We will pay special attention to another regularity class that guarantees the energy equality for weak solutions, namely $L^3(0,T;B_{3,\infty}^{1/3})$~\cite[Theorem 6.1]{CCFS08}. We have $L^\infty(0,T;L^2) \cap L^3(0,T;\dot{B}_{3,\infty}^{1/3}) \hookrightarrow L^3(0,T;B_{3,\infty}^{1/3})$ (see Lemma \ref{l:Interpolation}). The Besov space $\dot{B}_{3,\infty}^{1/3}$ is $\frac{9}{2}$-homogeneous, and therefore $L^3(0,\infty;\dot{B}_{3,\infty}^{1/3})$ satisfies \eqref{e:Euler scaling}.
More generally, when $1 \leq r \leq 3$, solutions of the Navier-Stokes equations in $L^r(0,\infty;\dot{B}_{r,\infty}^{\beta_r})$,
\begin{equation} \label{e:Besov exponent}
\beta_r \defeq \frac{11-3r}{2r},
\end{equation}
satisfy \eqref{e:Euler scaling} and the energy equality~\cite[Theorem 1.3]{CL20}. For other recently discovered energy conservation criteria see, for instance,~\cite{BC20,CL20,LS18} and the references contained therein.

To the author's knowledge it has remained open whether every Leray-Hopf solution belongs to $\cup_{T>0} L^4(0,T;L^4)$. (The argument at~\cite[p. 641]{FS09} rules out $L^r(0,T;L^s)$ integrability for generic Leray-Hopf solutions whenever $\frac{2}{r} + \frac{3}{s}<\frac{5}{4}$; see Proposition \ref{p:No higher integrability}.) We give a negative answer in Theorem \ref{t:Relaxed} and Corollary \ref{c:Navier-Stokes}.

In \textsection \ref{ss:Modified solution and data spaces} we study the following more general questions: for which function spaces $X$ and $Z$ does every $\u_0 \in L^2_\sigma \cap Z$ have a weak solution $\u^\nu \in L^r(0,\infty;X)$, or $\u^\nu \in \cup_{T>0} L^r(0,T;X)$? For which $X,Z$ is $\limsup_{\nu \to 0} \inf_{\u^\nu \in \W_\nu(\u_0)} \norm{\u^\nu}_{L^r(0,\infty;\infty)} < \infty$ for all $\u_0 \in L^2_\sigma \cap Z$? The reason for considering weak solutions instead of Leray-Hopf solutions is that the class of Leray-Hopf solutions is not known to be stable under weak$^*$ convergence; see Remark \ref{r:Weak* stability}. In \textsection \ref{ss:Time-weighted LPS classes and necessary and sufficient conditions for global regularity} we study similar questions for LPS classes endowed with a time weight.

\subsection{The main result} \label{ss:Modified solution and data spaces}

In the main result of this paper, Theorem \ref{t:Relaxed}, we prove that in order to rule out a solution space $L^r(0,\infty;X)$, it is enough to rule out the \emph{a priori} estimate \eqref{e:Supercritical a priori}. Theorem \ref{t:Relaxed} also shows that for ruling out uniform-in-$\nu$ bounds in $L^r(0,\infty;X)$ it suffices to rule out the \emph{a priori} estimate \eqref{e:Uniform a priori}. In order to state Theorem \ref{t:Relaxed} we fix some terminology.
\begin{definition}
A Banach space $\{0\} \subsetneq X \hookrightarrow \S'$ is said to be \emph{$p$-homogeneous} if $\vv(\lambda^{-1} \cdot) \in X$ and $\norm{\vv(\lambda^{-1} \cdot)}_X \approx_Z \lambda^{3/p} \norm{\vv}_X$ for all $\vv \in X$ and $\lambda > 0$, and $X$ is said to be \emph{isometrically translation invariant} if $\vv(\cdot-x_0) \in X$ and $\norm{\vv(\cdot-x_0)}_X = \norm{\vv}_X$ for every $\vv \in X$ and $x_0 \in \R^3$. (Using \emph{ad hoc} terminology,) a Banach space $Y$ satisfies the \emph{Fatou property} with respect to a dual Banach space $Z$ if $\sup_{j \in \N} \norm{\vv_j}_Y = C < \infty$ and $\vv_j \overset{*}{\rightharpoonup} \vv$ in $Z$ imply that $\vv \in Y$ and $\norm{\vv}_Y \leq C$.
\end{definition}

\begin{theorem} \label{t:Relaxed}
Let $r,s \in (4/3,\infty]$ with $1 < \frac{2}{r} + \frac{3}{s} < \frac{3}{2}$. Suppose $X,Z \subset \S'$ are isometrically translation invariant Banach spaces, $X$ is $s$-homogeneous and $Z$ is $2$-homogeneous. Suppose $L^r(0,\infty;X)$ and $Z$ have the Fatou property with respect to $L^\infty(0,\infty;L^2)$ and $L^2_\sigma$, respectively. The implications
\[\text{\eqref{i:Maximum}} \quad \Leftarrow \quad \text{\eqref{i:Non-meagre}} \quad \Leftrightarrow \quad \text{\eqref{i:A priori}} \quad \Leftarrow \quad \text{\eqref{i:Uniform in nu}} \quad \Leftrightarrow \quad \text{\eqref{i:A priori uniform in nu}} \quad \Rightarrow \quad \text{\eqref{i:Exponents}}\]
hold between the claims below:
\begin{enumerate}[(i)]
\item For some $\nu > 0$, $\u^\nu \in \mathcal{W}_\nu(\u_0) \cap L^r(0,\infty;X)$ exists for a non-meagre set of data $\u_0 \in L^2_\sigma \cap Z$. \label{i:Non-meagre}

\item There exists $C > 0$ such that for every $\nu > 0$ and $\u_0 \in L^2_\sigma \cap Z$ there exists $\u^\nu \in \mathcal{W}_\nu(\u_0)$ with
\begin{align}
& \norm{\u^\nu}_{L^\infty(0,\infty;L^2)} + \sqrt{\nu} \norm{\nabla \u^\nu}_{L^2(0,\infty;L^2)} \leq C \max \left\{ \norm{\u_0}_{L^2}, \norm{\u_0}_Z \right\}, \label{e:Minimum for energy} \\
& \norm{\u^\nu}_{L^r(0,\infty;X)}
\leq C \nu^{-3 \left( \frac{5}{3r} + \frac{2}{s} - 1 \right)} \max \left\{ \norm{\u_0}_{L^2}^{2 \left( \frac{2}{r} + \frac{3}{s} - 1 \right)}, \norm{\u_0}_Z^{2 \left( \frac{2}{r} + \frac{3}{s} - 1 \right)} \right\}. \label{e:Supercritical a priori}
\end{align}
\label{i:A priori}

\item $\frac{5}{3r} + \frac{2}{s} \geq 1$.
\label{i:Maximum}

\item For a non-meagre set of data $\u_0 \in L^2_\sigma \cap Z$ we have
\[\limsup_{\nu \to 0} \inf_{\u^\nu \in \W_\nu(\u_0)} (\norm{\u^\nu}_{L^\infty(0,\infty;L^2)} + \sqrt{\nu} \norm{\nabla \u^\nu}_{L^2(0,\infty;L^2)} + \norm{\u^\nu}_{L^r(0,\infty;X)}) < \infty.\] \label{i:Uniform in nu}

\item There exist $\nu_0, C > 0$ such that for all $\nu > 0$, $\tilde{\nu} \in (0,\nu_0]$ and $\u_0 \in L^2_\sigma \cap Z$ there exists $\u^\nu \in \W_\nu(\u_0)$ with
\begin{align}
& \norm{\u^\nu}_{L^\infty(0,\infty;L^2)} + \sqrt{\nu} \norm{\nabla \u^\nu}_{L^2(0,\infty;L^2)} \leq C \max \left\{ \norm{\u_0}_{L^2}, \norm{\u_0}_Z \right\} \label{e:Minimum uniformly} \\
& \norm{\u^\nu}_{L^r(0,\infty;X)}
\leq C \left( \frac{\tilde{\nu}}{\nu} \right)^{3 \left( \frac{5}{3r} + \frac{2}{s} - 1 \right)} \max \left\{ \norm{\u_0}_{L^2}^{2 \left( \frac{2}{r} + \frac{3}{s} - 1 \right)}, \norm{\u_0}_Z^{2 \left( \frac{2}{r} + \frac{3}{s} - 1 \right)} \right\}. \label{e:Uniform a priori}
\end{align}
\label{i:A priori uniform in nu}

\item $\frac{5}{3r} + \frac{2}{s} = 1$.
\label{i:Exponents}
\end{enumerate}
\end{theorem}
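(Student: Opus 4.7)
The proof combines Baire category, the scaling symmetries of Navier--Stokes, and a weak-limit dimensional argument, adapting the strategy of \cite{GKL23}. The easy implications are (ii)$\Rightarrow$(i) and (v)$\Rightarrow$(iv) (an \emph{a priori} estimate over the full Banach space $L^2_\sigma \cap Z$ gives solvability on the whole non-meagre space; for the $\limsup$ version one picks $\tilde\nu=\nu$ whenever $\nu\leq \nu_0$). For (iv)$\Rightarrow$(i), write the set in (iv) as a countable union, over $k,M\in\N$, of the sublevel sets $S_{k,M} := \{\u_0 : \forall\, \nu\in(0,1/k],\ \exists\, \u^\nu \in \W_\nu(\u_0)$ with $(\cdots)\leq M\}$, each closed by the Fatou property; Baire applied to the non-meagre union singles out a non-meagre $S_{k_0,M_0}$, giving (i) at $\nu=1/k_0$.

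For the main step (i)$\Rightarrow$(ii), define
\[
A_M := \{\u_0 \in L^2_\sigma \cap Z : \exists\,\u^\nu \in \W_\nu(\u_0),\ \norm{\u^\nu}_{L^\infty L^2} + \sqrt{\nu}\,\norm{\nabla\u^\nu}_{L^2 L^2} + \norm{\u^\nu}_{L^r(0,\infty;X)} \leq M\}.
\]
Each $A_M$ is closed: given $\u_{0,n}\to\u_0$ in $L^2_\sigma \cap Z$ with witnessing solutions $\u^\nu_n$, the uniform energy bound plus Aubin--Lions extracts a weak subsequential limit $\u^\nu\in\W_\nu(\u_0)$, and the Fatou properties of $L^r(0,\infty;X)$ (w.r.t.\ $L^\infty L^2$) and $Z$ (w.r.t.\ $L^2_\sigma$) carry both bounds through. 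By (i), $\bigcup_M A_M$ is non-meagre, so Baire produces $\u_{0,*}$ and $\delta>0$ with $B(\u_{0,*},\delta)\subset A_{M_0}$. To propagate this local bound, for any $\vv_0$ with $\norm{\vv_0}_{L^2_\sigma \cap Z} < \delta/2$ pick translates $|x_n|\to\infty$: the datum $\u_{0,*}+\tau_{x_n}\vv_0 \in B(\u_{0,*},\delta)$ admits a solution $\u^\nu_n$ with $\norm{\u^\nu_n}_{L^r(0,\infty;X)}\leq M_0$; translating back, $\tau_{-x_n}\u^\nu_n$ solves \eqref{e:NS1}--\eqref{e:NS2} with datum $\tau_{-x_n}\u_{0,*}+\vv_0 \overset{*}{\rightharpoonup} \vv_0$ in $L^2_\sigma \cap Z$ (translates of a fixed element vanish weakly), and a weak-$*$ subsequential limit is, via the Fatou property and Aubin--Lions, a weak solution for $\vv_0$ of norm $\leq M_0$. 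Finally, a scaling argument under $(\u_0,\nu)\mapsto (\lambda\mu\,\u_0(\lambda\,\cdot),\mu\nu)$---the exponents $a=2(\tfrac{2}{r}+\tfrac{3}{s}-1)$ and $b=-3(\tfrac{5}{3r}+\tfrac{2}{s}-1)$ being exactly those making \eqref{e:Minimum for energy}--\eqref{e:Supercritical a priori} scale-invariant---promotes the small-data bound to arbitrary data. The equivalence (iv)$\Leftrightarrow$(v) follows the same template applied to sublevel sets of $\u_0\mapsto \limsup_{\nu\to 0}\inf_{\u^\nu\in\W_\nu(\u_0)}(\cdots)$.

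The dimensional implications (ii)$\Rightarrow$(iii) and (v)$\Rightarrow$(vi) reduce to a weak-limit contradiction. Writing $e := 3(\tfrac{5}{3r}+\tfrac{2}{s}-1)$, the $\nu$-exponent in \eqref{e:Supercritical a priori} is $-e$ and the $(\tilde\nu/\nu)$-exponent in \eqref{e:Uniform a priori} is $e$. If (iii) fails, then $e<0$: fix any $\u_0\ne 0$ and send $\nu\to 0$ in \eqref{e:Supercritical a priori} to obtain $\u^\nu$ with $\norm{\u^\nu}_{L^r(0,\infty;X)}\to 0$; weak-$*$ compactness from \eqref{e:Minimum for energy} plus the Fatou property extracts a weak Euler limit $\u$ with $\norm{\u}_{L^r(0,\infty;X)}=0$, forcing $\u\equiv 0$ via the $C_w L^2_\sigma$ representative and contradicting $\u(0)=\u_0$. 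For (v)$\Rightarrow$(vi) the same argument applied to \eqref{e:Uniform a priori}---sending $\tilde\nu\to 0^+$ with $\nu$ fixed when $e>0$, or $\nu\to 0^+$ with $\tilde\nu$ fixed when $e<0$---forces $e=0$.

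The main obstacle is the far-translate decoupling in (i)$\Rightarrow$(ii): one must verify that the weak-$*$ limit of $\tau_{-x_n}\u^\nu_n$ is genuinely a weak solution for $\vv_0$---the quadratic term $\u\otimes\u$ demands local strong compactness via Aubin--Lions leveraging \eqref{e:Minimum for energy}---and that the $L^r(0,\infty;X)$ norm is inherited correctly rather than escaping to the ``background'' piece $\tau_{-x_n}\u_{0,*}$, both hinging on the Fatou property assumed of $L^r(0,\infty;X)$.
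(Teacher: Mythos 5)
Your architecture for (i)$\Leftrightarrow$(ii) and (iv)$\Leftrightarrow$(v) is the same as the paper's: closed sublevel sets plus Baire, then far translates $\tau_{x_n}$ with $\abs{x_n}\to\infty$ to move the ball $B(\u_{0,*},\delta)$ to the origin (this is exactly the content of the paper's Lemma \ref{l:GKL23}, which you re-derive in concrete form), then the two-parameter scaling group to reach arbitrary data and all viscosities; the exponents in \eqref{e:Supercritical a priori} and \eqref{e:Uniform a priori} are indeed forced by scale invariance. The worry in your last paragraph about the $L^r(0,\infty;X)$ norm ``escaping to the background piece'' is not a real issue: the whole translated solution $\tau_{-x_n}\u^\nu_n$ has norm at most $M_0$, the Fatou property passes that bound to the limit wholesale, and Aubin--Lions is available there because $\nu$ is \emph{fixed}, so $\sqrt{\nu}\norm{\nabla\u^\nu_n}_{L^2(0,\infty;L^2)}\leq M_0$ is a genuine uniform $L^2\dot H^1$ bound (this is the paper's Proposition \ref{p:Weak* stability}).

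The genuine gap is in (ii)$\Rightarrow$(iii), and in the $e<0$ branch of your (v)$\Rightarrow$(vi): you send $\nu_j\to0$ and assert that the weak-$*$ limit is ``a weak Euler limit.'' Along $\nu_j\to0$ the bound \eqref{e:Minimum for energy} only controls $\sqrt{\nu_j}\norm{\nabla\u^{\nu_j}}_{L^2(0,\infty;L^2)}$, so there is no uniform gradient bound, Aubin--Lions gives nothing, and you cannot pass to the limit in $\u^{\nu_j}\otimes\u^{\nu_j}$; the limit need not solve the Euler equations, so the contradiction ``$\u\equiv0$ yet $\u(0)=\u_0$'' is not yet available. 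The paper's device is to pass to the limit in the \emph{linear} relaxed system \eqref{e:Relaxed NS}: the energy bound makes $\u^{\nu_j}\otimes\u^{\nu_j}$ weak-$*$ precompact in $L^\infty(0,\infty;\mathbf{M})$ with some limit $S$, the pair $(\u,S)$ solves \eqref{e:Relaxed NS} with datum $\u_0$ because that system is linear in $(\u,S)$, and one then argues $(\u,S)=(0,0)$ from the vanishing of $\norm{\u^{\nu_j}}_{L^r(0,\infty;X)}$ and derives the contradiction there (alternatively, once $\u=0$ is known, testing with $\varphi(x,t)=\eta(t)\psi(x)$, $\eta(0)=1$, $\tp{supp}\,\eta\subset[0,\epsilon]$, $\epsilon\to0$, kills the $S$-term and forces $\u_0=0$). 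You should also relocate your ``main obstacle'': the delicate point is not the far-translate decoupling at fixed $\nu$ but this degenerate-viscosity limit.
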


The \emph{a priori} estimates \eqref{e:Supercritical a priori}--\eqref{e:Uniform a priori} are in the spirit of the works of Gallagher~\cite{Gal01}, Tao~\cite{Tao07,Tao13}, Rusin and \v{S}ver\'ak~\cite{RS11} and Jia and \v{S}ver\'ak~\cite{JS13}. Their use is at least three-fold. First, they reassure one that one cannot prove solvability in $L^r(0,\infty;X)$ by "soft methods" alone, without also proving a uniform estimate on the norms $\norm{\u^\nu}_{L^r(0,\infty;X)}$. Second, the \emph{a priori} estimates allow one to rule out plausible solution classes via scaling arguments; this aspect is treated extensively in an abstract setting in~\cite{GKL23}. Third, they sometimes allow one to infer global solvability in a function space from local solvability, by iteratively using the fact that kinetic energy is decreasing.

The energy inequality \eqref{e:Energy inequality} allows one to control the kinetic energy and cumulative energy dissipation globally in time, but no other coercive globally controlled quantities have been found in the 3D Navier-Stokes equations. Gallagher's \emph{a priori} estimate holds for critical spaces of initial data such as $L^3$ and $\dot{H}^{1/2}$, and uses profile decompositions. As a consequence of the result, global control on such critical norms is necessary for global regularity (for data in said spaces). Supercritical quantities, such as those studied in Theorem \ref{t:Relaxed}, seem unlikely to be of decisive use in proving global regularity (see, e.g.,~\cite[\textsection 3.4]{Tao08}), but some of them would imply, among other things, the energy equality and the validity of the inviscid limit (at least in a weak$^*$ sense).

The supercritical setting of Theorem \ref{t:Relaxed} allows us to find a rather elementary proof based on Baire category and a functional analytical lemma from~\cite{GKL23} (Lemma \ref{l:GKL23} in this paper). Baire category often allows one to infer from a qualitative condition in a non-meagre set (such as \eqref{i:Non-meagre}) a quantitative condition in some ball. If the quantitative condition is stable under translations and weak$^*$ convergence, then Lemma \ref{l:GKL23} allows one to assume that the ball is centred at the origin, thereby letting one unleash the scaling symmetries of the system and prove a uniform estimate.


With a bit of work (see Corollary \ref{c:Navier-Stokes}) Theorem \ref{t:Relaxed} implies that Leray-Hopf solutions generically fail to belong to $\cup_{T>0} L^r(0,T;L^s)$ if $\frac{5}{2r} + \frac{3}{s} < 1$. Furthermore, uniform-in-$\nu$ bounds on $\norm{\u^\nu}_{L^r(0,\infty;L^s)}$ require that \eqref{e:Euler scaling} holds. We next consider the consequences of Theorem \ref{t:Relaxed} on Bochner-Besov spaces that satisfy \eqref{e:Euler scaling}.

\subsection{Weak solutions with Besov space regularity}
Recall that the regularity classes $L^r(0,\infty;\dot{B}_{r,\infty}^{\beta_r})$, $1 \leq r \leq 3$, $\beta_r$ as in \eqref{e:Besov exponent}, ensure the energy equality for weak solutions and satisfy the compatibility condition \eqref{e:Euler scaling}. By Theorem \ref{t:Relaxed}, $\beta_r$ are the only exponents compatible with uniform-in-viscosity bounds for 2-homogeneous spaces of initial data. The function space $L^3(0,T;B_{3,\infty}^{1/3})$ and its variant $L^3(0,T;B_{3,\infty}^{1/3}(\T^3))$ are natural in the study of high Reynolds number turbulence; see, for instance,~\cite{DE19,Eyi24,Shv10} and the references contained therein.

The function class $L^3(0,T;B_{3,\infty}^{1/3}(\T^3))$ is at a threshold for energy conservation in the Euler equations. Indeed, after several brilliant works using Nash's method of convex integration, starting with the seminal contributions of De Lellis and Sz\'ekelyhidi~\cite{DLS09,DLS13}, non-conservative $L^\infty(0,T;C^{1/3-}(\T^3))$ solutions were constructed by Isett~\cite{Ise18} and dissipative ones by Buckmaster, De Lellis, Sz\'ekelyhidi and Vicol in~\cite{BDSV19}; this gives the flexible side of the celebrated Onsager conjecture. Novack and Vicol constructed non-conservative solutions in $C([0,T];H^{1/2-}(\T^3) \cap L^{\infty-}(\T^3)) \hookrightarrow C([0,T];B^{1/3-}_{3,\infty}(\T^3))$~\cite{NV23}, and Giri, Kwon and Novack constructed non-conservative $C([0,T];B_{3,\infty}^{1/3-}(\T^3) \cap L^{\infty-}(\T^3))$ solutions with the local energy inequality~\cite{GKN23}. In the rigid side, Constantin, E and Titi showed that if $\beta > 1/3$, then $L^3(0,T;B_{3,\infty}^\beta(\T^3)) \cap C_w([0,T];L^2(\T^3))$ solutions of the Euler equations conserve kinetic energy~\cite{CET94}. In~\cite[Theorem 3.3]{CCFS08}, Cheskidov, Constantin, Friedlander and Shvydkoy extended conservation to the critical class $L^3(0,T;B_{3,c(\N)}^{1/3}) \cap C_w([0,T];L^2)$, where $B_{3,p}^{1/3} \subsetneq B_{3,c(\N)}^{1/3} \subsetneq B_{3,\infty}^{1/3}$ for all $p \in [1,\infty)$.

In the critical case, Cheskidov, Constantin, Friedlander and Shvydkoy also constructed a vector field $\vv \in B_{3,\infty}^{1/3}$ with a positive energy flux~\cite[\textsection 3.4]{CCFS08}, even though $\vv$ is not a solution of the Euler equations. It is strongly expected that some weak solutions of the Euler equations arise at the inviscid limit and dissipate kinetic energy, and $L^\infty(0,T;L^2_\sigma) \cap
L^3(0,T;\dot{B}_{3,\infty}^{1/3})$ appears to be a natural candidate space for such weak solutions; see also~\cite[Problem 4]{BV19A}. We also mention that Cheskidov and Shvydkoy have proved ill-posedness in $B_{3,\infty}^{1/3}(\T^3)$ in the following sense: there exists $\u_0 \in B_{3,\infty}^{1/3}(\T^3)$ such that if $\u \in C_w([0,T);L^2_\sigma)$ solves the Euler equations with $\u(\cdot,0) = \u_0$, then $\limsup_{t \to 0} \norm{\u(t)-\u_0}_{B_{3,\infty}^{1/3}} > 0$~\cite{CS10}. We refer to the reviews~\cite{BV19A,BV21,DLS19,DLS22,Eyi24,Shv10} for more on Onsager's turbulence theory and related topics.

It seems plausible that $\{\u_0 \in L^2_\sigma: \exists \u \in \NN_\nu(\u_0) \cap L^3(0,\infty;\dot{B}_{3,\infty}^{1/3})\}$ coincides with $L^2_\sigma \cap Z$ for some $2$-homogeneous Banach space of the form $Z = \dot{B}_{p,q}^{3(2-p)/(2p)}$ (perhaps $Z = \dot{B}_{2,2}^0 = L^2$). For definiteness, we extract the following corollary of Theorem \ref{t:Relaxed} (where one can also replace $L^3(0,\infty;\dot{B}_{3,\infty}^{1/3})$ by $L^r(0,\infty;\dot{B}_{r,\infty}^{\beta_r})$, $\frac{5}{3} \leq r < 3$).

\begin{corollary} \label{c:Relaxed}
Suppose $Z$ has the same properties as in Theorem \ref{t:Relaxed}. The following claims are equivalent:
\begin{enumerate}[(i)]
\item For some $\nu > 0$ and a non-meagre set of data $\u_0 \in L^2_\sigma \cap Z$ there exists a solution $\u \in \mathcal{N}_\nu(\u_0) \cap L^3(0,\infty;\dot{B}_{3,\infty}^{1/3})$.
\label{i:Besov existence in L2sigma}

\item There exists $C>0$ such that for every $\nu > 0$ and $\u_0 \in L^2_\sigma \cap Z$ there exists $\u^\nu \in \NN_\nu(\u_0)$ with
\[\norm{\u^\nu}_{L^3(0,\infty;\dot{B}_{3,\infty}^{1/3})}^3 \leq C \max\{\norm{\u_0}_{L^2}^2,\norm{\u_0}_Z^2\}.\]
Furthermore, for every $\u_0 \in L^2_\sigma \cap Z$ there exist $\nu_j \to 0$ such that $\u^{\nu_j}$ specified above converge weakly$^*$ in $L^\infty(0,\infty;L^2_\sigma)$ to a weak solution $\u$ of the Euler equations with $\norm{\u}_{L^3(0,\infty;\dot{B}_{3,\infty}^{1/3})}^3 \leq C \max\{\norm{\u_0}_{L^2}^2,\norm{\u_0}_Z^2\}$.
\label{i:Besov a priori estimate}
\end{enumerate}
If $Z$ is furthermore a closed subspace of $L^2_\sigma$, then \eqref{i:Besov existence in L2sigma}--\eqref{i:Besov a priori estimate} are equivalent to the following claim:
\begin{enumerate}[(i)]
\setcounter{enumi}{2}
\item For some $\nu > 0$ and a non-meagre set of data $\u_0 \in Z$ there exists a solution $\u \in \mathcal{N}_\nu(\u_0) \cap \cup_{T>0} L^3(0,T;\dot{B}_{3,\infty}^{1/3})$.
\label{i:Besov local existence}
\end{enumerate}
\end{corollary}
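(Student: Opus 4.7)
The equivalence \eqref{i:Besov existence in L2sigma}$\Leftrightarrow$\eqref{i:Besov a priori estimate} is a direct specialization of Theorem~\ref{t:Relaxed} to $r = 3$ and $X = \dot{B}_{3,\infty}^{1/3}$. Since $\dot{B}_{3,\infty}^{1/3}$ is $\frac{9}{2}$-homogeneous, $s = \frac{9}{2}$; the exponents satisfy $r,s > \frac{4}{3}$ and $\frac{2}{r}+\frac{3}{s} = \frac{4}{3} \in (1,\frac{3}{2})$, and hit the critical identity $\frac{5}{3r}+\frac{2}{s} = 1$. Isometric translation invariance of $\dot{B}_{3,\infty}^{1/3}$ is standard, and the Fatou property of $L^3(0,\infty;\dot{B}_{3,\infty}^{1/3})$ with respect to $L^\infty(0,\infty;L^2)$ follows by applying Fatou's lemma dyadic block by dyadic block. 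Because of the criticality, the viscosity factor in \eqref{e:Supercritical a priori} disappears, and cubing \eqref{e:Supercritical a priori} (using $3 \cdot 2(\frac{2}{r}+\frac{3}{s}-1) = 2$) gives exactly the bound in \eqref{i:Besov a priori estimate}. The weak solutions furnished by Theorem~\ref{t:Relaxed} are in fact Leray-Hopf: Lemma~\ref{l:Interpolation} embeds $L^\infty(L^2) \cap L^3(\dot{B}_{3,\infty}^{1/3})$ into $L^3(B_{3,\infty}^{1/3})$, and~\cite[Theorem~6.1]{CCFS08} then yields the energy equality.

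The inviscid-limit claim is then standard. I fix $\u_0$, pick $\nu_j \to 0$, and extract weak$^*$ subsequential limits in $L^\infty(0,\infty;L^2)$ and $L^3(0,\infty;\dot{B}_{3,\infty}^{1/3})$. The Besov regularity gives spatial fractional smoothness (and the Sobolev embedding $\dot{B}_{3,\infty}^{1/3} \hookrightarrow L^{9/2}$), while the equation provides time equicontinuity in a negative Sobolev space; Aubin-Lions yields strong local $L^3L^3$ convergence so the nonlinearity passes to the limit, while $\nu_j \Delta \u^{\nu_j} \to 0$ distributionally. The Fatou property transfers the Besov bound to the limit Euler solution.

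The substantive remaining implication is \eqref{i:Besov local existence}$\Rightarrow$\eqref{i:Besov a priori estimate} when $Z \hookrightarrow L^2_\sigma$ is closed (the converse is immediate). I would mimic the strategy used in the proof of Theorem~\ref{t:Relaxed}: stratify the set in \eqref{i:Besov local existence} by demanding $T \geq n^{-1}$ and the corresponding local $L^3(\dot{B}_{3,\infty}^{1/3})$-norm to be $\leq n$. Baire category locates a non-meagre stratum, and Lemma~\ref{l:GKL23}---applied with the isometric translation invariance and the Fatou property of $Z$ (the latter inherited from its closedness in $L^2_\sigma$)---converts this into a uniform local a priori estimate on a $Z$-ball at the origin: there exist $\epsilon, T_0, M > 0$ such that every $\u_0 \in Z$ with $\norm{\u_0}_Z \leq \epsilon$ admits a Leray-Hopf solution with $\norm{\u}_{L^3(0,T_0;\dot{B}_{3,\infty}^{1/3})} \leq M$. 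Critical scaling $\u \mapsto \lambda \u(\lambda \cdot, \lambda^2 \cdot)$ now exploits the fact that the inequality $\norm{\u}_{L^3(\dot{B}_{3,\infty}^{1/3})}^3 \leq C\norm{\u_0}_{L^2}^2$ is scale-invariant (both sides scaling as $\lambda^{-1}$, which is exactly the critical identity $\frac{5}{3r}+\frac{2}{s} = 1$) to promote this local bound to the global estimate in \eqref{i:Besov a priori estimate}: I scale an arbitrary $\u_0$ into the good ball, apply the local bound, scale back to a Leray-Hopf solution on a long interval, and take a weak$^*$ limit as $\lambda \to \infty$. The main obstacle is this final passage to the limit, where one must verify that the weak$^*$ limit remains a Leray-Hopf solution with initial data $\u_0$ and inherits the Besov bound; this relies on the $L^2\dot{H}^1$ control from the energy inequality, Aubin-Lions compactness to pass the nonlinearity, and the Fatou property of $L^3(0,\infty;\dot{B}_{3,\infty}^{1/3})$.
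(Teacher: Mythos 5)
Your treatment of \eqref{i:Besov existence in L2sigma}$\Leftrightarrow$\eqref{i:Besov a priori estimate} and of the inviscid limit matches the paper's: the exponent bookkeeping ($s=\tfrac92$, $\tfrac{2}{r}+\tfrac{3}{s}=\tfrac43$, $\tfrac{5}{3r}+\tfrac{2}{s}=1$) is correct, the Fatou property is Lemma \ref{l:Fatou}, the upgrade from weak to Leray--Hopf solutions via Lemma \ref{l:Interpolation} and the energy equality of \cite[Theorem 6.1]{CCFS08} is exactly the ingredient the paper uses to keep the Baire strata closed inside $\mathcal{N}_\nu$, and the passage to an Euler solution via the compact embedding of $B_{3,\infty}^{1/3}(K)$ (Corollary \ref{c:Besov}) plus Aubin--Lions and Lemma \ref{l:Fatou} is the paper's argument.

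The implication \eqref{i:Besov local existence}$\Rightarrow$\eqref{i:Besov existence in L2sigma}, however, has a genuine gap. After Baire category and Lemma \ref{l:GKL23} you correctly arrive at: every $\u_0\in\bar{B}_Z(0,\epsilon)$ has $\u\in\mathcal{N}_\nu(\u_0)$ with $\norm{\u}_{L^3(0,T_0;\dot{B}_{3,\infty}^{1/3})}\leq M$. But your proposed promotion to a global bound by scaling ``into the ball'' and letting $\lambda\to\infty$ fails quantitatively. Writing $\tilde{\u}_0=\lambda\u_0(\lambda\cdot)$, one has $\|\tilde{\u}_0\|_{L^2}^2=\lambda^{-1}\norm{\u_0}_{L^2}^2$, while the Baire-category output is the \emph{constant} bound $M$ on the whole ball, not a bound proportional to $\|\tilde{\u}_0\|_{L^2}^2$. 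Scaling the solution back gives $\norm{\u_\lambda}_{L^3(0,\lambda^2T_0;\dot{B}_{3,\infty}^{1/3})}^3=\lambda\,\norm{\tilde{\u}}_{L^3(0,T_0;\dot{B}_{3,\infty}^{1/3})}^3\leq\lambda M^3\to\infty$: precisely because the target inequality is scale-invariant, pushing the datum deeper into the ball buys nothing, and the bounds blow up rather than stabilise, so there is no weak$^*$ limit to take. The correct (and the paper's) route is to go global \emph{in time} rather than by scaling: since $\norm{\u(T)}_{L^2}\leq\norm{\u_0}_{L^2}\leq\epsilon$ and the $Z$- and $L^2$-norms are equivalent on the closed subspace $Z$, the solution at time $T$ lies again in $\bar{B}_Z(0,\epsilon)$, so one restarts, glues the two Leray--Hopf solutions at $t=T$ (as in \cite[Lemma 2.4]{Gal00}, using weak--strong-type matching), and iterates. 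Even this does not by itself give $L^3(0,\infty;\dot{B}_{3,\infty}^{1/3})$ --- the sum of $M^3$ over infinitely many time blocks diverges --- so one must also invoke Remark \ref{r:Besov integrability remark} (every Leray--Hopf solution eventually lies in $L^3(T,\infty;\dot{B}_{3,\infty}^{1/3})$), which terminates the iteration after finitely many steps. This establishes \eqref{i:Besov existence in L2sigma} on a ball, and only then does one apply the already-proved implication \eqref{i:Besov existence in L2sigma}$\Rightarrow$\eqref{i:Besov a priori estimate}, whose scaling step places the datum exactly on the sphere of radius $\epsilon$ (a fixed $\lambda$ per datum) to produce the homogeneous bound $C\max\{\norm{\u_0}_{L^2}^2,\norm{\u_0}_Z^2\}$.
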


Note that claim \eqref{i:Besov a priori estimate} could, \emph{a priori}, be disproved by considering the Euler equations directly. A natural intermediate problem would be to determine the set $\{\u_0 \in L^2_\sigma: \exists \u \in L^\infty(0,\infty;L^2_\sigma) \cap L^3(0,\infty;\dot{B}_{3,\infty}^{1/3}) \text{ solving \eqref{e:NS1}--\eqref{e:NS2} with } \nu=0\}$, and by interpolation (see Lemma \ref{l:Interpolation}) one could consider solvability in $L^\infty(0,\infty;L^2_\sigma) \cap L^r(0,\infty;L^s)$ for $s \in (2,\frac{9}{2})$ and $\frac{5}{3r}+\frac{2}{s}=1$ instead. Similar problems also seem natural on $\T^3$, in bounded domains, in 2D and in other models such as MHD or SQG. Note that $\dot{B}_{3,\infty}^{1/3}(\R^2)$ is $6$-homogeneous, and as such, $L^3(0,\infty;\dot{B}_{3,\infty}^{1/3}(\R^2))$ is again an Onsager critical space (see~\cite{Shv10}) and satisfies the compatibility condition $2/r+2/s=1$ corresponding to \eqref{e:Euler scaling}; see Theorem \ref{t:Euler}.



\subsection{Time-weighted LPS classes and necessary and sufficient conditions for global regularity} \label{ss:Time-weighted LPS classes and necessary and sufficient conditions for global regularity}
A potential way to gain access to LPS classes in intervals $(0,T)$ is to introduce an increasing time weight; we denote
\[\norm{\vv}_{L^r_g(0,T;L^s)} \defeq \norm{\norm{\vv(t)}_{L^s}}_{L^r((0,T),g(t) \, dt)}\]
whenever $r,s \in [1,\infty]$ and $g \in L^1_{\tp{loc}}(0,\infty)$. Given LPS exponents $r,s$ with $s \in [3,\infty)$ it remains open whether all Leray-Hopf solutions belong to some class $L^r_g(0,\infty;L^s)$. We record several claims that are equivalent to a positive answer; each of them is a sufficient condition for global regularity for Schwartz data~\cite[Theorem 1.20]{Tao13}. 

\begin{theorem} \label{t:Main}
Let $\frac{2}{r}+\frac{3}{s} = 1$ with $s \in [3,\infty)$. The following claims are equivalent:

\begin{enumerate}[(i)]
\item \label{i:Time weight} There exist $\nu > 0$ and an increasing function $g \colon (0,\infty) \to (0,\infty)$ such that $\u^\nu \in \NN_\nu(\u_0) \cap \cup_{T > 0} L^r_g(0,T;L^s)$ exists for a non-meagre set of data $\u_0 \in L^2_\sigma$.

\item \label{i:L2 data} For every $\nu > 0$ and $\u_0 \in L^2_\sigma$ there exists $\u^\nu \in \mathcal{N}_\nu(\u_0) \cap \cap_{\tau > 0} L^r(\tau,\infty;L^s)$.

\item 
\label{i:A priori estimate for L2 data} There exists $G = G_{r,s} \in C[0,\infty)$ with $G(0) = 0$ and the following property: for every $\nu > 0$ and $\u_0 \in L^2_\sigma$ there exists $\u^\nu \in \mathcal{N}_\nu(\u_0)$ with
\begin{equation} \label{e:Inequality for G}
\norm{\u^\nu}_{L^r(\tau,\infty;L^s)} \leq \nu^{1 - \frac{1}{r}} G \left( \nu^{-\frac{5}{4}} \frac{\norm{\u_0}_{L^2}}{\tau^{\frac{1}{4}}} \right) \qquad \text{for all } \tau > 0.
\end{equation}

\item \label{i:L2 cap L3 data} For every $\nu > 0$ and $\u_0 \in L^2_\sigma \cap L^3$ there exists $\u^\nu \in \mathcal{N}_\nu(\u_0) \cap L^r(0,\infty;L^s)$.

\item \label{i:H1 mild} For every $\nu > 0$, $\u_0 \in H^1_\sigma$ and $T > 0$ there exists an $H^1$ mild solution $\u^\nu \in L^\infty(0,T;H^1_\sigma) \cap L^2(0,T;H^2)$ of \eqref{e:NS1}--\eqref{e:NS2} with initial datum $\u_0$.
\end{enumerate}
\end{theorem}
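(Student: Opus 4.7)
The plan is to close the loops (i) $\Leftrightarrow$ (ii) $\Leftrightarrow$ (iii) via Baire category and Navier--Stokes scaling (in the spirit of~\cite{GKL23}), and then to connect (iv) and (v) through Kato's local $L^3$ theory, Serrin regularity, and weak-strong uniqueness.

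\emph{Easy directions.} The implication (iii) $\Rightarrow$ (ii) is immediate from the finiteness of $G$. For (iii) $\Rightarrow$ (i), I fix $\nu > 0$ and $R > 0$ and define the increasing positive step-function $g$ by $g(t) = 2^{-k}/(1 + G(R \cdot 2^{(k+1)/4})^r)$ for $t \in [2^{-k-1}, 2^{-k})$ (and $g(t) = g(1/2)$ for $t \geq 1/2$): a dyadic split of $\int_0^1 g(t) \norm{\u(t)}_{L^s}^r\,dt$ combined with \eqref{e:Inequality for G} applied at $\tau = 2^{-k-1}$ produces a geometrically convergent series, so that $\norm{\u}_{L^r_g(0,1;L^s)} \leq C$ for every $\u_0 \in B_R(0) \subset L^2_\sigma$. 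This open ball is non-meagre.

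\emph{Core: (i) $\Rightarrow$ (iii) and (ii) $\Rightarrow$ (iii).} With the $\nu$ and $g$ given by (i), define
\[A_{M,T} = \{\u_0 \in L^2_\sigma : \exists \u \in \NN_\nu(\u_0),\ \norm{\u}_{L^r_g(0,T;L^s)} \leq M\}.\]
Aubin--Lions compactness of Leray--Hopf solutions, together with the Fatou property of $L^\infty(0,\infty;L^2)$, shows $A_{M,T}$ is closed in $L^2_\sigma$. Since $\bigcup_{M,T} A_{M,T}$ contains the non-meagre set furnished by (i), Baire's theorem gives some $A_{M,T}$ with non-empty interior, hence a ball $B_R(\u_*) \subset A_{M,T}$. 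Lemma \ref{l:GKL23} (applicable because both $L^2_\sigma$ and $L^r_g(0,T;L^s)$ are isometrically translation invariant in $x$ and have the requisite Fatou property) then shifts $\u_*$ to $0$, giving a uniform bound $\norm{\u}_{L^r_g(0,T;L^s)} \leq 2M$ on $B_R(0)$. Two scaling symmetries close the argument: the viscosity symmetry $\u^\nu(x,t) = \nu^{-1}\u^1(x, t/\nu)$ reduces any $\nu > 0$ to $\nu = 1$, and the space-time symmetry $\u_\lambda(x,t) = \lambda \u(\lambda x, \lambda^2 t)$ leaves the LPS norm $L^r(0,\infty;L^s)$ invariant and rescales any $\u_0 \in L^2_\sigma$ into $B_R(0)$. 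Tracking the composite rescaling on $\tau$ and $\norm{\u_0}_{L^2}$ produces the factor $\nu^{1-1/r}$ and the argument $\nu^{-5/4}\norm{\u_0}_{L^2}/\tau^{1/4}$ of $G$, with $G(0)=0$ reflecting the small-data (Kato) regime. The parallel (ii) $\Rightarrow$ (iii) argument uses the unweighted closed sets $K_n = \{\u_0 : \exists \u \in \NN_\nu(\u_0),\ \norm{\u}_{L^r(1,\infty;L^s)} \leq n\}$: Baire + Lemma \ref{l:GKL23} + scaling produces a uniform bound on the scale-invariant function $F_0(\mu)$ for $\mu \leq R$, and the remaining range $\mu > R$ is covered by iteration using Serrin regularity (which (ii) forces on the solution) together with the energy inequality.

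\emph{Connecting (iv) and (v).} (ii) $\Rightarrow$ (iv): for $\u_0 \in L^2_\sigma \cap L^3$, Kato's local theory provides a unique mild solution $\u^K \in L^r(0,T^*;L^s) \cap C([0,T^*);L^3)$; weak-strong uniqueness identifies $\u^K$ with the Leray--Hopf solution from (ii) on $[0,T^*)$, and splicing with the $L^r(T^*/2,\infty;L^s)$ control yields $\u \in L^r(0,\infty;L^s)$. (iv) $\Rightarrow$ (v): $H^1_\sigma \hookrightarrow L^3$, so (iv) gives a Leray--Hopf solution in the LPS class; Serrin regularity gives smoothness on $(0,\infty) \times \R^3$, and the standard $H^1$-energy propagation delivers $L^\infty(0,T;H^1) \cap L^2(0,T;H^2)$. (v) $\Rightarrow$ (ii): for $\u_0 \in L^2_\sigma$ and any Leray--Hopf solution $\u$ one has $\u(\tau) \in H^1_\sigma$ for a.e.\ $\tau > 0$; applying (v) at time $\tau$ produces a smooth mild solution matching $\u$ by weak-strong uniqueness, and standard interpolation plus decay estimates for smooth global solutions give $\u \in L^r(\tau,\infty;L^s)$. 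The main obstacle is the core Baire + Lemma \ref{l:GKL23} + scaling argument: one must track how the time weight $g$ and the scale-invariant parameter $\nu^{-5/4}\norm{\u_0}_{L^2}/\tau^{1/4}$ transform under the composite viscosity/space-time rescaling so that the precise exponents in \eqref{e:Inequality for G} emerge, and for (ii) $\Rightarrow$ (iii) one additionally needs to extend the scale-invariant bound from $\mu \leq R$ to all $\mu \in (0,\infty)$ via Serrin regularity combined with energy-dissipation iteration.
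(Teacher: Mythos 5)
Your overall architecture (closing (i) $\Leftrightarrow$ (ii) $\Leftrightarrow$ (iii) by Baire category, Lemma \ref{l:GKL23} and scaling, and connecting \eqref{i:L2 cap L3 data} and \eqref{i:H1 mild} through local well-posedness, Serrin regularity and weak--strong uniqueness) agrees with the paper in several places: your (iii) $\Rightarrow$ (i) weight construction and your (ii) $\Rightarrow$ (iv) $\Rightarrow$ (v) are essentially the paper's arguments. However, there is a genuine gap in the step you yourself flag as the main obstacle: producing the quantitative estimate \eqref{e:Inequality for G} in the regime where $\mu = \nu^{-5/4}\norm{\u_0}_{L^2}/\tau^{1/4}$ is large. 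The Baire--recentering--scaling machinery only yields $G(\mu)\le M$ for $\mu\le\epsilon$: the ball $\bar{B}(0,\epsilon)\subset L^2_\sigma$ produced by Baire category has a \emph{fixed} radius, and rescaling a datum with $\norm{\u_0}_{L^2}=\delta>\epsilon$ into that ball via $\tilde{\u}_0=\lambda\u_0(\lambda\cdot)$, $\lambda=(\delta/\epsilon)^2$, converts the bound on $\norm{\tilde{\u}}_{L^r(1,\infty;L^s)}$ into a bound on $\norm{\u}_{L^r(\tau,\infty;L^s)}$ only for $\tau\ge\lambda^2=(\delta/\epsilon)^4$, i.e.\ again only for $\mu\le\epsilon$. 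Your proposal to cover $\mu>R$ ``by iteration using Serrin regularity together with the energy inequality'' cannot work as stated: the energy inequality controls interpolation norms only on the line $\frac{2}{a}+\frac{3}{b}=\frac{3}{2}$, not on the critical LPS line $\frac{2}{r}+\frac{3}{s}=1$, and qualitative global smoothness gives no bound that is uniform over data of a fixed large $L^2$ (or $H^1$) norm. Since $H^1$ is subcritical, large $H^1$ data cannot be rescaled into a small ball either, so a second Baire argument at the $H^1$ level does not close the gap.

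The paper fills precisely this hole with Tao's compactness-based \emph{a priori} estimate (Theorem \ref{t:Tao}, i.e.\ \cite[Theorem 1.20(vi)]{Tao13}): assuming \eqref{i:H1 mild}, every smooth $H^1$ mild solution obeys $\norm{\u}_{L^\infty(0,T;H^1)}\le F(\norm{\u_0}_{H^1})$, and this is the sole source of quantitative control for large data. The paper's route into the \emph{a priori} estimate is therefore \eqref{i:H1 mild} $\Rightarrow$ \eqref{i:A priori estimate for L2 data}: using the energy inequality one selects $t'\in(\tau/2,\tau)$ with $\norm{\u(t')}_{H^1}\le(1+(\nu\tau)^{-1/2})\norm{\u_0}_{L^2}$, applies Tao's estimate forward from $t'$ to obtain $L^\infty(\tau,\infty;L^3)$ control, obtains $L^2(\tau,\infty;\tp{BMO})$ control by combining $H^2$ bounds on a finite window with the Fujita--Kato small-$\dot{H}^{1/2}$ regime for the tail, and interpolates; the scaling in $\nu$, $\tau$ and $\norm{\u_0}_{L^2}$ is only performed afterwards. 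Your cycle can be repaired by replacing your direct (i)/(ii) $\Rightarrow$ (iii) with the chain (i) $\Rightarrow$ (ii) $\Rightarrow$ (iv) $\Rightarrow$ (v) $\Rightarrow$ (iii), but the final implication genuinely requires Theorem \ref{t:Tao} and is not obtainable by soft Baire/scaling/energy arguments. A secondary point: when recentering via Lemma \ref{l:GKL23} the data converge only weakly, so Proposition \ref{p:Weak* stability} alone does not guarantee that the limit is a Leray--Hopf solution; the paper invokes Theorem \ref{t:Galdi19} for this, and your argument needs the same input.
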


Theorem \ref{t:Main} complements~\cite[Figure 1]{Tao13} which contains necessary and sufficient conditions for \eqref{i:H1 mild}. In fact, the proof of the direction \eqref{i:H1 mild} $\Rightarrow$ \eqref{i:A priori estimate for L2 data} leans heavily on Tao's \emph{a priori} estimate for $H^1_\sigma$ data (formulated in Theorem \ref{t:Tao} of this paper). Note that if \eqref{i:Time weight}--\eqref{i:H1 mild} hold, then the Leray-Hopf solution given in \eqref{i:L2 data} satisfies the energy equality and is smooth in $\R^3 \times (0,\infty)$.


It is unclear to the author whether $\cap_{\tau > 0} L^r(\tau,\infty;L^s)$ integrability also implies uniqueness of Leray-Hopf solutions; it seems difficult to rule out instantaneous loss of uniqueness at $t = 0$. 
Motivated by this problem, in Theorem \ref{t:Dense to generic} we prove that condition \eqref{i:L2 cap L3 data} of Theorem \ref{t:Main} implies uniqueness for a \emph{Baire generic} $L^2_\sigma$ datum. 



\begin{theorem} \label{t:Dense to generic}
Let $\nu_0 > 0$, and suppose $Z \hookrightarrow L^2_\sigma$ is a Banach space. Each of the following claims holds for a $G_\delta$ set of data $\u_0 \in Z$.
\begin{enumerate}[(i)]
\item Every $\u \in \mathcal{N}_{\nu_0}(\u_0)$ satisfies the energy equality at a.e. $t > 0$. \label{i:Energy equality a.e.}

\item $\u \in \mathcal{N}_{\nu_0}(\u_0)$ is unique. \label{i:Uniqueness}

\item $\liminf_{\nu \to 0} \sup_{\u^\nu \in \NN_\nu(\u_0)} \int_0^T (\norm{\u_0}_{L^2}^2 - \norm{\u^\nu(t)}_{L^2}^2) \, dt = 0$ for every $T > 0$. (As a consequence,
\[\liminf_{\nu \to 0} \sup_{\u^\nu \in \NN_\nu(\u_0)} \nu \int_0^t \norm{\nabla \u^\nu(\tau)}_{L^2}^2 d\tau = 0\]
for every $t > 0$ and there exists a solution $\u \in C_w([0,\infty);L^2_\sigma)$ of the Euler equations with datum $\u_0$ such that $\norm{\u(t)}_{L^2} = \norm{\u_0}_{L^2}$ a.e. $t > 0$ and
\[\liminf_{\nu \to 0} \sup_{\u^\nu \in \NN_\nu(\u_0)} \norm{\u^\nu-\u}_{L^2(0,T;L^2)} = 0 \quad \text{for all } T > 0.)\]
\label{i:No dissipation anomaly}

\end{enumerate}
\end{theorem}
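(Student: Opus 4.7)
The plan is to express, for each of claims~\eqref{i:Energy equality a.e.}--\eqref{i:No dissipation anomaly}, the good set of data as the zero set of a nonnegative upper semicontinuous functional $F \colon Z \to [0,\infty]$. Once this is done, $\{F=0\} = \bigcap_{n \in \N}\{F<1/n\}$ is a countable intersection of open sets, hence $G_\delta$. The main obstacle, and the engine of every u.s.c.\ claim, is the classical Leray compactness of the set-valued map $\u_0 \mapsto \NN_\nu(\u_0)$: if $\u_{0,j} \to \u_0$ in $Z$ (hence in $L^2_\sigma$) and $\u_j \in \NN_\nu(\u_{0,j})$, then a subsequence converges weakly$^*$ in $L^\infty(0,\infty;L^2_\sigma)$, weakly in $L^2(0,\infty;\dot{H}^1)$, and strongly in $L^2_{\tp{loc}}$ to some $\u \in \NN_\nu(\u_0)$, and after passing to weakly continuous representatives one may arrange $\u_j(t) \rightharpoonup \u(t)$ in $L^2_\sigma$ for every $t \ge 0$.

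For claim~\eqref{i:Energy equality a.e.}, let $D_\u(t) \defeq \norm{\u_0}_{L^2}^2 - \norm{\u(t)}_{L^2}^2 - 2\nu_0 \int_0^t \norm{\nabla \u(\tau)}_{L^2}^2 \, d\tau \ge 0$ be the energy deficit of $\u \in \NN_{\nu_0}(\u_0)$ and set $F_1(\u_0) \defeq \sup_{\u \in \NN_{\nu_0}(\u_0)} \int_0^\infty e^{-t} D_\u(t)\, dt$. Then $F_1(\u_0)=0$ is equivalent to the a.e.\ energy equality for every $\u \in \NN_{\nu_0}(\u_0)$. For upper semicontinuity, take $\u_{0,j} \to \u_0$ and $\u_j \in \NN_{\nu_0}(\u_{0,j})$ nearly attaining $F_1(\u_{0,j})$. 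The compactness fact produces $\u \in \NN_{\nu_0}(\u_0)$ with $\norm{\u_{0,j}}_{L^2}^2 \to \norm{\u_0}_{L^2}^2$, $\liminf_j \norm{\u_j(t)}_{L^2}^2 \ge \norm{\u(t)}_{L^2}^2$, and $\liminf_j \int_0^t \norm{\nabla \u_j(\tau)}_{L^2}^2\, d\tau \ge \int_0^t \norm{\nabla \u(\tau)}_{L^2}^2\, d\tau$, whence $\limsup_j D_{\u_j}(t) \le D_\u(t)$ for every $t$. Reverse Fatou with the dominating bound $D_{\u_j}(t) \le \sup_j \norm{\u_{0,j}}_{L^2}^2$ then yields $\limsup_j \int e^{-t}D_{\u_j}\, dt \le \int e^{-t}D_\u\, dt \le F_1(\u_0)$.

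For claim~\eqref{i:Uniqueness}, choose a countable dense family $\{\varphi_k\}_{k \in \N} \subset L^1(0,\infty; L^2_\sigma)$ and set
\[F_{2,k}(\u_0) \defeq \sup_{\u,\tilde\u \in \NN_{\nu_0}(\u_0)} \abs{\int_0^\infty \langle \varphi_k(t), \u(t)-\tilde\u(t)\rangle_{L^2}\, dt}.\]
Since $L^\infty(0,\infty; L^2_\sigma) = (L^1(0,\infty; L^2_\sigma))^*$ (using that $L^2_\sigma$ is Hilbert, hence has RNP) and Leray-Hopf solutions are weakly continuous in time, $F_{2,k}(\u_0) = 0$ for every $k$ is equivalent to uniqueness in $\NN_{\nu_0}(\u_0)$. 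Each $F_{2,k}$ is u.s.c.\ because the pairing is weakly$^*$ continuous, so the compactness fact applied to two solution sequences passes it to the limits; the good set is the countable intersection $\bigcap_k\{F_{2,k}=0\}$. For claim~\eqref{i:No dissipation anomaly}, let $F_{3,T,\nu}(\u_0) \defeq \sup_{\u^\nu \in \NN_\nu(\u_0)} \int_0^T (\norm{\u_0}_{L^2}^2 - \norm{\u^\nu(t)}_{L^2}^2)\, dt$, which is u.s.c.\ by the argument for $F_1$. Writing
\[\{\u_0: \liminf_{\nu \to 0} F_{3,T,\nu}(\u_0) = 0\} = \bigcap_{m,n \in \N}\bigcup_{\nu \in (0,1/m)}\{F_{3,T,\nu} < 1/n\}\]
exhibits this set as $G_\delta$, and intersecting over $T \in \N$ gives the main statement of~\eqref{i:No dissipation anomaly}. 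The consequences are then routine: since $t \mapsto \norm{\u^\nu(t)}_{L^2}^2$ is non-increasing, the integrated deficit controls the pointwise deficit at every $t>0$; the energy inequality then yields the dissipation bound; and a diagonal extraction along $\nu_j \to 0$ produces a weak Euler solution $\u$ with $\norm{\u(t)}_{L^2} = \norm{\u_0}_{L^2}$ a.e., the convergence of $L^2$ norms upgrading the weak subsequential limit to a strong $L^2$ limit at a.e.\ $t$.
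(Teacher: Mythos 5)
Your proposal is essentially the paper's proof in different clothing: both reduce each claim to writing the good set as a countable intersection of sets that are open because of the weak$^*$ compactness/stability of $\u_0 \mapsto \NN_\nu(\u_0)$ (Proposition \ref{p:Weak* stability}); you package this as upper semicontinuity of functionals $F$ and write $\{F=0\}=\bigcap_n\{F<1/n\}$, while the paper writes the bad set as $\bigcup_{M,N} Y_{MN}$ with $Y_{MN}$ closed. Your countable dense family in $L^1(0,\infty;L^2_\sigma)$ for claim \eqref{i:Uniqueness} plays exactly the role of the paper's metric $d_M$ on bounded balls in the weak topology, and your exponentially weighted energy deficit for claim \eqref{i:Energy equality a.e.} matches the paper's averaged functional $\Aint_0^N \mathcal{G}(t)\,dt$.

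One step does not hold as you state it. In the consequences of claim \eqref{i:No dissipation anomaly} you assert that $t \mapsto \norm{\u^\nu(t)}_{L^2}^2$ is non-increasing and use this to pass from the integrated deficit to the pointwise deficit. For Leray--Hopf solutions as defined here (energy inequality \eqref{e:Energy inequality} only from time $0$, not the strong energy inequality), monotonicity of the kinetic energy is not known, so this implication is unjustified. The fix, which is what the paper does, is to apply monotonicity to the right quantity: the energy inequality gives
\begin{equation*}
\nu \int_0^t \norm{\nabla \u^\nu(\tau)}_{L^2}^2\, d\tau \leq \tfrac{1}{2}\bigl(\norm{\u_0}_{L^2}^2 - \norm{\u^\nu(t)}_{L^2}^2\bigr),
\end{equation*}
and integrating this in $t$ over $(0,T)$ bounds the $t$-integral of the cumulative dissipation by the integrated deficit; since the cumulative dissipation \emph{is} non-decreasing in $t$, its value at any fixed $t_0 < T$ is controlled by $(T-t_0)^{-1}$ times that integral, uniformly over $\u^\nu \in \NN_\nu(\u_0)$. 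With that substitution the rest of your argument (diagonal extraction, strong $L^2_tL^2_x$ convergence from weak convergence plus convergence of norms, passage to a weak Euler solution) goes through as in the paper.
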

As a consequence of Theorem \ref{t:Dense to generic}, if global regularity holds in 3D Navier-Stokes equations, then uniqueness and a.e. energy equality must also hold for a Baire generic $L^2_\sigma$ datum (see Corollary \ref{c:Millennium (A) corollary}). In Corollary \ref{c:Millennium (C) corollary} we present a variant under external forcing. Global regularity in 3D Euler equations, in turn, would imply that \emph{anomalous energy dissipation} ($\liminf_{\nu \to 0} \nu \int_0^T \norm{\nabla \u^\nu(t)}_{L^2}^2 dt > 0$) fails for a Baire generic $L^2_\sigma$ datum (see Corollary \ref{c:Euler}). Theorem \ref{t:Dense to generic} also holds on the torus $\T^3$.

In \textsection \ref{s:Preliminaries} we recall background material on homogeneous Sobolev and Besov spaces and the Navier-Stokes equations. The proofs of Theorems \ref{t:Relaxed}--\ref{t:Dense to generic} and Corollary \ref{c:Relaxed} are presented in \textsection \ref{s:The proof of generic non-solvability theorem}--\ref{s:The proof of dense to generic theorem}, and applications of Theorem \ref{t:Dense to generic} to the global regularity problem are presented in \textsection \ref{s:Necessary conditions for global regularity}.

\section{Preliminaries} \label{s:Preliminaries}

\subsection{Homogeneous Sobolev and Besov spaces}
We very briefly recall relevant facts about homogeneous Sobolev and Besov spaces from~\cite{BCD11}. When $d \in \N$ and $s \in \R$, the homogeneous Sobolev space $\dot{H}^s(\R^d)$ consists of tempered distributions whose Fourier transform belongs to $L^1_{\tp{loc}}(\R^d)$ and satisfies
\[\norm{\vv}_{\dot{H}^s}^2 \defeq \int_{\R^d} \abs{\xi}^{2s} \abs{\hat{\vv}(\xi)}^2 d\xi < \infty.\]
Recall that $\dot{H}^s(\R^d)$ is a Hilbert space if and only if $s < \frac{d}{2}$~\cite[Proposition 1.34]{BCD11}, $\dot{H}^{1/2}(\R^3)$ embeds into $L^3(\R^3)$ (see~\cite[Theorem 1.38]{BCD11}) and $\dot{H}^{3/2}(\R^3) \cap L^1_{\tp{loc}}(\R^3)$ is contained in $\tp{BMO}(\R^3)$ with $\norm{\vv}_{\tp{BMO}} \leq C \norm{\vv}_{\dot{H}^{3/2}}$ (see~\cite[Theorem 1.48]{BCD11}).

We then define Besov spaces. We denote $A(r_1,r_2) \defeq \{x \in \R^d: r_1 < \abs{x} < r_2\}$ for $0 < r_1 < r_2$. Given $\alpha \in (1,\frac{4}{3})$, choose a radial function $\theta \in C_c^\infty(A(\frac{3}{4},\frac{8}{3}))$ with $\theta|_{A(1/\alpha,2\alpha)} = 1$. Now
\[\varphi \defeq \frac{\theta}{\sum_{j \in Z} \theta(2^{-j} \cdot))} \in C_c^\infty(A(3/4,8/3)), \quad \chi \defeq 1 - \sum_{j=0}^\infty \varphi(2^{-j} \cdot) \in C_c^\infty(B(0,4/3))\]
are well-defined (with locally finite sums)~\cite[p. 60]{BCD11}; see~\cite[Proposition 2.10]{BCD11} for further properties of $\varphi$ and $\chi$. We denote
\[\dot{\Delta}_j g \defeq \varphi(2^{-j} D) g \defeq \mathcal{F}^{-1}(\varphi(2^{-j} \cdot)) * g\]
for all $g \in \mathcal{S}'(\R^d)$ and $j \in \Z$, where $\mathcal{F}$ is the Fourier transform. We also denote $\dot{S}_j g \defeq \chi(2^{-j} D) g$.

Let $d \in \N$, $\beta \in \R$ and $p,q \in [1,\infty]$, and fix $\vv \in \mathcal{S}'(\R^d)$. We define the homogeneous Besov seminorm
\[\norm{\vv}_{\dot{B}_{p,q}^\beta} \defeq \norm{ \left( 2^{j\beta} \norm{\dot{\Delta}_j \vv}_{L^p} \right)_{j \in \Z}}_{\ell^q(\Z)}.\]
and the inhomogeneous Besov norm
\[\norm{\vv}_{B_{p,q}^\beta} \defeq \norm{ \left( 2^{j\beta} \norm{\dot{\Delta}_j \vv}_{L^p} \right)_{j \in \N}}_{\ell^q(\N)} + \norm{\dot{S}_0 \vv}_{L^p}.\]
Following~\cite{BCD11} we denote by $\S_h'(\R^d)$ the set of tempered distributions $\vv$ such that $\lim_{\lambda \to \infty} \norm{\theta(\lambda D) \vv}_{L^\infty} = 0$ for all $\theta \in \mathcal{D}(\R^d)$. In particular, $\S_h'(\R^d)$ contains all compactly supported distributions and excludes all non-zero polynomials~\cite[pp. 22-23]{BCD11}.

\begin{definition}
Let $d \in \N$, $\beta \in \R$ and $p,q \in [1,\infty]$. The homogeneous Besov space $\dot{B}_{p,q}^\beta(\R^d)$ consists of $\vv \in \S'_h(\R^d)$ such that $\norm{\vv}_{\dot{B}_{p,q}^\beta} < \infty$, and the inhomogeneous Besov space $B_{p,q}^\beta(\R^d)$ consists of $\vv \in \S'(\R^d)$ such that $\norm{\vv}_{B_{p,q}^\beta} < \infty$.

If $K \subset \R^d$ is compact, we denote by $B_{p,q}^\beta(K)$ and $\dot{B}_{p,q}^\beta(K)$ the members of $B_{p,q}^\beta(\R^d)$ and $\dot{B}_{p,q}^\beta(\R^d)$, respectively, supported on $K$.
\end{definition}

Homogeneous Besov spaces $\dot{B}_{p,r}^\beta(\R^d)$ are normed spaces~\cite[Proposition 2.16]{BCD11}, and they are complete if either $\beta < \frac{d}{p}$ or $r = 1$ and $\beta \leq \frac{d}{p}$~\cite[Theorem 2.25]{BCD11}. By using Young's inequality on $\|\dot{S}_0 \vv\|_{L^p}$ we get $\dot{B}_{p,q}^\beta(\R^d) \cap L^r(\R^d) \hookrightarrow B_{p,q}^\beta(\R^d)$ whenever $1 \leq r \leq p$.

We have
\begin{equation} \label{e:Homogeneity of Besov spaces}
\norm{\vv(\lambda^{-1} \cdot)}_{\dot{B}_{p,q}^\beta(\R^d)} \approx \lambda^{\frac{d}{p}-\beta} \norm{\vv}_{\dot{B}_{p,q}^\beta(\R^d)}
\end{equation}
for all $\vv \in \dot{B}_{p,q}^\beta(\R^d)$ and $\lambda > 0$~\cite[Remark 2.19]{BCD11}. 
We also have the embeddings
\begin{align}
& p_1 \leq p_2, \; q_1 \leq q_2 \quad \Longrightarrow \quad \dot{B}_{p_1,q_1}^\beta(\R^d) \hookrightarrow \dot{B}_{p_2,q_2}^{\beta-\frac{d}{p_1}+\frac{d}{p_2}}(\R^d), \label{e:BCD11  Proposition 2.20} \\
& p \leq q \quad \Longrightarrow \quad \dot{B}_{p,1}^{\frac{d}{p} - \frac{d}{q}} \hookrightarrow L^q \hookrightarrow \dot{B}_{q,\max\{q,2\}}^0 \label{e:Lp Besov embeddings}
\end{align}
\cite[Propositions 2.20 and 2.39, Theorems 2.40 and 2.41]{BCD11} and the interpolation result
\begin{equation} \label{e:Interpolation of Besov spaces}
\norm{\vv}_{\dot{B}_{p,1}^{\theta \beta_1 + (1-\theta) \beta_2}(\R^d)} \leq \frac{C}{\beta_2-\beta_1} \left( \frac{1}{\theta} + \frac{1}{1-\theta} \right) \norm{\vv}_{\dot{B}_{p,\infty}^{\beta_1}(\R^d)}^\theta \norm{\vv}_{\dot{B}_{p,\infty}^{\beta_2}(\R^d)}^{1-\theta}
\end{equation}
whenever $\beta_1 < \beta_2$ and $\theta \in (0,1)$~\cite[Proposition 2.22]{BCD11}. A short computation gives $\norm{\vv(\cdot-x_0)}_{\dot{B}_{p,q}^\beta(\R^d)} = \norm{\vv}_{\dot{B}_{p,q}^\beta(\R^d)}$ for all $\vv \in \dot{B}_{p,q}^\beta(\R^d)$ and $x_0 \in \R^d$.

Suppose now $1 \leq p \leq \infty$ and $-\frac{d}{p'} \leq \beta < \frac{d}{p}$. If $\varphi \in C_c^\infty(\R^d)$ and $K \supset \tp{supp}(\varphi)$ is compact, then $g \mapsto \varphi g \colon \dot{B}_{p,\infty}^\beta(\R^d) \to B_{p,\infty}^\beta(K)$ is a continuous embedding~\cite[Corollary 2.1.1]{DM15}. Whenever $-\infty < \beta' < \beta < \infty$ and $K \subset \R^d$ is compact, $B_{p,\infty}^\beta(K)$ embeds compactly into $B_{p,1}^{\beta'}(K)$~\cite[Corollary 2.96]{BCD11}. Combining with \eqref{e:Lp Besov embeddings} and multiplying by $\varphi \in C_c^\infty(\R^d)$, $\varphi|_K = 1$, we have  the following (recall that $\beta_p \defeq \frac{11-3p}{2p}$):

\begin{corollary} \label{c:Besov}
Let $d = 3$ and $p \in [\frac{5}{3},\frac{11}{3})$, and suppose $K \subset \R^3$ is compact. Then $B_{p,\infty}^{\beta_p}(K)$ embeds compactly into $L^q(K)$ for all $q \in [1,\frac{6p}{3p-5})$.
\end{corollary}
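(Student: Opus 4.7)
The plan is to factor the claimed compact embedding through an intermediate inhomogeneous Besov space of slightly lower smoothness, chaining
\[
B_{p,\infty}^{\beta_p}(K) \hookrightarrow B_{p,1}^{\beta'}(K) \hookrightarrow L^q(K)
\]
for an appropriate $\beta' \in (0,\beta_p)$, where the first arrow is the compact embedding recalled in the paragraph preceding the corollary (i.e.\ the Rellich-type fact from \cite[Corollary 2.96]{BCD11}) and the second is a continuous Sobolev--Besov embedding. Since the composition of a compact map with a continuous one is compact, this will suffice.

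First I would perform a harmless reduction to the case $q \geq p$: since $K$ is compact, H\"older's inequality gives $L^{q_0}(K) \hookrightarrow L^q(K)$ continuously whenever $q_0 \geq q$, and the inequality $\frac{6p}{3p-5} > p$ is equivalent to $p < \frac{11}{3}$, which is granted by hypothesis. The key algebraic computation is then $\frac{3}{p} - \beta_p = \frac{3p-5}{2p}$, which identifies $\frac{6p}{3p-5} = 3/\bigl(\frac{3}{p} - \beta_p\bigr)$ and shows that the hypothesis $q < \frac{6p}{3p-5}$ is equivalent to $\frac{3}{p} - \frac{3}{q} < \beta_p$. I would then pick any $\beta' \in \bigl(\tfrac{3}{p} - \tfrac{3}{q},\,\beta_p\bigr)$; the compact inclusion $B_{p,\infty}^{\beta_p}(K) \hookrightarrow B_{p,1}^{\beta'}(K)$ follows directly from the cited result, since $\beta' < \beta_p$.

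The remaining step is the continuous Sobolev--Besov embedding $B_{p,1}^{\beta'}(K) \hookrightarrow L^q(K)$, which is the only genuinely analytical (as opposed to algebraic or bookkeeping) part of the proof. I would verify it by a standard dyadic-block Bernstein estimate: for $v \in B_{p,1}^{\beta'}(K)$, write $v = \dot{S}_0 v + \sum_{j \geq 0} \dot{\Delta}_j v$, apply Bernstein's inequality to get $\|\dot{\Delta}_j v\|_{L^q} \lesssim 2^{j(3/p - 3/q)} \|\dot{\Delta}_j v\|_{L^p}$ for $j \geq 0$, bound $\|\dot{S}_0 v\|_{L^q} \lesssim \|\dot{S}_0 v\|_{L^p}$ via convolution with a Schwartz kernel, and sum using $\beta' > \frac{3}{p} - \frac{3}{q}$ to arrive at $\|v\|_{L^q} \lesssim \|v\|_{B_{p,1}^{\beta'}}$. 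The main obstacle, such as it is, is verifying the scaling identity $\frac{3}{p} - \beta_p = \frac{3p-5}{2p}$ so that the two arrows match up at the correct exponent; once that is secured, everything else is a direct assembly of tools already collected in the preamble.
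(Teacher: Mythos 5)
Your proposal is correct and follows essentially the same route as the paper: factor through the compact embedding $B_{p,\infty}^{\beta_p}(K) \hookrightarrow B_{p,1}^{\beta'}(K)$ from \cite[Corollary 2.96]{BCD11} and then apply the Besov--Lebesgue embedding (the paper cites its displayed embedding $\dot{B}_{p,1}^{d/p-d/q} \hookrightarrow L^q$ where you re-derive it by Bernstein), with the same exponent identity $\tfrac{3}{p}-\beta_p=\tfrac{3p-5}{2p}$ doing the bookkeeping. The reduction to $q \geq p$ via H\"older on the compact set $K$ is a harmless and correct addition.
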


For the proof of Corollary \ref{c:Besov} we will need the following:

\begin{lemma} \label{l:Fatou}
Whenever $p,r,s \in (1,\infty]$ and $\beta \in \R$, the Bochner-Besov space $L^p(0,\infty;\dot{B}_{p,\infty}^\beta(\R^d))$ has the Fatou property with respect to $L^r(0,\infty;L^s(\R^d))$.
\end{lemma}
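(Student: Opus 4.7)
The plan is to reduce to finite Littlewood--Paley truncations $F \subset \Z$ and then pair against smooth, compactly supported test functions, thereby turning the assumed weak$^*$ convergence in $L^r(L^s)$ into a uniform bound on a tractable dual norm.

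For each finite $F \subset \Z$ set
\[
\|\vv\|_{E_F} \defeq \left(\int_0^\infty \max_{k \in F}\bigl(2^{k\beta}\|\dot{\Delta}_k \vv(t)\|_{L^p(\R^d)}\bigr)^p\,dt\right)^{1/p},
\]
with the obvious modification if $p = \infty$. Since $\|\vv\|_{E_F}$ is monotone in $F$ with $\sup_F \|\vv\|_{E_F} = \|\vv\|_{L^p(\dot{B}_{p,\infty}^\beta)}$ by monotone convergence, it suffices to prove $\|\u\|_{E_F} \le C$ for every fixed finite $F$. Now $\|\cdot\|_{E_F}$ is the norm of the mixed Bochner--Lebesgue space $L^p_t(\ell^\infty(F;L^p_x))$, whose value space is finite-dimensional; the Radon--Nikod\'ym property is therefore automatic and $E_F$ is isometrically the dual of $L^{p'}_t(\ell^1(F;L^{p'}_x))$ (identically when $p = \infty$ via $L^\infty = (L^1)^*$). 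Tuples $(\phi_k)_{k\in F}$ with $\phi_k \in C_c^\infty((0,\infty)\times\R^d)$ are dense in this predual.

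Fix such a test tuple of predual norm $\bigl\|\sum_k\|\phi_k(\cdot)\|_{L^{p'}_x}\bigr\|_{L^{p'}_t} \le 1$ and set $g \defeq \sum_{k \in F} 2^{k\beta}\dot{\Delta}_k\phi_k$. Since each $\dot{\Delta}_k$ is convolution in $x$ with a real Schwartz kernel, $g$ is smooth, compactly supported in $t$ and Schwartz in $x$; in particular $g \in L^{r'}(0,\infty;L^{s'})$. By self-adjointness of the Littlewood--Paley projections,
\[
\sum_{k\in F} 2^{k\beta}\int_0^\infty\!\!\int \dot{\Delta}_k\u\cdot\phi_k\,dx\,dt = \int_0^\infty\!\!\int \u\cdot g\,dx\,dt = \lim_{j\to\infty}\int_0^\infty\!\!\int \u_j\cdot g\,dx\,dt,
\]
and applying the same identity to $\u_j$ followed by H\"older's inequality in $(t,x,k)$ bounds each term in the limit by $\|\u_j\|_{E_F} \le \|\u_j\|_{L^p(\dot{B}_{p,\infty}^\beta)} \le C$. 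Taking the supremum over admissible $(\phi_k)$ gives $\|\u\|_{E_F} \le C$, and letting $F \uparrow \Z$ concludes the proof.

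The main technical point is the necessity of the finite truncation: the full space $L^p(\dot{B}_{p,\infty}^\beta)$ is an awkward "dual" because $\dot{B}_{p,\infty}^\beta$ lacks the Radon--Nikod\'ym property, so there is no clean Bochner-type predual to pair against directly. The finite-$F$ truncations sidestep this obstacle since $\ell^\infty(F)$ is finite-dimensional, after which the usual recipe---density of $C_c^\infty$ tuples, self-adjointness of $\dot{\Delta}_k$, and weak$^*$ testing against Schwartz-in-$x$, compactly-supported-in-$t$ functions---is routine.
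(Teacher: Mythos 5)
Your proof is correct and rests on the same basic mechanism as the paper's: compute the norm by pairing against smooth, compactly supported test functions, use self-adjointness of $\dot{\Delta}_k$ (the kernel is real and even since $\varphi$ is radial) to move the projection onto the test function so that weak$^*$ convergence in $L^r(0,\infty;L^s)$ applies, and finish with a monotone limit over frequency truncations. The difference lies in how the $\ell^\infty$ over frequencies is handled. The paper proves lower semicontinuity block by block, i.e.\ inequality \eqref{e:Inequality for dyadic blocks} for each fixed $j$, and then invokes Fatou's lemma for the truncated suprema $\sup_{\abs{j}\le\ell}$; you instead perform the duality jointly over the finite index set $F$, identifying $\norm{\cdot}_{E_F}$ as the dual norm of $L^{p'}_t(\ell^1(F;L^{p'}_x))$ and testing against tuples $(\phi_k)_{k\in F}$ simultaneously. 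This joint pairing is in fact the tighter way to control $\int_0^\infty\sup_{\abs{j}\le\ell}2^{j\beta p}\norm{\dot{\Delta}_j \u(t)}_{L^p}^p\,dt$, since the single-block inequalities by themselves only bound the supremum of the integrals rather than the integral of the supremum; your organization buys a clean interchange of the frequency supremum with the time integral and with the $\liminf$ over the approximating sequence. One justification should be repaired: the value space $\ell^\infty(F;L^p_x)$ is \emph{not} finite-dimensional (only the $\ell^\infty(F)$ factor is), so the Radon--Nikod\'ym property is not ``automatic''. It is also not needed: you only use that the predual unit ball norms your concrete measurable function, i.e.\ the isometric embedding of $L^p_t(Y^*)$ into $(L^{p'}_t(Y))^*$, not surjectivity of the representation; alternatively, for $1<p<\infty$ the space $\ell^\infty(F;L^p_x)$ is reflexive for finite $F$, and for $p=\infty$ your direct identification $L^\infty=(L^1)^*$ applies. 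With that correction the argument is complete.
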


\begin{proof}
Suppose $\vv_k \overset{*}{\rightharpoonup} \vv$ in $L^r(0,\infty;L^s(\R^d))$ and $\sup_{k \in \N} \norm{\vv_k}_{L^p(0,\infty;\dot{B}_{p,q}^\beta(\R^d))} < \infty$. We first show that
\begin{equation} \label{e:Inequality for dyadic blocks}
\int_0^\infty \norm{\dot{\Delta}_j \vv(t)}_{L^p(\R^d)}^p dt \leq \liminf_{k \to \infty} \int_0^\infty \norm{\dot{\Delta}_j \vv_k(t)}_{L^p(\R^d)}^p dt
\end{equation}
for all $j \in \Z$. Let $j \in \Z$ and $\epsilon > 0$. Now there exists $\psi_j \in C_c^\infty(\R^d \times (0,\infty))$ such that $\norm{\psi_j}_{L^{p'}(0,\infty;L^{p'}(\R^d))} \leq 1$ and
\[\int_0^\infty \int_{\R^d} \vv(x,t) \cdot \psi_j(x,t) \, dx \, dt \geq (1-\epsilon) \norm{\dot{\Delta}_j \vv}_{L^p(0,\infty;L^p(\R^d))}.\]
By using the assumption that $\vv_k \overset{*}{\rightharpoonup} \vv$ in $L^r(0,\infty;L^s(\R^d))$ we get
\begin{align*}
(1-\epsilon)^p \int_0^\infty \norm{\dot{\Delta}_j \vv(t)}_{L^p(\R^d)}^p dt
&\leq \lim_{k \to \infty} \left( \int_0^\infty \int_{\R^d} \dot{\Delta}_j \vv_k(x,t) \cdot \psi_j(x,t) \, dx \, dt \right)^p \\
&\leq \liminf_{k \to \infty} \int_0^\infty \norm{\dot{\Delta}_j \vv_k(t)}_{L^p(\R^d)}^p dt,
\end{align*}
so that \eqref{e:Inequality for dyadic blocks} holds.

We then finish the proof of the Lemma via Fatou's lemma and \eqref{e:Inequality for dyadic blocks}:
\begin{align*}
\norm{\vv}_{L^p(0,\infty;\dot{B}_{p,\infty}^\beta)}^p
&= \int_0^\infty \lim_{\ell \to \infty} \sup_{\abs{j} \leq \ell} 2^{j\beta p} \norm{\dot{\Delta}_j \vv(t)}_{L^p}^p dt \\
&\leq \liminf_{\ell \to \infty} \liminf_{k \to \infty} \int_0^T \sup_{\abs{j} \leq \ell} 2^{j\beta p} \norm{\dot{\Delta}_j \vv_k(t)}_{L^p}^p dt \\
&\leq \liminf_{\ell \to \infty} \liminf_{k \to \infty} \int_0^T \sup_{j \in \Z} 2^{j\beta p} \norm{\dot{\Delta}_j \vv_k(t)}_{L^p}^p dt \\
&= \liminf_{k \to \infty} \norm{\vv_k}_{L^p(0,\infty;\dot{B}_{p,\infty}^\beta)}^p,
\end{align*}
as claimed.
\end{proof}

For all $p \in [1,\infty]$ and $\beta \in (0,1)$, the Besov space $\dot{B}_{p,\infty}^\beta(\R^d)$ has the equivalent norm $\sup_{h \neq 0} \abs{h}^{-\beta} \norm{\vv(\cdot+h)-\vv}_{L^p(\R^d)}$~\cite[Theorem 2.36]{BCD11}. As such, $\dot{B}_{p,\infty}^\beta$ measures, loosely speaking, H\"older continuity on average. On the torus, it is customary to use the norm
\[\norm{\vv}_{B_{p,\infty}^\beta(\T^3)} \defeq \norm{\vv}_{L^p(\T^3)} + \sup_{h \neq 0} \frac{\norm{\vv(\cdot+h)-\vv}_{L^p(\T^3)}}{\abs{h}^\beta}\]
for this range of exponents.

Homogeneous Besov spaces with negative smoothness $\beta$ can also be characterised in terms of the heat kernel. In particular, if $r,s \in [1,\infty]$, then
\begin{equation} \label{e:Heat characterisation of Besov spaces}
\norm{\vv}_{\dot{B}_{s,r}^{-2/r}(\R^d)} \approx \norm{e^{t \Delta} \vv}_{L^r(0,\infty;L^s(\R^d))}
\end{equation}
for all $\vv \in \S'_h(\R^d)$~\cite[Theorem 2.34]{BCD11}. We single out the following consequence (which of course also has a more direct proof).
\begin{proposition} \label{c:Incompatible Lp}
If $r,s,p \in [1,\infty]$ satisfy $\frac{2}{r}+\frac{d}{s} < \frac{d}{p}$ and $Z \subset \S'_h(\R^d)$ is a $p$-homogeneous Banach space, then we have $e^{t\Delta} \vv \in L^r(0,\infty;L^s(\R^d))$ only for a meagre set of vector fields $\vv \in Z$.
\end{proposition}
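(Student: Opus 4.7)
The plan is to combine the heat-kernel characterisation \eqref{e:Heat characterisation of Besov spaces} with the Baire category theorem and a scaling argument in the spirit of those discussed after Theorem \ref{t:Relaxed}. By \eqref{e:Heat characterisation of Besov spaces}, the condition $e^{t\Delta}\vv \in L^r(0,\infty; L^s(\R^d))$ is equivalent to $\vv \in \dot{B}^{-2/r}_{s,r}(\R^d)$, with comparable norms. The Besov norm has spatial scaling $\lambda^{d/s+2/r}$ by \eqref{e:Homogeneity of Besov spaces}, whereas $Z$ has scaling $\lambda^{d/p}$; the hypothesis $2/r+d/s<d/p$ is precisely what creates the scaling mismatch that will drive the contradiction.

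Arguing by contradiction, I would suppose that $A \defeq \{\vv \in Z: \vv \in \dot{B}^{-2/r}_{s,r}\}$ is non-meagre in $Z$ and write $A = \bigcup_{M \in \N} A_M$ with $A_M \defeq \{\vv \in Z: \norm{\vv}_{\dot{B}^{-2/r}_{s,r}} \leq M\}$. Each $A_M$ is symmetric, convex, and closed in $Z$: if $\vv_k \to \vv$ in $Z$, then $\vv_k \to \vv$ in $\S'(\R^d)$, and a Fatou-type argument applied dyadic block by dyadic block (analogous to Lemma \ref{l:Fatou}) gives $\norm{\vv}_{\dot{B}^{-2/r}_{s,r}} \leq \liminf_k \norm{\vv_k}_{\dot{B}^{-2/r}_{s,r}} \leq M$. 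The Baire category theorem then produces $M_0 \in \N$, $\vv_0 \in Z$ and $r_0 > 0$ with $\vv_0 + r_0 B_Z \subset A_{M_0}$, where $B_Z$ denotes the open unit ball of $Z$. Symmetry ($A_{M_0} = -A_{M_0}$) and convexity yield $r_0 B_Z \subset A_{2M_0}$, and scalar-homogeneity of both seminorms then upgrades this to the uniform linear bound
\[\norm{\vv}_{\dot{B}^{-2/r}_{s,r}} \leq C_0 \norm{\vv}_Z \qquad \text{for all } \vv \in Z.\]

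The final step exploits the spatial scaling mismatch. For fixed $\vv \in Z \setminus \{0\}$ and arbitrary $\lambda > 0$, set $\vv_\lambda \defeq \vv(\lambda^{-1}\cdot)$. By $p$-homogeneity of $Z$ and \eqref{e:Homogeneity of Besov spaces},
\[\norm{\vv_\lambda}_{\dot{B}^{-2/r}_{s,r}} \approx \lambda^{d/s+2/r}\, \norm{\vv}_{\dot{B}^{-2/r}_{s,r}}, \qquad \norm{\vv_\lambda}_Z \approx \lambda^{d/p}\, \norm{\vv}_Z.\]
Applying the uniform bound to $\vv_\lambda$ and rearranging gives $\norm{\vv}_{\dot{B}^{-2/r}_{s,r}} \lesssim \lambda^{d/p-d/s-2/r}\, \norm{\vv}_Z$ for every $\lambda > 0$. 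Since $d/p - d/s - 2/r > 0$ by hypothesis, sending $\lambda \to 0$ forces $\norm{\vv}_{\dot{B}^{-2/r}_{s,r}} = 0$. On $\S_h'(\R^d)$ the Besov seminorm is in fact a norm (polynomials being excluded), so $\vv = 0$. As $\vv$ was arbitrary this would give $Z = \{0\}$, contradicting the standing assumption that $Z$ is a (nontrivial) Banach space.

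The only non-routine step I anticipate is the closedness of $A_M$ in $Z$, which is handled essentially as in Lemma \ref{l:Fatou}; everything else is direct Baire category plus scaling. An alternative route would be to invoke the abstract framework of~\cite{GKL23} (i.e.\ Lemma \ref{l:GKL23}), but the self-contained Baire argument sketched above is adequate since no translation invariance of $Z$ is needed.
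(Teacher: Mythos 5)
Your proposal is correct and follows essentially the same route as the paper: the paper's one-line proof invokes the open mapping theorem to upgrade non-meagreness of $\{\vv \in Z: \vv \in \dot{B}_{s,r}^{-2/r}\}$ to the norm equivalence $\norm{\vv}_Z \approx \norm{\vv}_Z + \norm{\vv}_{\dot{B}_{s,r}^{-2/r}}$ on $Z$, and then contradicts this with \eqref{e:Homogeneity of Besov spaces}, exactly as in your scaling step. Your explicit Baire decomposition into the closed, convex, symmetric sets $A_M$ (with the Fatou-type lower semicontinuity argument) is just an unpacking of the functional-analytic step the paper delegates to the open mapping theorem.
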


\begin{proof}
If Proposition \ref{c:Incompatible Lp} failed, the Open mapping theorem would imply that $\norm{\vv}_{L^p} \approx \norm{\vv}_{L^p} + \norm{\vv}_{\dot{B}_{s,r}^{-2/r}}$ for all $\vv \in Z$, leading to a contradiction with \eqref{e:Homogeneity of Besov spaces}.
\end{proof}

In the proof of Corollary \ref{c:Relaxed} we will need the integrability properties of those Leray-Hopf solutions which belong to $L^r(0,\infty;\dot{B}_{r,\infty}^{\beta_r})$ (see \eqref{e:Besov exponent}).
\begin{lemma} \label{l:Interpolation}
If $d=3$, $2 \leq r < \frac{11}{3}$, $\theta \in (0,1)$, $\theta \leq \frac{11-3r}{5}$ and $T \in \R_+ \cup \{\infty\}$, then
\[L^\infty(0,T;L^2) \cap L^r(0,T;\dot{B}_{r,\infty}^{\beta_r}) \hookrightarrow L^{\frac{r}{1-\theta}}(0,T;L^{\frac{6r}{3r-5(1-\theta)}}).\]
In particular,
\[L^\infty(0,T;L^2) \cap L^r(0,T;\dot{B}_{r,\infty}^{\beta_r}) \hookrightarrow L^r(0,T;B_{r,\infty}^{\beta_r})\]
for all $r \in [2,\frac{11}{3})$ and $T \in \R_+$.
\end{lemma}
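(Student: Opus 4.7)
The plan is to establish, pointwise in $t$, an interpolation inequality
\[
\norm{\vv(t)}_{L^{\tilde s}} \leq C \norm{\vv(t)}_{L^2}^\theta \norm{\vv(t)}_{\dot B_{r,\infty}^{\beta_r}}^{1-\theta}, \qquad \tilde s = \frac{6r}{3r-5(1-\theta)},
\]
and then apply H\"older in time with exponent $\tilde r = r/(1-\theta)$. The exponents align automatically because $(1-\theta)\tilde r = r$, and $3/\tilde s$ is the convex combination $\theta \cdot 3/2 + (1-\theta)(3r-5)/(2r)$ of the spatial scaling exponents of $L^2$ and $\dot B_{r,\infty}^{\beta_r}$.

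To prove the pointwise estimate, I would first reduce the $L^2$ endpoint to a homogeneous Besov space at Lebesgue exponent $r$ by chaining \eqref{e:Lp Besov embeddings} ($L^2 \hookrightarrow \dot B_{2,2}^0$) with two applications of \eqref{e:BCD11  Proposition 2.20}: first $\dot B_{2,2}^0 \hookrightarrow \dot B_{2,\infty}^0$, then $\dot B_{2,\infty}^0 \hookrightarrow \dot B_{r,\infty}^{3/r-3/2}$ (allowed since $r \geq 2$). Since $3/r-3/2 < \beta_r$, the Besov interpolation inequality \eqref{e:Interpolation of Besov spaces} applied with weight $\theta$ on the lower smoothness endpoint yields
\[
\norm{\vv}_{\dot B_{r,1}^{\gamma_\theta}} \leq C \norm{\vv}_{L^2}^\theta \norm{\vv}_{\dot B_{r,\infty}^{\beta_r}}^{1-\theta}, \qquad \gamma_\theta = \frac{11-3r-5\theta}{2r}.
\]
A further application of \eqref{e:Lp Besov embeddings} embeds $\dot B_{r,1}^{\gamma_\theta} \hookrightarrow L^{\tilde s}$ with $3/r - 3/\tilde s = \gamma_\theta$; the admissibility condition $r \leq \tilde s$ reduces exactly to $\theta \leq (11-3r)/5$, which is also precisely what forces $\gamma_\theta \geq 0$.

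Raising the pointwise estimate to the $\tilde r$-th power, integrating over $(0,T)$, and pulling the $L^\infty_t L^2_x$ factor out of the time integral yields the first embedding. The second embedding then follows from the paper's earlier observation that $\dot B_{r,\infty}^{\beta_r} \cap L^q \hookrightarrow B_{r,\infty}^{\beta_r}$ for $1 \leq q \leq r$ (from Young's inequality applied to $\dot S_0 \vv = \mathcal F^{-1}\chi \ast \vv$, with $\mathcal F^{-1}\chi$ Schwartz): taking $q = 2$, allowed because $r \geq 2$, and integrating the $r$-th power in time controls the extra $L^2$ tail by a $T^{1/r}$-weighted $L^\infty_t L^2_x$ norm, which is finite since $T < \infty$. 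The main obstacle is really just bookkeeping: verifying that the single condition $\theta \leq (11-3r)/5$ plays the dual role of making the regularity index $\gamma_\theta$ non-negative and keeping the target Lebesgue exponent $\tilde s$ at least $r$.
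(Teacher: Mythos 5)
Your argument is correct and follows essentially the same route as the paper: the chain $L^2 \hookrightarrow \dot{B}_{2,2}^0 \hookrightarrow \dot{B}_{r,\infty}^{3/r-3/2}$, the interpolation inequality \eqref{e:Interpolation of Besov spaces} landing in $\dot{B}_{r,1}^{\gamma_\theta}$ with $\gamma_\theta = \frac{11-3r-5\theta}{2r}$, the embedding into $L^{\tilde s}$, and H\"older in time are exactly the paper's steps, including the observation that $\theta \leq \frac{11-3r}{5}$ is the single condition needed. For the second embedding you take a slightly more direct (and equally valid) shortcut: you feed the $L^\infty_t L^2_x$ bound straight into $\dot{B}_{r,\infty}^{\beta_r} \cap L^2 \hookrightarrow B_{r,\infty}^{\beta_r}$, whereas the paper first extracts $L^r(0,T;L^r)$ integrability from the first embedding (choosing $\theta = \frac{11-3r}{5}$ when $r>2$) and then uses $L^r \cap \dot{B}_{r,\infty}^{\beta_r} \hookrightarrow B_{r,\infty}^{\beta_r}$; your version avoids the paper's case split between $r>2$ and $r=2$.
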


\begin{proof}
Let $\vv \in L^\infty(0,T;L^2) \cap L^r(0,T;\dot{B}_{r,\infty}^{\beta_r})$ and $\theta \in (0,1)$. By \eqref{e:BCD11  Proposition 2.20}--\eqref{e:Lp Besov embeddings}, $L^2 \hookrightarrow \dot{B}_{2,2}^0 \hookrightarrow \dot{B}_{r,\infty}^{3(2-r)/(2r)}$, and so \eqref{e:BCD11  Proposition 2.20}--\eqref{e:Interpolation of Besov spaces} yield
\begin{equation} \label{e:Besov interpolation}
\norm{\vv(t)}_{L^{\frac{6r}{3r-5(1-\theta)}}} \lesssim_\theta \norm{\vv(t)}_{\dot{B}_{r,1}^{\frac{3(2-r)}{2r} \theta + \beta_r (1-\theta)}} \lesssim_\theta \norm{\vv(t)}_{L^2}^\theta \norm{\vv(t)}_{\dot{B}_{r,\infty}^{\beta_r}}^{1-\theta}
\end{equation}
for a.e. $t \in (0,T)$; here we use the assumption $\theta \leq \frac{11-3r}{5}$ to get $\frac{3(2-r)\theta}{2r} + \beta_r(1-\theta) \geq 0$, so that \eqref{e:Lp Besov embeddings} is applicable. Now \eqref{e:Besov interpolation} yields
\[\norm{\vv}_{L^{\frac{r}{1-\theta}}(0,T;L^{\frac{6r}{3r-5(1-\theta)}})} \lesssim_\theta \norm{\vv}_{L^\infty(0,\infty;L^2)}^\theta \norm{\vv}_{L^r(0,T;\dot{B}_{r,\infty}^{\beta_r})}^{1-\theta}.\]
The second embedding follows from the fact that $L^r \cap \dot{B}_{r,\infty}^{\beta_r} \hookrightarrow B_{r,\infty}^{\beta_r}$: if $r > 2$, we choose $\theta = \frac{11-3r}{5} \in (0,1)$, and if $r = 2$, we use the fact that $L^\infty(0,T;L^2) \hookrightarrow L^2(0,T;L^2)$.
\end{proof}

\begin{remark} \label{r:Besov integrability remark}
By \eqref{e:Besov interpolation}, $L^2 \cap \dot{B}_{r,\infty}^{\beta_r} \hookrightarrow L^s$ for all $s \in [2,\frac{6r}{3r-5})$. In the converse direction we have, e.g., $\dot{H}^{5/(2r)} = \dot{B}_{2,2}^{5/(2r)} \hookrightarrow \dot{B}_{r,\infty}^{\beta_r}$~\cite[p. 63]{BCD11}. Therefore, every Leray-Hopf solution belongs to $\cup_{T > 0} L^r(T,\infty;\dot{B}_{r,\infty}^{\beta_r})$; this follows e.g. by using~\cite[Theorem B]{BZZ19}.
\end{remark}

\subsection{Navier-Stokes equations}
We list various classical results on weak and Leray-Hopf solutions that are used later. For a proof of the first one see e.g.~\cite[Theorems 8.14 and 14.4]{RRS16}.

\begin{theorem} \label{t:Leray}
For every $\u_0 \in L^2_\sigma$ there exists a Leray-Hopf solution $\u$ which satisfies, moreover, the \emph{strong energy inequality}
\[\frac{1}{2} \norm{\u(t)}_{L^2}^2 + \nu \int_{t'}^t \norm{\nabla \u(\tau)}_{L^2}^2 d\tau \leq \frac{1}{2} \norm{\u(t')}_{L^2}^2\]
for every $t > t'$ and a.e. $t' \geq 0$, including $t' = 0$. Furthermore, every such solution is smooth on an open set of times of full measure.
\end{theorem}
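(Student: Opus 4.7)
The plan is the classical Leray construction, followed by an epoch-of-regularity argument for the final clause. I proceed in four steps.

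\textbf{Step 1: Galerkin or mollification approximation.} I would mollify the nonlinearity by replacing $(\u \cdot \nabla) \u$ with $((J_\epsilon \u) \cdot \nabla) \u$, where $J_\epsilon$ is a standard mollifier (or, alternatively, project onto finite-dimensional subspaces via a Galerkin scheme). For each $\epsilon > 0$ the resulting system is globally well-posed in the class of smooth solenoidal vector fields, because the linear transport structure prevents finite-time blow-up once energy is controlled. Testing against $\u^\epsilon$ itself and using $\nabla \cdot (J_\epsilon \u^\epsilon) = 0$ gives the energy \emph{equality}
\begin{equation*}
\tfrac12 \norm{\u^\epsilon(t)}_{L^2}^2 + \nu \int_{t'}^t \norm{\nabla \u^\epsilon(\tau)}_{L^2}^2 \, d\tau = \tfrac12 \norm{\u^\epsilon(t')}_{L^2}^2
\end{equation*}
for all $t \geq t' \geq 0$, and in particular the uniform bound $\u^\epsilon \in L^\infty(0,\infty;L^2_\sigma) \cap L^2(0,\infty;\dot{H}^1)$.

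\textbf{Step 2: Passage to the limit.} Combining the uniform bounds with the equation, $\partial_t \u^\epsilon$ is bounded in $L^{4/3}(0,T;H^{-s})$ for some $s$, so Aubin--Lions yields a subsequence converging strongly in $L^2_{\tp{loc}}([0,\infty) \times \R^3)$ and weakly-$*$ in the energy class to some $\u$. Strong local convergence suffices to pass to the limit in the quadratic nonlinearity of \eqref{e:Weak solution}, giving a weak solution $\u$ with initial datum $\u_0$.

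\textbf{Step 3: Strong energy inequality.} Weak-$*$ lower semicontinuity of the $L^2_x$ norm and weak lower semicontinuity of $\int \norm{\nabla \u^\epsilon}_{L^2}^2$ give, for every $t > t' \geq 0$ and every $\epsilon$,
\begin{equation*}
\tfrac12 \norm{\u(t)}_{L^2}^2 + \nu \int_{t'}^t \norm{\nabla \u(\tau)}_{L^2}^2 \, d\tau \leq \liminf_{\epsilon \to 0} \tfrac12 \norm{\u^\epsilon(t')}_{L^2}^2.
\end{equation*}
To conclude, I need $\norm{\u^\epsilon(t')}_{L^2} \to \norm{\u(t')}_{L^2}$ for the relevant $t'$. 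Since the energy equality for $\u^\epsilon$ together with weak convergence $\u^\epsilon(t') \rightharpoonup \u(t')$ (available after extraction for a.e.\ $t'$ via Fubini applied to the strongly convergent subsequence) gives $\norm{\u^\epsilon(t')}_{L^2}^2 \to \norm{\u_0}_{L^2}^2 - 2\nu \int_0^{t'} \norm{\nabla \u^\epsilon}_{L^2}^2$, one extracts a further subsequence so that the right-hand side converges, and combines with the lower semicontinuity $\norm{\u(t')}_{L^2}^2 \leq \liminf_\epsilon \norm{\u^\epsilon(t')}_{L^2}^2$ applied in both directions (using that the full energy identity at $t'=0$ passes to the limit) to obtain strong convergence $\u^\epsilon(t') \to \u(t')$ in $L^2$ for a.e.\ $t' \geq 0$, including $t'=0$. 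Plugging this into the previous display gives the strong energy inequality.

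\textbf{Step 4: Epochs of regularity.} Now suppose $\u$ is any Leray-Hopf solution satisfying the strong energy inequality. Since $\int_0^\infty \norm{\nabla \u(\tau)}_{L^2}^2 d\tau \leq \frac{1}{2\nu}\norm{\u_0}_{L^2}^2 < \infty$, the set $E \defeq \{ t \geq 0 : \nabla \u(t) \in L^2 \}$ has full measure. For each $t_0 \in E$, local existence in $H^1$ (Fujita--Kato) produces a smooth mild solution $\tilde{\u}$ on $[t_0, t_0 + \delta(\norm{\nabla \u(t_0)}_{L^2}))$, and weak-strong uniqueness (applied from the initial time $t_0$, where $\u$ satisfies the energy inequality with starting point $t_0$) forces $\u \equiv \tilde{\u}$ on that interval. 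Hence the set of times at which $\u$ is smooth in a neighborhood is open and contains $E$, so it has full measure. The main subtlety, and really the only one that is not bookkeeping, is the strong $L^2$ convergence at $t'$ used in Step 3; this is the standard obstacle in upgrading the pointwise energy inequality to the strong one.
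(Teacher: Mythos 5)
The paper does not prove this theorem; it cites \cite[Theorems 8.14 and 14.4]{RRS16}, so your proposal should be measured against the standard construction. Your Steps 1--2 and the overall architecture (approximation, compactness, lower semicontinuity, epochs of regularity) are the right ones, but two steps contain genuine gaps. The serious one is in Step 3, the strong $L^2(\R^3)$ convergence $\u^\epsilon(t') \to \u(t')$ at a.e.\ $t'$. Your argument extracts $\lim_\epsilon \norm{\u^\epsilon(t')}_{L^2}^2 = \norm{\u_0}_{L^2}^2 - 2\nu \lim_\epsilon \int_0^{t'} \norm{\nabla \u^\epsilon}_{L^2}^2$ and then tries to match this with $\norm{\u(t')}_{L^2}^2$ by "lower semicontinuity in both directions, using that the full energy identity at $t'=0$ passes to the limit." But the energy \emph{identity} does not pass to the limit --- only the inequality does (whether Leray--Hopf solutions satisfy the energy equality is precisely one of the open problems this paper is about), and $\int_0^{t'}\norm{\nabla\u^\epsilon}^2$ can converge to something strictly larger than $\int_0^{t'}\norm{\nabla\u}^2$. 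So the reverse inequality $\limsup_\epsilon \norm{\u^\epsilon(t')}_{L^2} \leq \norm{\u(t')}_{L^2}$ is unproven and the argument is circular. The correct route, and the reason the whole-space case gets a separate theorem in \cite{RRS16}, is that Aubin--Lions gives only $L^2(0,T;L^2(K))$ convergence for compact $K$; to upgrade to $L^2(0,T;L^2(\R^3))$ one must rule out escape of energy to spatial infinity by a tail estimate $\sup_\epsilon \sup_{t\leq T} \int_{\abs{x}>R} \abs{\u^\epsilon(t)}^2 \to 0$ as $R \to \infty$, proved by testing the approximate equations against $\phi(x/R)^2 \u^\epsilon$ for a cutoff $\phi$ and controlling the commutator terms uniformly in $\epsilon$. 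This step is entirely absent.

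Step 4 also has a flawed justification: the open regular set $\mathcal{R} = \bigcup_{t_0 \in E} (t_0, t_0+\delta(t_0))$ does \emph{not} contain $E$ --- a time $t_0$ with $\nabla\u(t_0) \in L^2$ launches a smooth solution on an interval to its right but need not itself lie in an open interval of regularity, since $\delta(s)$ may shrink too fast as $s \uparrow t_0$. "Open and contains a full-measure set" is therefore not available. The standard fix is quantitative: if $t \notin \mathcal{R}$, then for a.e.\ $t_0 < t$ one has $\delta(t_0) \leq t - t_0$, which by the local existence time of Theorem \ref{t:Tao 2} forces $\norm{\nabla\u(t_0)}_{L^2}^2 \gtrsim_\nu (t-t_0)^{-1/2}$ for $t - t_0$ small; integrating this against $\int_0^\infty \norm{\nabla\u}_{L^2}^2 < \infty$ (via a Vitali covering or a Fubini argument in the variable $h = t - t_0$) shows the complement of $\mathcal{R}$ has $\mathcal{H}^{1/2}$ measure zero, hence Lebesgue measure zero. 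Your identification of the strong convergence at time $t'$ as "the only real subtlety" is accurate in spirit, but that is exactly the point at which the proof, as written, does not close.
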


For a proof of the next result see e.g.~\cite[Theorem 7.1]{Gal00}:
\begin{theorem} \label{t:L2 cap L3 data local solvability}
Suppose $\frac{2}{r}+\frac{3}{s}=1$ with $s \in [3,\infty)$. If $\u_0 \in L^2_\sigma \cap L^3$, then there exist $T > 0$ and a unique weak solution $\u \in L^\infty(0,T;L^2_\sigma) \cap L^2(0,T;\dot{H}^1) \cap L^r(0,T;L^s)$.
\end{theorem}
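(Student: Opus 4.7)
The plan is to construct a mild solution via Kato's fixed-point scheme in a time-weighted Lebesgue space and then identify it with a Leray-Hopf solution via weak-strong uniqueness; this last identification simultaneously upgrades the mild solution to the energy class $L^\infty(0,T;L^2_\sigma) \cap L^2(0,T;\dot H^1)$ and yields uniqueness within the stated class. Writing \eqref{e:NS1}--\eqref{e:NS2} in mild form
\[\u(t) = e^{t\nu\Delta}\u_0 - \int_0^t e^{(t-\tau)\nu\Delta}\mathbb{P}\nabla\cdot(\u\otimes\u)(\tau)\,d\tau,\]
I would pick an auxiliary Kato exponent $q>3$ (take $q=s$ if $s>3$, or $q=6$ if $s=3$), set $\alpha \defeq \tfrac{1}{2}-\tfrac{3}{2q}$, and work in
\[X_T \defeq \Bigl\{\u\in C((0,T];L^q): \sup_{0<t<T} t^{\alpha}\norm{\u(t)}_{L^q} < \infty\Bigr\}.\]
The heat-kernel bound $\norm{e^{t\nu\Delta}\u_0}_{L^q} \lesssim (\nu t)^{-\alpha}\norm{\u_0}_{L^3}$, combined with a density argument (approximating $\u_0$ by Schwartz fields in $L^3$), gives $\norm{e^{t\nu\Delta}\u_0}_{X_T}\to 0$ as $T\to 0^+$.

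The core estimate is the scale-invariant bilinear bound $\norm{B(\u,\vv)}_{X_T}\leq C_\nu\norm{\u}_{X_T}\norm{\vv}_{X_T}$ with $C_\nu$ independent of $T$, where $B(\u,\vv)(t) \defeq -\int_0^t e^{(t-\tau)\nu\Delta}\mathbb{P}\nabla\cdot(\u\otimes\vv)(\tau)\,d\tau$. This follows from $\norm{e^{\sigma\nu\Delta}\mathbb{P}\nabla\cdot f}_{L^q}\lesssim(\nu\sigma)^{-1/2-3/(2q)}\norm{f}_{L^{q/2}}$, H\"older in space, and, after the substitution $\tau=ts$, a convergent Beta integral in time whose integrability is guaranteed exactly by $q>3$, equivalently by the LPS criticality $\frac{2}{r}+\frac{3}{s}=1$. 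Picard iteration in $X_T$ then produces a mild solution $\u$ for $T$ small, and a bootstrap via the Duhamel formula (using heat-kernel characterisations of Besov spaces, as in \eqref{e:Heat characterisation of Besov spaces}) yields $\u\in C([0,T];L^3)\cap L^r(0,T;L^s)$.

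Finally, since $\u_0\in L^2_\sigma$, Theorem \ref{t:Leray} produces a Leray-Hopf solution $\vv$ with datum $\u_0$; the LPS regularity of the mild solution $\u$ then activates weak-strong uniqueness (Theorem \ref{t:Weak-strong uniqueness}), forcing $\vv\equiv\u$ on $[0,T]$. This grants $\u$ the desired energy-class regularity and simultaneously yields uniqueness within $L^\infty(0,T;L^2_\sigma)\cap L^2(0,T;\dot H^1)\cap L^r(0,T;L^s)$. The main technical obstacle is the scale-invariant bilinear estimate, equivalently the convergence of the Beta integral; this is precisely where the LPS criticality is used, and is classical. Every remaining step reduces to heat-kernel bounds and to results already cited in the paper.
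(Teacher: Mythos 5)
The paper does not prove this statement itself but only cites \cite[Theorem 7.1]{Gal00}, so your argument has to stand on its own. The architecture (Kato iteration for a mild solution, then weak--strong uniqueness to identify it with the Leray--Hopf solution) is the standard route, and the fixed-point part is sound: the bilinear estimate in $X_T$ and the smallness of $\norm{e^{t\nu\Delta}\u_0}_{X_T}$ as $T\to0^+$ are exactly as you describe. The gap is in the sentence ``a bootstrap via the Duhamel formula \dots yields $\u\in C([0,T];L^3)\cap L^r(0,T;L^s)$''. The space $X_T$ only gives $\norm{\u(t)}_{L^s}\lesssim t^{-\alpha}$ with $\alpha=\frac12-\frac{3}{2s}=\frac1r$, which is precisely the borderline non-integrable power: you obtain $\u\in L^{r,\infty}(0,T;L^s)$, not $L^r(0,T;L^s)$, and iterating Duhamel with the sup-weighted bound cannot improve this, since the bilinear term reproduces the same $t^{-1/r}$ rate. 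To reach the stated conclusion you must close the fixed point in $L^r(0,T;L^s)$ itself, which requires (i) $e^{t\nu\Delta}\u_0\in L^r(0,T;L^s)$, i.e.\ $\u_0\in B^{-2/r}_{s,r}$ by \eqref{e:Heat characterisation of Besov spaces}, and (ii) a bilinear estimate in $L^r(0,T;L^s)$, which follows from Hardy--Littlewood--Sobolev in the time variable. Step (ii) works for all $s\in(3,\infty)$, but step (i) follows from $\u_0\in L^3$ only when $r\geq 3$, i.e.\ $s\leq 9$: one uses $L^3\hookrightarrow\dot B^0_{3,3}\hookrightarrow\dot B^{-2/r}_{s,3}\hookrightarrow\dot B^{-2/r}_{s,r}$, and the last inclusion needs $r\geq 3$. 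None of this is spelled out in your proposal, and it is the crux of the theorem.

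For $s>9$ (so $r<3$) the argument cannot be repaired, and the statement itself appears to fail in that range: taking $\u_0$ to be a sum of well-separated, frequency-localised, divergence-free bumps with $\norm{\dot\Delta_j\u_0}_{L^3}\approx j^{-\beta}$ and $\frac13<\beta\leq\frac1r$ produces $\u_0\in L^2_\sigma\cap L^3$ with $\u_0\notin\dot B^{-2/r}_{s,r}$, and then Proposition \ref{p:No higher integrability} (which gives $\tilde r=r$ under the LPS relation) shows that no weak solution in $\cup_{T>0}L^r(0,T;L^s)$ exists at all. So the proof, and arguably the theorem, needs either the restriction $s\in[3,9]$ (the classical Giga range) or the stronger hypothesis $\u_0\in\dot B^{-2/r}_{s,r}$. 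A secondary, fixable point: Theorem \ref{t:Weak-strong uniqueness} requires the ``strong'' solution $\vv$ to lie in $L^\infty(0,T;L^2_\sigma)\cap L^2(0,T;\dot H^1)$ \emph{as a hypothesis}, so you cannot use it to confer the energy class on the mild solution; you must first propagate the $L^2$ and $\dot H^1$ bounds through the iteration (which is possible since $\u_0\in L^2_\sigma$) and only then invoke weak--strong uniqueness for the identification and the uniqueness claim.
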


 We then recall a version of \emph{weak-strong uniqueness} which says that a Leray-Hopf solution is unique if it belongs to an LPS class, see e.g.~\cite[Theorems 4.2 and 7.2]{Gal00}:

\begin{theorem} \label{t:Weak-strong uniqueness}
Let $\frac{2}{r}+\frac{3}{s}=1$ with $s \in [3,\infty)$. Let $\u_0 \in L^2_\sigma$ and $\u \in \mathcal{N}(\u_0)$, and suppose there exists a weak solution $\vv \in L^\infty(0,T;L^2_\sigma) \cap L^2(0,T;\dot{H}^1) \cap L^r(0,T;L^s)$ with datum $\u_0$. Then $\u = \vv$ in $\R^3 \times [0,T)$.
\end{theorem}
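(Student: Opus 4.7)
The plan is the classical Serrin-Prodi-Ladyzhenskaya weak-strong uniqueness argument. Set $\w \defeq \u - \vv$ so that $\w(\cdot,0) = \0$ and, formally,
\[\partial_t \w - \nu \Delta \w + (\w \cdot \nabla)\vv + (\u \cdot \nabla)\w + \nabla q = 0, \qquad \nabla \cdot \w = 0.\]
First, the LPS condition $2/r + 3/s = 1$ forces $\vv$ to be smooth on $\R^3 \times (0,T)$ (classical theory for $s > 3$, and~\cite{ISS03} for $s = 3$) and, in particular, to satisfy the energy equality on $(0,T)$. Using $\vv$ as a test function in the weak formulation for $\u$ (justified via a standard time-mollification and the weak continuity $\u \in C_w([0,T);L^2_\sigma)$) and $\u$ as a test function in the (now smooth) equation for $\vv$, then combining with the Leray-Hopf energy inequality for $\u$ and using $\int (\u \cdot \nabla)\w \cdot \w = 0$ by incompressibility, one arrives at
\[\frac{1}{2}\norm{\w(t)}_{L^2}^2 + \nu \int_0^t \norm{\nabla \w}_{L^2}^2\, d\tau \leq -\int_0^t \int_{\R^3} (\w \cdot \nabla)\vv \cdot \w \, dx\, d\tau.\]

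Next, estimate the right-hand side. Integration by parts (legitimate as $\vv$ is smooth and $\nabla \cdot \w = 0$) followed by H\"older gives
\[\left|\int (\w \cdot \nabla)\vv \cdot \w\right| = \left|\int (\w \cdot \nabla)\w \cdot \vv\right| \leq \norm{\vv}_{L^s}\norm{\w}_{L^{2s/(s-2)}}\norm{\nabla \w}_{L^2}.\]
For $s \in (3,\infty)$, the Gagliardo-Nirenberg inequality $\norm{\w}_{L^{2s/(s-2)}} \leq C \norm{\w}_{L^2}^{1-3/s}\norm{\nabla \w}_{L^2}^{3/s}$ combined with Young's inequality with conjugate exponents $\frac{2s}{s+3}$ and $\frac{2s}{s-3} = r$ produces
\[\left|\int (\w \cdot \nabla)\vv \cdot \w\right| \leq \frac{\nu}{2}\norm{\nabla \w}_{L^2}^2 + C_\nu \norm{\vv}_{L^s}^r \norm{\w}_{L^2}^2.\]
Absorbing the $\nabla \w$ term on the left, Gronwall's lemma together with $\w(\cdot,0) = \0$ and $\int_0^T \norm{\vv(\tau)}_{L^s}^r\,d\tau < \infty$ forces $\w \equiv \0$ on $[0,T)$.

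The principal obstacle is the rigorous justification of the test-function manipulations used in the first step, together with the endpoint $s=3$, $r=\infty$. For $s > 3$ the standard time-mollification argument and the strong energy inequality of Theorem \ref{t:Leray} suffice. At $s=3$, the Young absorption step above no longer yields the required gain (since $r=\infty$), and one instead decomposes $\vv = \vv_1 + \vv_2$ with $\vv_1$ smooth and $\vv_2$ uniformly small in $L^\infty(0,T;L^3)$---possible thanks to the strong continuity $\vv \in C([0,T);L^3)$ implied by the LPS regularity---so that $\vv_2$ is absorbed into the viscous term and $\vv_1$ enters a standard Gronwall estimate. Both routes are carried out in detail in~\cite[Theorems 4.2 and 7.2]{Gal00}.
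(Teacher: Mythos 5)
The paper does not actually prove Theorem \ref{t:Weak-strong uniqueness}; it recalls it as a classical result and points to~\cite[Theorems 4.2 and 7.2]{Gal00}, so there is no in-paper argument to compare against. Your proposal is precisely the standard Prodi--Serrin--Galdi weak-strong uniqueness proof that those references carry out, and the exponent bookkeeping (H\"older with $\frac{1}{s}+\frac{s-2}{2s}+\frac{1}{2}=1$, Gagliardo--Nirenberg with interpolation exponent $1-3/s$, Young with conjugate pair $\frac{2s}{s+3}$ and $r=\frac{2s}{s-3}$) is correct, as is the derivation of the energy identity for $\w$ from the energy inequality for $\u$, the energy equality for $\vv$ (which indeed holds since $L^\infty(0,T;L^2)\cap L^2(0,T;\dot H^1)\cap L^r(0,T;L^s)\hookrightarrow L^4(0,T;L^4)$ for this exponent range), and the mollified duality pairing. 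The one soft spot is the endpoint $s=3$: the assertion that $\vv\in C([0,T);L^3)$ is ``implied by the LPS regularity'' is not automatic --- interior smoothness from~\cite{ISS03} gives continuity on $(0,T)$, but strong $L^3$ continuity at $t=0$ does not follow from $\vv\in L^\infty(0,T;L^3)\cap C_w([0,T);L^2_\sigma)$ alone, and in~\cite{Gal00} it appears as a hypothesis rather than a conclusion. Since you ultimately defer the endpoint to the same reference the paper cites, this is a presentational gap rather than a fatal one, but you should either assume the continuity, invoke a result that dispenses with it, or restrict the smallness decomposition to an argument that only uses $L^\infty_t L^3_x$ boundedness.
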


Aside from smoothness, the LPS conditions imply higher integrability; see~\cite[Theorem 1.3]{ESS03} and~\cite[Theorems 7.3 and 8.17]{RRS16}.

\begin{theorem} \label{t:Higher integrability}
Let $\frac{2}{r}+\frac{3}{s}=1$ with $s \in [3,\infty)$ and $\u_0 \in L^2_\sigma$. If $\u \in \mathcal{N}(\u_0) \cap L^r(0,T;L^s)$, then $\u \in \cap_{m \in \N} \cap_{0 < \epsilon < T} C([\epsilon,T];H^m)$.
\end{theorem}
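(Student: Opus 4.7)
The plan is to separate the subcritical case $s \in (3,\infty)$ from the critical endpoint $s=3$ and, in the subcritical case, run a two-stage argument: first obtain $L^\infty_t H^1_x \cap L^2_t H^2_x$ regularity on $[\epsilon, T]$ by an energy estimate, then bootstrap iteratively to $H^m$ and derive time continuity from the equation.

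In the subcritical case, since $\u \in L^2(0,T;\dot{H}^1)$ by the definition of a Leray-Hopf solution, there is a full-measure set of ``good'' times $t_0 \in (0,\epsilon)$ with $\u(t_0) \in H^1_\sigma$. For such a starting time, Theorem \ref{t:L2 cap L3 data local solvability} together with an $H^1$ local-existence result produces a strong solution $\vv$ on some $[t_0, t_0+\delta]$, which by the weak--strong uniqueness statement Theorem \ref{t:Weak-strong uniqueness} coincides with $\u$. To propagate $H^1$ regularity up to $T$, I would test the equation against $-\Delta \u$, use $\nabla \cdot \u = 0$ to dispose of the pressure, and estimate
\begin{equation*}
\left|\int_{\R^3} (\u \cdot \nabla)\u \cdot \Delta \u \, dx\right|
\leq \|\u\|_{L^s}\|\nabla\u\|_{L^{2s/(s-2)}}\|\Delta \u\|_{L^2}
\leq C\|\u\|_{L^s}\|\nabla\u\|_{L^2}^{1-3/s}\|\Delta\u\|_{L^2}^{1+3/s}
\end{equation*}
via Hölder and Gagliardo--Nirenberg. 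Young's inequality then absorbs the highest-order factor into $\nu\|\Delta \u\|_{L^2}^2$ and, using the LPS relation $r=2s/(s-3)$, yields
\begin{equation*}
\frac{d}{dt}\|\nabla \u\|_{L^2}^2 + \nu\|\Delta\u\|_{L^2}^2 \leq C_\nu \|\u(t)\|_{L^s}^r \|\nabla \u\|_{L^2}^2.
\end{equation*}
Since $\u \in L^r(0,T;L^s)$, Grönwall gives $\u \in L^\infty(t_0,T;H^1)\cap L^2(t_0,T;H^2)$.

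To reach $H^m$, I would iterate: once $\u \in L^2(t_0,T;H^2)$ there is a full-measure set of $t_1 \in (t_0,\epsilon)$ with $\u(t_1) \in H^2$, and the corresponding energy identity for $\|\Delta \u\|_{L^2}^2$ can be closed using the embedding $H^2 \hookrightarrow L^\infty$ to handle the nonlinearity. Repeating the scheme on finer and finer subintervals $[t_m,T]$ with $t_m \nearrow \epsilon$ yields $\u \in L^\infty(\epsilon,T;H^m)$ for every $m$. Time continuity in $H^m$ follows from the equation $\partial_t \u = \nu \Delta \u - \mathbb{P}[(\u\cdot\nabla)\u]$: the right-hand side lies in $L^\infty(\epsilon,T;H^{m-2})$, so that $\u \in \mathrm{Lip}([\epsilon,T];H^{m-2})\cap L^\infty(\epsilon,T;H^m)$ and interpolation upgrades this to $\u \in C([\epsilon,T];H^{m-1})$; the parameter $m$ being arbitrary, $C([\epsilon,T];H^m)$ regularity for every $m$ follows.

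The main obstacle is the critical endpoint $s=3$, where the exponent of $\|\Delta\u\|_{L^2}$ in the Gagliardo--Nirenberg estimate is exactly $2$ and no Young absorption is possible; equivalently $r = \infty$ and the Grönwall step breaks. Here I would invoke the theorem of Escauriaza--Seregin--\v{S}ver\'ak~\cite{ISS03} to conclude that $\u$ has no spatial singularities on $(0,T]$; with smoothness in hand, the $H^m$ bootstrap and time-continuity argument of the previous paragraph apply verbatim starting from any good time $t_0 \in (0,\epsilon)$.
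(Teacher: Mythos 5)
The paper itself gives no proof of this statement---it is quoted as a classical result with references to \cite[Theorem 1.3]{ESS03} and \cite[Theorems 7.3 and 8.17]{RRS16}---and your argument is exactly the standard proof found in those sources: local smoothing from $H^1$ data at almost every starting time, identification with $\u$ via weak--strong uniqueness, propagation of $H^1$ regularity by the LPS--Gr\"onwall estimate, an $H^m$ bootstrap with time continuity read off from the equation, and the Escauriaza--Seregin--\v{S}ver\'ak theorem at the endpoint $s=3$. The one step deserving explicit care is the application of Theorem \ref{t:Weak-strong uniqueness} after restarting at $t_0$: the restriction $\u|_{[t_0,T)}$ is not known to satisfy the energy inequality from time $t_0$, so the roles must be assigned with the local strong solution (which does satisfy it) as the Leray--Hopf solution and $\u|_{[t_0,T)}$ as the weak solution in the LPS class---which is precisely what the global hypothesis $\u\in L^r(0,T;L^s)$ makes possible.
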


We will also need the following result of Galdi~\cite[Theorem 1.1]{Gal19} in the proof of Theorem \ref{t:Main}.
\begin{theorem} \label{t:Galdi19}
If $\frac{2}{r}+\frac{3}{s}=1$ with $s \in [3,\infty)$ and
\[\u \in \begin{cases} \cap_{\tau > 0} L^r(\tau,\infty;L^s), & s > 3, \\
\cap_{\tau > 0} C([\tau,\infty);L^3), & s = 3
\end{cases}\]
is a very weak solution with $\u(t) \rightharpoonup \u(0)$ as $t \to 0$, then $\u$ is a Leray-Hopf solution.
\end{theorem}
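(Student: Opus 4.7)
The plan is to leverage the LPS condition to gain full regularity on intervals bounded away from $t = 0$, derive the energy equality on such intervals, and then extend it to $t = 0$ via the weak convergence hypothesis. First, since $\u$ is a very weak solution belonging to the LPS class on every interval $[\tau, T]$ with $0 < \tau < T$, Serrin-type regularity (Theorem \ref{t:Higher integrability} for $s > 3$ and the Escauriaza-Seregin-\v{S}ver\'ak result for $s = 3$) will give $\u \in C^\infty(\R^3 \times (0, \infty))$, once finite energy on such intervals is established. That finite-energy fact should follow from the weak convergence assumption $\u(t) \rightharpoonup \u(0)$ in $L^2$ together with the uniform boundedness principle: one obtains $\sup_{0 < \tau < 1} \norm{\u(\tau)}_{L^2} < \infty$, so $\u(\tau) \in L^2$ for every sufficiently small $\tau > 0$.

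Next, I would multiply the momentum equation by $\u$ and integrate to get the energy equality
\[
\frac{1}{2} \norm{\u(\tau_2)}_{L^2}^2 + \nu \int_{\tau_1}^{\tau_2} \norm{\nabla \u(s)}_{L^2}^2 \, ds = \frac{1}{2} \norm{\u(\tau_1)}_{L^2}^2
\]
for every $0 < \tau_1 < \tau_2 < \infty$. Monotonicity of $t \mapsto \norm{\u(t)}_{L^2}$ combined with the boundedness above yields $\u \in L^\infty(0, \infty; L^2_\sigma)$ and $\u \in L^2(0, T; \dot{H}^1)$ for every $T > 0$. Setting $L_0 \defeq \lim_{t \to 0^+} \norm{\u(t)}_{L^2}^2$, weak lower semicontinuity of the $L^2$ norm gives $\norm{\u(0)}_{L^2}^2 \leq L_0 < \infty$, so $\u(0) \in L^2_\sigma$. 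It remains to pass to the limit $\tau_1 \to 0^+$ in the energy equality.

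The main obstacle is upgrading $\u(\tau) \rightharpoonup \u(0)$ to strong $L^2$ convergence, which would force $L_0 = \norm{\u(0)}_{L^2}^2$. My approach is to write the Duhamel representation
\[
\u(t) = e^{\nu(t - \tau)\Delta} \u(\tau) - \int_\tau^t e^{\nu(t - s)\Delta} \mathbb{P} \nabla \cdot (\u \otimes \u)(s) \, ds
\]
on $(\tau, t)$ and pass $\tau \to 0^+$. The LPS scaling is precisely what is needed so that the nonlinear term, which lies in $L^{r/2}(\tau, \infty; L^{s/2})$ with $\frac{2}{r/2} + \frac{3}{s/2} = 2$, gives a convolution with the heat kernel that remains integrable down to $s = 0$. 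Combined with weak convergence $\u(\tau) \rightharpoonup \u(0)$ and standard heat semigroup estimates, this should yield the mild solution identity
\[
\u(t) = e^{\nu t \Delta} \u(0) - \int_0^t e^{\nu(t - s)\Delta} \mathbb{P} \nabla \cdot (\u \otimes \u)(s) \, ds,
\]
from which strong $L^2$ continuity at $t = 0$ follows by classical Kato-type estimates. Taking $\tau_1 \to 0^+$ in the energy equality then produces the energy equality (and hence the energy inequality) down to $t = 0$, so $\u$ is a Leray-Hopf solution.
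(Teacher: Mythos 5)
The paper does not actually prove this statement: it is quoted verbatim from Galdi~\cite[Theorem 1.1]{Gal19}, so there is no internal proof to compare against, and your sketch has to stand on its own. Its overall architecture (regularity and the energy equality on $(\tau_1,\tau_2)$ for $0<\tau_1<\tau_2$, then strong $L^2$ continuity at $t=0$) is the right one, but two steps have genuine gaps. The first is the regularity step: a very weak solution is only $L^2_{\tp{loc}}$ in space-time with no a priori bound on $\nabla\u$, so you may neither invoke Theorem \ref{t:Higher integrability} (stated for Leray--Hopf solutions) nor multiply the equation by $\u$ and integrate by parts. The substantive content here is a \emph{uniqueness theorem for distributional solutions} in LPS classes (Fabes--Jones--Rivi\`ere/Galdi), which identifies $\u|_{[\tau,\infty)}$ with the strong solution emanating from $\u(\tau)\in L^2\cap L^s$; only after that identification do you obtain $\nabla\u\in L^2$ on interior intervals and the interior energy equality. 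Knowing merely that $\u(\tau)\in L^2$ (which your Banach--Steinhaus argument does give, for small $\tau$) does not license the energy computation.

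The more serious gap is the strong continuity at $t=0$. The hypothesis places $\u$ in $L^r(\tau,\infty;L^s)$ for each $\tau>0$ but with no uniform control as $\tau\to0$; in particular $\norm{\u}_{L^r(t/2,t;L^s)}$ may blow up as $t\to0$, so the claim that ``the LPS scaling makes the convolution with the heat kernel integrable down to $s=0$'' is not available. Near $s=0$ the only bound on $\u\otimes\u$ is $L^\infty(0,\delta;L^1)$, and the corresponding smoothing estimate $\norm{e^{\nu(t-s)\Delta}\mathbb{P}\nabla\cdot(\u\otimes\u)(s)}_{L^2}\lesssim (t-s)^{-5/4}\norm{\u(s)}_{L^2}^2$ has a non-integrable singularity. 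Moreover, even granting the mild identity at $t=0$, writing $\u(t)=e^{\nu t\Delta}\u(0)-B(t)$ and using $B(t)\rightharpoonup 0$ together with strong continuity of the heat semigroup only yields $\lim_{t\to0}\norm{\u(t)}_{L^2}^2=\norm{\u(0)}_{L^2}^2+\lim_{t\to0}\norm{B(t)}_{L^2}^2$, so deducing $\norm{B(t)}_{L^2}\to0$ ``by Kato-type estimates'' is circular without smallness of $\norm{\u}_{L^r(0,t;L^s)}$ as $t\to0$. This is precisely the point where Galdi's argument uses quantitative control of the LPS norm down to the initial time; with only $\cap_{\tau>0}L^r(\tau,\infty;L^s)$ you still need an argument excluding an energy jump at $t=0^+$, i.e.\ showing $\lim_{t\to0}\norm{\u(t)}_{L^2}^2\le\norm{\u(0)}_{L^2}^2$, which is the opposite of the inequality that weak lower semicontinuity provides.
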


The proof of Theorem \ref{t:Main} also uses heavily an \emph{a priori} estimate of Tao on mild $H^1$ solutions. For the definition of such solutions recall that the \emph{Leray Projection} $\mathbb{P} \defeq [\delta_{ij} + R_i R_j]_{i,j=1}^3$ (where $R_i$ are Riesz transforms) maps $L^p$ boundedly onto $L^p_\sigma$ for all $p \in (1,\infty)$.

\begin{definition}
Suppose $\u_0 \in H^1_\sigma$. Then $\u \in L^\infty(0,T;H^1_\sigma) \cap L^2(0,T;H^2)$ is called an \emph{$H^1$ mild solution} of \eqref{e:NS1}--\eqref{e:NS2} if
\[\u(t) = e^{\nu t\Delta} \u_0 - \int_0^t e^{\nu (t-\tau) \Delta} \mathbb{P} [\u(\tau) \cdot \nabla \u(\tau)] \, d\tau\]
for a.e. $t \in [0,T]$.
\end{definition}

Given $\u_0 \in H^1_\sigma$, a mapping $\u \in L^\infty(0,T;H^1_\sigma) \cap L^2(0,T;H^2)$ is an $H^1$ mild solution with datum $\u_0$ if and only if $\u$ is a weak solution in $\R^3 \times [0,T)$, i.e., satisfies \eqref{e:Weak solution}; see~\cite[Theorem 2.1]{FJR72}. We recall a classical local existence and uniqueness result, see e.g.~\cite[Theorem 5.4]{Tao13}:

\begin{theorem} \label{t:Tao 2}
There exist $c,C > 0$ with the following property: if $\norm{\u_0}_{H^1}^4 T \leq c$, then there exists a unique $H^1$ mild solution $\u \in L^\infty(0,T;H^1) \cap L^2(0,T;H^2)$, and furthermore $\norm{\u}_{L^\infty(0,T;H^1)} + \norm{\u}_{L^2(0,T;H^2)} \leq C \norm{\u_0}_{H^1}$.
\end{theorem}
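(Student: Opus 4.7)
The plan is to establish Theorem~\ref{t:Tao 2} by a standard Banach fixed-point argument applied to the Duhamel formulation
\[\Phi(\u)(t) \defeq e^{\nu t\Delta}\u_0 - \int_0^t e^{\nu(t-\tau)\Delta}\mathbb{P}[\u(\tau)\cdot\nabla\u(\tau)]\,d\tau\]
on the complete Banach space $X_T \defeq L^\infty(0,T;H^1_\sigma) \cap L^2(0,T;H^2)$ equipped with the norm $\|\u\|_{X_T} \defeq \|\u\|_{L^\infty(0,T;H^1)} + \|\u\|_{L^2(0,T;H^2)}$. The goal is to locate a closed ball $B_R \subset X_T$ on which $\Phi$ is both a self-map and a strict contraction; the unique fixed point is then automatically divergence free (since $\mathbb{P}$ projects onto solenoidal fields and commutes with $e^{\nu t \Delta}$) and is an $H^1$ mild solution on $[0,T]$ satisfying $\|\u\|_{X_T} \leq R$.

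The linear contribution is controlled by the spectral representation of the heat semigroup, yielding $\|e^{\nu t\Delta}\u_0\|_{X_T} \leq C_0\|\u_0\|_{H^1}$ uniformly in $T$. For the Duhamel term I would invoke parabolic maximal regularity to bound it in $X_T$ by a constant multiple of $\|\u\cdot\nabla\u\|_{L^2(0,T;L^2)}$. The crucial step is then to produce a positive power of $T$: using the three-dimensional Sobolev embedding $H^1(\R^3)\hookrightarrow L^6(\R^3)$ together with the interpolation $\|\nabla\u\|_{L^3} \leq C\|\nabla\u\|_{L^2}^{1/2}\|\nabla\u\|_{L^6}^{1/2} \leq C\|\u\|_{H^1}^{1/2}\|\u\|_{H^2}^{1/2}$, I obtain, pointwise in time,
\[\|\u\cdot\nabla\u\|_{L^2} \leq \|\u\|_{L^6}\|\nabla\u\|_{L^3} \leq C\|\u\|_{H^1}^{3/2}\|\u\|_{H^2}^{1/2},\]
after which H\"older in $t$ produces
\[\|\u\cdot\nabla\u\|_{L^2(0,T;L^2)} \leq CT^{1/4}\|\u\|_{L^\infty H^1}^{3/2}\|\u\|_{L^2 H^2}^{1/2} \leq CT^{1/4}\|\u\|_{X_T}^2.\]
Writing $\u\cdot\nabla\u - \vv\cdot\nabla\vv = (\u-\vv)\cdot\nabla\u + \vv\cdot\nabla(\u-\vv)$ and applying the same estimate to each summand yields the companion contraction inequality $\|\Phi(\u)-\Phi(\vv)\|_{X_T} \leq CT^{1/4}(\|\u\|_{X_T}+\|\vv\|_{X_T})\|\u-\vv\|_{X_T}$.

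Setting $R \defeq 2C_0\|\u_0\|_{H^1}$ and requiring $CT^{1/4}R \leq 1/4$ simultaneously gives the self-mapping and contraction properties on $B_R$; this inequality translates to the smallness threshold $\|\u_0\|_{H^1}^4 T \leq c$ stated in the theorem, together with the a priori estimate $\|\u\|_{X_T} \leq R \leq C\|\u_0\|_{H^1}$. Uniqueness within $X_T$ is immediate from the contraction, with a standard continuation bootstrap if uniqueness outside $B_R$ is desired. The main technical point I expect is isolating the precise bilinear estimate that yields a $T^{1/4}$ gain with the particular powers $\|\u\|_{L^\infty H^1}^{3/2}\|\u\|_{L^2 H^2}^{1/2}$: any other distribution of norms would lead to a different power of $T$, and it is specifically this split that enforces the scaling $\|\u_0\|_{H^1}^4 T \leq c$ appearing in the statement rather than some alternative power of $\|\u_0\|_{H^1}$.
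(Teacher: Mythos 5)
Your fixed-point argument is correct, and the paper does not prove this statement itself but cites it as a classical result (\cite[Theorem 5.4]{Tao13}), whose proof is exactly this contraction-mapping scheme in $L^\infty(0,T;H^1)\cap L^2(0,T;H^2)$ with the bilinear estimate $\norm{\u\cdot\nabla\u}_{L^2(0,T;L^2)}\lesssim T^{1/4}\norm{\u}_{L^\infty H^1}^{3/2}\norm{\u}_{L^2 H^2}^{1/2}$. The H\"older/Sobolev exponents, the $T^{1/4}$ gain, and the resulting smallness condition $\norm{\u_0}_{H^1}^4 T\leq c$ all check out.
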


Global existence of $H^1$ mild solutions remains open. We recall an equivalent condition, an \emph{a priori} estimate given in ~\cite[Theorem 1.20(vi)]{Tao13}.

\begin{theorem} \label{t:Tao}
Suppose that for every $\u_0 \in H^1_\sigma$ and $T > 0$ there exists an $H^1$ mild solution $\u \in L^\infty(0,T;H^1_\sigma) \cap L^2(0,T;H^2)$. Then there exists a function $F \colon (0,\infty) \to (0,\infty)$ with the following property: whenever $\u \in L^\infty(0,T;H^1_\sigma) \cap L^2(0,T;H^2) \cap C^\infty(\R^3 \times [0,T])$ is a solution of \eqref{e:NS1}--\eqref{e:NS2} with $\norm{\u_0}_{H^1} \leq A < \infty$, we have
\[\norm{\u}_{L^\infty(0,T;H^1)} \leq F(A).\]
\end{theorem}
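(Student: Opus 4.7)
The plan is to argue by contradiction, via the profile decomposition technique of Bahouri--G\'erard and Gallagher. Suppose that for some $A>0$ no finite $F(A)$ satisfies the conclusion: then I extract times $T_n>0$ and smooth solutions $\u_n \in L^\infty(0,T_n;H^1_\sigma) \cap L^2(0,T_n;H^2)$ of \eqref{e:NS1}--\eqref{e:NS2} with $\norm{\u_n(0)}_{H^1} \leq A$ but $\norm{\u_n}_{L^\infty(0,T_n;H^1)} \to \infty$. The first step is to apply an $H^1_\sigma$ profile decomposition to the bounded sequence $(\u_n(0))_n$: up to a subsequence,
\[
\u_n(0) = \sum_{j=1}^J \frac{1}{\sqrt{\lambda_n^j}}\, \mathbf{V}^j\!\left(\frac{\cdot - x_n^j}{\lambda_n^j}\right) + \w_n^J,
\]
with profiles $\mathbf{V}^j \in H^1_\sigma$, scales $\lambda_n^j>0$, translations $x_n^j \in \R^3$ satisfying the standard asymptotic orthogonality, and with the heat extension of $\w_n^J$ small in a suitable Strichartz-type norm as $J,n \to \infty$.

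Next, I invoke the hypothesis of the theorem to obtain, for each $j$, a global $H^1$ mild solution $\mathbf{U}^j$ with datum $\mathbf{V}^j$. By the scaling and translation invariances of \eqref{e:NS1}, each rescaled, translated profile evolution is also an $H^1$ mild solution of \eqref{e:NS1}--\eqref{e:NS2}. I would then set
\[
\u_n^{\tp{app}}(x,t) \defeq \sum_{j=1}^J \frac{1}{\sqrt{\lambda_n^j}}\, \mathbf{U}^j\!\left(\frac{x - x_n^j}{\lambda_n^j},\, \frac{t}{(\lambda_n^j)^2}\right) + e^{\nu t \Delta} \w_n^J,
\]
and use a nonlinear stability argument around this superposition---controlling cross interactions of distinct profiles via the asymptotic orthogonality of $(\lambda_n^j, x_n^j)$ and the small heat-evolution remainder---to conclude $\norm{\u_n - \u_n^{\tp{app}}}_{L^\infty(0,T_n;H^1)} \to 0$ as $n \to \infty$.

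The final step is to ensure that $\u_n^{\tp{app}}$ is bounded in $L^\infty(0,\infty;H^1)$ uniformly in $n$ and $T_n$. For each individual $\mathbf{U}^j$, a time-uniform $H^1$ bound follows by iterating the local $H^1$ well-posedness of Theorem \ref{t:Tao 2}: the Leray-Hopf energy decay guarantees that the kinetic energy (and, eventually via the $L^2$-integrability of $\norm{\nabla \mathbf{U}^j}_{L^2}$, the full $H^1$ norm on a set of positive measure) drops below a threshold at which the local existence time of Theorem \ref{t:Tao 2} becomes uniform. Combined with the orthogonality of the profile parameters, this yields a uniform $L^\infty(0,\infty;H^1)$ bound on $\u_n^{\tp{app}}$, contradicting the assumed blow-up of $\norm{\u_n}_{L^\infty(0,T_n;H^1)}$.

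The main obstacle will be the nonlinear stability estimate in the second paragraph: since $H^1$ is supercritical for the 3D Navier-Stokes equations on $\R^3$, no direct Picard contraction suffices, and one must combine the local $H^1$ theory of Theorem \ref{t:Tao 2} with the concentration-compactness framework of~\cite{Tao13} to propagate the smallness of the perturbation globally in $n$ while the profile evolutions remain essentially decoupled on the time scales that matter for each $j$.
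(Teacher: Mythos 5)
First, note that the paper does not prove this statement at all: Theorem \ref{t:Tao} is imported verbatim from \cite[Theorem 1.20(vi)]{Tao13} (``We recall an equivalent condition, an \emph{a priori} estimate given in \cite[Theorem 1.20(vi)]{Tao13}''), so there is no internal proof to compare against. Judged on its own merits, your proposal contains a genuine gap at its very core. You perform an $H^1$ profile decomposition with the $\dot H^1$-invariant normalisation $\lambda^{-1/2}\,\mathbf{V}^j((\cdot-x_n^j)/\lambda_n^j)$ and then assert that $\lambda^{-1/2}\,\mathbf{U}^j(x/\lambda,t/\lambda^2)$ is again a solution of \eqref{e:NS1}--\eqref{e:NS2}. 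It is not: the Navier--Stokes scaling symmetry requires the amplitude factor $\lambda^{-1}$, and with $\lambda^{-1/2}$ the linear terms scale like $\lambda^{-5/2}$ while the nonlinearity scales like $\lambda^{-2}$. Consequently your ansatz $\u_n^{\tp{app}}$ is not a superposition of solutions, and the entire nonlinear stability argument built around it has nothing to perturb off of. This is precisely why the profile-decomposition route (Gallagher, Rusin--\v{S}ver\'ak, Jia--\v{S}ver\'ak) is confined to \emph{critical} data spaces such as $\dot H^{1/2}$ and $L^3$, where the norm-preserving scaling coincides with the scaling of the equation; the paper explicitly contrasts those critical results with ``the subcritical result of Theorem \ref{t:Tao}''. (Relatedly, your remark that ``$H^1$ is supercritical'' is backwards: $H^1(\R^3)$ is subcritical, which is exactly why Theorem \ref{t:Tao 2} gives local well-posedness by Picard iteration with an existence time depending only on $\norm{\u_0}_{H^1}$.)

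In the subcritical setting the obstruction to the naive compactness argument is not scaling but translation invariance (an $H^1$-bounded sequence can split into bubbles escaping to spatial infinity, plus pieces spreading to large scales that carry only $L^2$ mass), and Tao's actual proof handles this by combining compactness modulo translations with the spatial localisation machinery of \cite{Tao13} (local enstrophy estimates), rather than by a profile superposition. If you want to pursue a decomposition-based proof you would have to (a) use only translation profiles with unit scale, which is not a complete decomposition of an $H^1$-bounded sequence, or (b) explain how the large-scale/spreading components are absorbed; as written, neither issue is addressed, and the final uniform $L^\infty(0,\infty;H^1)$ bound on $\u_n^{\tp{app}}$ is asserted rather than proved.
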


In Theorem \ref{t:Tao}, we can set $F(A) = A$ for small $A > 0$; see e.g.~\cite[pp. 141--142]{RRS16}. We can of course also assume that $F$ is increasing. There are also similar \emph{a priori} estimates of Gallagher~\cite[Corollary 1]{Gal00}, Rusin and \v{S}ver\'ak~\cite[Corollary 4.4]{RS11} as well as Jia and \v{S}ver\'ak~\cite[Corollary 4.2]{JS13} for data in critical spaces such as $\dot{H}^{1/2}$ and $L^3$. However, the subcritical result of Theorem \ref{t:Tao} suffices for the purposes of this paper.

We will also need a global well-posedness result for data with small $\dot{H}^{1/2}$ norm; see ~\cite[Theorem 5.6 and Proposition 5.16]{BCD11} as well as the proof of~\cite[Proposition 5.13]{BCD11}.

\begin{theorem} \label{t:Fujita-Kato}
There is a constant $c > 0$ such that if $\u_0 \in H^{1/2}$ with $\nabla \cdot \u_0 = 0$ and $\norm{\u_0}_{\dot{H}^{1/2}} \leq c \nu$, then there exists $\u \in \NN_\nu(\u_0) \cap C([0,\infty);\dot{H}^{1/2}) \cap L^2(0,\infty;\dot{H}^{3/2})$ that satisfies
\[\norm{\u(t)}_{\dot{H}^{1/2}}^2 + \nu \int_0^t \norm{\nabla \u(\tau)}_{\dot{H}^{1/2}}^2 \, d\tau \leq \norm{\u_0}_{\dot{H}^{1/2}}^2\]
for all $t \geq 0$.
\end{theorem}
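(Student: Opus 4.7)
The plan is a standard Picard fixed-point argument on the mild formulation at the scale-critical regularity $\dot{H}^{1/2}$, followed by an \emph{a posteriori} energy estimate in $\dot{H}^{1/2}$ to recover the sharp inequality and an identification of the mild solution with a Leray-Hopf solution. Define the Banach space
\[X \defeq C_b([0,\infty);\dot{H}^{1/2}) \cap L^2(0,\infty;\dot{H}^{3/2}), \qquad \norm{\u}_X^2 \defeq \norm{\u}_{L^\infty_t \dot{H}^{1/2}}^2 + \nu \norm{\u}_{L^2_t \dot{H}^{3/2}}^2,\]
and rewrite the equation as $\u = e^{\nu t \Delta} \u_0 + B(\u,\u)$ with $B(\u,\vv)(t) \defeq -\int_0^t e^{\nu (t-s) \Delta} \mathbb{P}(\u(s) \cdot \nabla \vv(s)) \, ds$. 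Plancherel's theorem gives $\norm{e^{\nu \cdot \Delta} \u_0}_X \leq \norm{\u_0}_{\dot{H}^{1/2}}$ immediately.

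The core step is the bilinear bound $\norm{B(\u,\vv)}_X \leq C_0 \nu^{-1} \norm{\u}_X \norm{\vv}_X$. First I invoke the three-dimensional Sobolev product law $\dot{H}^{1/2} \cdot \dot{H}^{1/2} \hookrightarrow \dot{H}^{-1/2}$, proved via Bony's paraproduct decomposition (or by duality with the Sobolev embedding $\dot{H}^{1/2} \hookrightarrow L^3$), to obtain $\norm{\u \cdot \nabla \vv}_{\dot{H}^{-1/2}} \lesssim \norm{\u}_{\dot{H}^{1/2}} \norm{\vv}_{\dot{H}^{3/2}}$ pointwise in time, and hence $\norm{\u \cdot \nabla \vv}_{L^2_t \dot{H}^{-1/2}} \lesssim \nu^{-1/2} \norm{\u}_X \norm{\vv}_X$. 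Then maximal parabolic regularity for $\partial_t w - \nu \Delta w = \mathbb{P} f$ with $w(0)=0$, proved by Fourier-side energy estimates, gives $\norm{w}_X \lesssim \nu^{-1/2} \norm{f}_{L^2_t \dot{H}^{-1/2}}$, which closes the estimate. A standard Picard lemma then produces a unique $\u \in X$ with $\norm{\u}_X \leq 2 \norm{\u_0}_{\dot{H}^{1/2}}$ whenever $4 C_0 \nu^{-1} \norm{\u_0}_{\dot{H}^{1/2}} \leq 1$.

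To upgrade the $X$-bound to the stated monotone inequality, I would pair the equation (for smooth approximations) with $(-\Delta)^{1/2} \u$, giving
\[\tfrac{d}{dt} \norm{\u}_{\dot{H}^{1/2}}^2 + 2\nu \norm{\u}_{\dot{H}^{3/2}}^2 = -2 \langle \u \cdot \nabla \u, (-\Delta)^{1/2} \u \rangle_{L^2} \leq 2 C_0 \norm{\u}_{\dot{H}^{1/2}} \norm{\u}_{\dot{H}^{3/2}}^2\]
by the same product law; a continuity argument shows that as long as $c \leq 1/(2C_0)$, the factor $2 C_0 \norm{\u(t)}_{\dot{H}^{1/2}}$ stays below $\nu$ for all $t$, yielding the claimed inequality. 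Finally, the embedding $X \hookrightarrow L^4(0,\infty;\dot{H}^1) \hookrightarrow L^4(0,\infty;L^6)$ (via interpolation and Sobolev) places $\u$ in the LPS class with $(r,s)=(4,6)$, so that combining Leray's existence result (Theorem \ref{t:Leray}) with weak-strong uniqueness (Theorem \ref{t:Weak-strong uniqueness}) identifies the mild solution with a Leray-Hopf solution and delivers the $L^2$ energy inequality automatically. The main obstacle is the critical Sobolev product law itself: at the $\dot{H}^{1/2}$ threshold one cannot avoid a paraproduct/Littlewood-Paley decomposition, whereas all remaining steps amount to standard energy bookkeeping.
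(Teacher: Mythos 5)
Your route is essentially the one behind the paper's citation of this result (the paper does not prove it but refers to \cite[Theorem 5.6, Propositions 5.13 and 5.16]{BCD11}): a Picard fixed point in $C_b([0,\infty);\dot{H}^{1/2}) \cap L^2(0,\infty;\dot{H}^{3/2})$ via the product law $\dot{H}^{1/2}\cdot\dot{H}^{1/2}\hookrightarrow\dot{H}^{-1/2}$ and parabolic maximal regularity, followed by the $\dot{H}^{1/2}$ energy estimate and a continuity argument; all of these steps are correct, including the viscosity bookkeeping that produces the smallness condition $\norm{\u_0}_{\dot{H}^{1/2}}\leq c\nu$.

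There is, however, one concrete step you assert but do not justify: the identification of the mild solution with a Leray-Hopf solution. To invoke Theorem \ref{t:Weak-strong uniqueness} you need the strong solution to lie in $L^\infty(0,T;L^2_\sigma)\cap L^2(0,T;\dot{H}^1)$, and the conclusion $\u\in\NN_\nu(\u_0)$ itself requires membership in the energy space; but $C_b([0,\infty);\dot{H}^{1/2})\cap L^2(0,\infty;\dot{H}^{3/2})$ gives only $L^4(0,\infty;\dot{H}^1)$ by interpolation, not $L^2(0,\infty;\dot{H}^1)$, and no control whatsoever on $\norm{\u(t)}_{L^2}$. Since $\u_0\in H^{1/2}\subset L^2$, this is repaired by propagating the $L^2$ norm through the same machinery one level down: the product law $\dot{H}^{1/2}\cdot L^2\hookrightarrow\dot{H}^{-1}$ gives $\norm{\u\cdot\nabla\vv}_{L^2_t\dot{H}^{-1}}\lesssim\norm{\u}_{L^\infty_t\dot{H}^{1/2}}\norm{\vv}_{L^2_t\dot{H}^1}$, and maximal regularity then closes a fixed-point (or energy) estimate in $C_b([0,\infty);L^2)\cap L^2(0,\infty;\dot{H}^1)$ under the same smallness of $\norm{\u_0}_{\dot{H}^{1/2}}/\nu$. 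This is precisely the content of \cite[Proposition 5.16]{BCD11}, which the paper cites alongside the existence theorem; with it in place, your appeal to Theorems \ref{t:Leray} and \ref{t:Weak-strong uniqueness} goes through and the argument is complete.
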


\begin{remark} \label{r:Embeddings}
Since $\dot{H}^{1/2}$ embeds into $L^3$, Theorem \ref{t:Weak-strong uniqueness} implies that the solution $\u$ given by Theorem \ref{t:Fujita-Kato} is a unique Leray-Hopf solution with datum $\u_0$. By the inequality $\norm{\u(t)}_{\tp{BMO}} \leq C \norm{\u(t)}_{\dot{H}^{3/2}}$ we get $\norm{\u}_{L^2(0,\infty;\tp{BMO})} \leq C \norm{\u_0}_{\dot{H}^{1/2}}/\sqrt{\nu}$.
\end{remark}

To close the chapter, we discuss the weak$^*$ compactness properties of weak and Leray-Hopf solutions.
\begin{proposition} \label{p:Weak* stability}
If $\u_{0,j} \to \u_0$ in $L^2_\sigma$ and $\u_j \in \mathcal{N}_\nu(\u_{0,j})$ for every $j \in \N$, then for a subsequence, $\u_j \overset{*}{\rightharpoonup} \u \in \mathcal{N}_\nu(\u_0)$ in $L^\infty(0,\infty;L^2_\sigma) \cap L^2(0,\infty;\dot{H}^1)$.

If $\u_{0,j} \rightharpoonup \u_0$ in $L^2_\sigma$ and $\u_j \in \mathcal{W}_\nu(\u_{0,j})$ satisfy $\u_j \overset{*}{\rightharpoonup} \u$ in $L^\infty(0,\infty;L^2_\sigma) \cap L^2(0,\infty;\dot{H}^1)$, then $\u \in \mathcal{W}_\nu(\u_0)$.
\end{proposition}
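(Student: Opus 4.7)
\textbf{Setup and extraction of a weak$^*$ limit.} For the first claim, the energy inequality applied to each $\u_j \in \NN_\nu(\u_{0,j})$ gives $\tfrac12\|\u_j(t)\|_{L^2}^2 + \nu \int_0^t \|\nabla\u_j\|_{L^2}^2 \le \tfrac12\|\u_{0,j}\|_{L^2}^2$, and since $\|\u_{0,j}\|_{L^2}$ is bounded, the sequence $(\u_j)$ is bounded in $L^\infty(0,\infty;L^2_\sigma) \cap L^2(0,\infty;\dot H^1)$. After passing to a subsequence, $\u_j \overset{*}{\rightharpoonup} \u$ in this space. For the second claim, the bound is already assumed.

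\textbf{Strong local compactness and passage to the limit in the weak formulation.} The main work is to pass to the limit in the nonlinear term, which requires local strong compactness. By Gagliardo--Nirenberg, $\u_j$ is bounded in $L^{10/3}(0,T;L^{10/3})$, so $\u_j \otimes \u_j$ is bounded in $L^{5/3}(0,T;L^{5/3})$. Using $\partial_t \u_j = \nu\Delta\u_j - \mathbb{P}\nabla\cdot(\u_j\otimes\u_j)$ in the sense of distributions acting on divergence-free test functions, one obtains a uniform bound on $\partial_t \u_j$ in $L^{5/3}(0,T;H^{-N}_{\mathrm{loc}})$ for some $N$. Since $H^1_{\mathrm{loc}} \hookrightarrow L^2_{\mathrm{loc}} \hookrightarrow H^{-N}_{\mathrm{loc}}$ with the first embedding compact, Aubin--Lions gives (along a further subsequence) strong convergence $\u_j \to \u$ in $L^2_{\mathrm{loc}}([0,T]\times\R^3)$ for each $T>0$. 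Combined with the weak $L^2$ convergence this yields $\u_j \otimes \u_j \to \u \otimes \u$ in $L^1_{\mathrm{loc}}$, and all terms in \eqref{e:Weak solution} pass to the limit. The initial datum pairing $-\int_{\R^3}\u_{0,j}\cdot \varphi(\cdot,0)$ converges to $-\int_{\R^3}\u_0\cdot \varphi(\cdot,0)$ from strong (claim 1) or weak (claim 2) $L^2$ convergence of $\u_{0,j}$. This proves that $\u \in \W_\nu(\u_0)$, settling claim 2.

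\textbf{Energy inequality for claim 1.} It remains to verify \eqref{e:Energy inequality} at every $t>0$. Take the weakly continuous representative of each $\u_j$ in $C_w([0,\infty);L^2_\sigma)$. The uniform bound in $L^\infty(L^2)$ together with the equicontinuity estimate $|\langle \u_j(t_2)-\u_j(t_1),\varphi\rangle| \le C_\varphi |t_2-t_1|^\alpha$ for $\varphi \in C_c^\infty$ divergence-free (from the equation and the uniform energy bound) gives, via Arzel\`a--Ascoli in the weak topology, $\u_j(t) \rightharpoonup \u(t)$ in $L^2$ for every $t \ge 0$ along the subsequence. Then $\|\u(t)\|_{L^2}^2 \le \liminf_j \|\u_j(t)\|_{L^2}^2$, the gradient term is handled by weak lower semicontinuity in $L^2((0,t);L^2)$, and $\|\u_{0,j}\|_{L^2}^2 \to \|\u_0\|_{L^2}^2$ by strong convergence; combining gives \eqref{e:Energy inequality}.

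\textbf{Main obstacle.} The genuinely non-routine step is the local strong compactness: one must produce a negative-Sobolev estimate on $\partial_t \u_j$ that is compatible with the only globally available uniform bounds (energy class), and feed this into Aubin--Lions on increasing compact subsets of $\R^3$. Everything else—extracting subsequences, lower semicontinuity of norms, and matching initial data—is standard once the nonlinear limit has been identified.
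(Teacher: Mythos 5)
Your proof is correct and is exactly the standard weak-compactness argument (uniform energy bounds, Aubin--Lions via a negative-Sobolev bound on $\partial_t \u_j$, identification of the nonlinear limit, and lower semicontinuity plus pointwise-in-time weak convergence for the energy inequality) that the paper itself invokes by citation rather than proving; see Remark \ref{r:Weak* stability}. No substantive gaps.
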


\begin{remark} \label{r:Weak* stability}
Proposition \ref{p:Weak* stability} is proved by standard weak compactness methods; see e.g.~\cite[p. 103]{RRS16}. It is, however, open whether the class of Leray-Hopf solutions is stable under weak$^*$ convergence (i.e. whether $\u_{0,j} \rightharpoonup \u_0$ and $\u_j \in \mathcal{N}_\nu(\u_{0,j})$ imply $\u_j \overset{*}{\rightharpoonup} \u \in \mathcal{N}_\nu(\u_0)$ for a subsequence). Seregin has studied the problem for $\u_0 = 0$ in~\cite{Ser12} and shown that one sufficient condition is that each $\u_j$ satisfies the local energy inequality and $\sup_{j \in \N} \norm{\u_{0,j}}_{L^s} < \infty$ for some $s > 3$. Stability under weak$^*$ convergence has also been shown for all $L^3$ data and so-called weak $L^3$-solutions~\cite{SS17} and Leray solutions~\cite{RS11}; for other critical classes of initial data see e.g.~\cite{AB19,BSS18} and the references contained therein.
\end{remark}

\section{The case of Euler equations} \label{s:The proof of generic non-solvability theorem}
In Theorem \ref{t:Euler} we prove the generic lack of higher integrability of weak solutions of the Euler equations (in the $L^r(0,\infty;L^s)$ scale), following the proof of~\cite[Lemma 5.1]{GKL23}. Wiedemann~\cite{Wie11} has constructed a weak solution $\u \in C_w([0,\infty);L^2_\sigma(\T^d))$ for every $\u_0 \in L^2_\sigma(\T^d)$ and $d \geq 2$ by leaning on convex integration techniques from~\cite{DLS10}. The author is not aware of similar results for $L^p_\sigma$ data, $p > 2$. For the proof of Theorem \ref{t:Euler} we recall a linear relaxation of the Euler equations (essentially) from~\cite{DLS09}. Below, $S$ takes values in $\R^{d \times d}_{\tp{sym}}$.

\begin{definition}
Let $d \in \N$, $d \geq 2$. Suppose $\u_0 \in L^2_{\tp{loc}}(\R^d)$ with $\nabla \cdot \u = 0$. We say that $(\u,S) \in L^2_{\tp{loc}}(\R^d \times [0,T)) \times L^1_{\tp{loc}}(\R^d \times [0,T))$ is a weak solution of the \emph{relaxed Euler equations}
\begin{equation} \label{e:Relaxed NS}
\partial_t \u + \nabla \cdot S + \nabla p = 0, \quad \nabla \cdot \u = 0, \quad \u(\cdot,0) = \u_0
\end{equation}
if $\nabla \cdot \u(\cdot,t) = 0$ a.e. $t \in (0,T)$ and
\[\int_0^T \int_{\R^d} (\u \cdot \partial_t \varphi + S \colon \nabla \varphi) + \int_{\R^d} \u_0 \cdot \varphi(\cdot,0) = 0\]
for all $\varphi \in C_c^\infty([0,T) \times \R^d)$ with $\nabla \cdot \varphi = 0$.
\end{definition}

If $\u \in L^2_{\tp{loc}}(\R^d \times [0,T))$ is a weak solution of Euler equations with datum $\u_0 \in L^2_{\tp{loc}}(\R^d)$, $\nabla \cdot \u_0 = 0$, then $(\u,\u \otimes \u)$ is clearly a weak solution of the relaxed Euler equations with $\u(\cdot,0) = \u_0$.

\begin{theorem} \label{t:Euler}
Let $d \in \N$, $d \geq 2$, $p \in [2,\infty]$ and $r,s \in (2,\infty]$. Suppose $\{0\} \subsetneq Z \subset L^2_{\sigma,\tp{loc}}$ is a $p$-homogeneous Banach space. If
\[\frac{1+\frac{d}{p}}{r} + \frac{d}{s} < \frac{d}{p},\]
then a weak solution $\u \in \cup_{T>0} L^r(0,T;L^s(\R^d))$ of the Euler equations only exists for a meagre set of data $\u_0 \in Z$. If $\frac{1+d/p}{r}+\frac{d}{s} > \frac{d}{p}$, an analogous result holds for $L^r(0,\infty;L^s)$.
\end{theorem}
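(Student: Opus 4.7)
The plan is to exploit the \emph{linearity} of the relaxed Euler equations~\eqref{e:Relaxed NS} together with their genuine three-parameter scaling $(\u,S) \mapsto (a\,\u(\lambda \cdot, \mu \cdot),\,a\mu\lambda^{-1} S(\lambda \cdot, \mu \cdot))$ with rescaled datum $a \u_0(\lambda \cdot)$, combined with a Baire category argument. I treat the first (local) statement in detail; the second (global) is entirely parallel, with $[0,n]$ replaced by $[0,\infty)$ and the time-horizon constraint removed.

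Suppose toward contradiction that $A := \{\u_0 \in Z : \exists\,\u \in \bigcup_{T > 0} L^r(0,T;L^s)\text{ solving Euler}\}$ is non-meagre. Since $r,s > 2$, any such Euler solution yields a relaxed pair $(\u, \u \otimes \u)$ with $\u \otimes \u \in L^{r/2}(0,T;L^{s/2})$. Set
\begin{align*}
B_n := \{\u_0 \in Z : \;&\exists\,(\u,S)\text{ solving relaxed Euler on } [0,n] \text{ with datum } \u_0,\\
&\norm{\u}_{L^r(0,n;L^s)} \leq n,\ \norm{S}_{L^{r/2}(0,n;L^{s/2})} \leq n^2\},
\end{align*}
so that $A \subset \bigcup_n B_n$. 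Each $B_n$ is closed in $Z$: for $\u_{0,j} \to \u_0$ in $Z$ with companion solutions $(\u_j,S_j)$, Banach--Alaoglu provides weak$^*$ limits in the respective Bochner spaces, lower semicontinuity preserves the norm bounds, and the \emph{linear} relaxed weak formulation passes to the limit (the initial-data term via $Z \hookrightarrow L^2_{\tp{loc}}$). Baire then furnishes $\u_0^* \in Z$, $\rho > 0$ and $n \in \N$ with $B(\u_0^*,\rho) \subset B_n$. Linearity then lets me subtract two solutions to conclude $\{\vv \in Z : \norm{\vv}_Z \leq \rho\} \subset B_n'$, the enlargement of $B_n$ with doubled norm bounds. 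This step replaces the isometric-translation-invariance hypothesis used in the Navier--Stokes analogues.

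Given an arbitrary $\u_0 \in Z \setminus \{0\}$ and $\lambda,\mu > 0$, I set $a := \rho \lambda^{d/p} / \norm{\u_0}_Z$ so that $a\u_0(\lambda \cdot)$ lies in $B_n'$ (up to the $\approx$-constant from $p$-homogeneity). Unscaling the corresponding relaxed Euler solution yields $(\u_{\lambda,\mu}, S_{\lambda,\mu})$ solving relaxed Euler on $[0,n\mu]$ with datum $\u_0$ and
\[
\norm{\u_{\lambda,\mu}}_{L^r(0,n\mu;L^s)} \lesssim \norm{\u_0}_Z \lambda^{d/s-d/p} \mu^{1/r}, \qquad
\norm{S_{\lambda,\mu}}_{L^{r/2}(0,n\mu;L^{s/2})} \lesssim \norm{\u_0}_Z \lambda^{1+2d/s-d/p} \mu^{-1+2/r}.
\]
Fix $T_* > 0$ and require $\mu \geq T_*/n$. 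Writing $\lambda = \mu^\gamma$ and letting $\mu \to \infty$, the two bounds become powers of $\mu$ with exponents $\gamma(d/s-d/p) + 1/r$ and $\gamma(1+2d/s-d/p) + (2/r-1)$. A direct case analysis on the signs of $d/s-d/p$ and $1+2d/s-d/p$ shows that an admissible $\gamma$ (making both exponents strictly negative) exists precisely when the determinant $\frac{d(r-1)}{pr} - \frac{d}{s} - \frac{1}{r}$ is positive, which is exactly $\frac{1+d/p}{r} + \frac{d}{s} < \frac{d}{p}$.

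Under the theorem's hypothesis I therefore obtain a sequence $(\u_k,S_k)$ of relaxed solutions with datum $\u_0$, defined on intervals containing $[0,T_*]$, whose $L^r(0,T_*;L^s)$ and $L^{r/2}(0,T_*;L^{s/2})$ norms tend to $0$. Passing to the limit in the relaxed weak formulation on $[0,T_*)$, the bulk terms $\int (\u_k \cdot \partial_t \varphi + S_k \colon \nabla \varphi)$ vanish, leaving $\int_{\R^d} \u_0 \cdot \varphi(\cdot,0)\,dx = 0$ for every divergence-free $\varphi \in C_c^\infty([0,T_*) \times \R^d)$; hence $\u_0 = 0$, contradicting $\u_0 \neq 0$. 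For the second statement the same scheme is used on $[0,\infty)$; the constraint $\mu \geq T_*/n$ disappears, and the solvability of the $2 \times 2$ linear system in $(\log\lambda,\log\mu)$ reduces to $\det \neq 0$, which under $\frac{1+d/p}{r} + \frac{d}{s} > \frac{d}{p}$ is again in force. I expect the main difficulty to lie in the scaling bookkeeping---verifying that the three-parameter relaxed scaling exponents align with the theorem's inequality through the determinant criterion---while the functional-analytic ingredients (Banach--Alaoglu, lower semicontinuity, and the linearity of the relaxed equation) are routine.
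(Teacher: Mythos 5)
Your argument is essentially the paper's own proof: relax Euler to the linear system \eqref{e:Relaxed NS}, apply Baire category to the closed sets of data admitting norm-bounded relaxed solutions, use linearity (rather than translation invariance) to recentre the ball at the origin, and run an anisotropic space--time scaling whose admissibility condition is exactly $\frac{1+d/p}{r}+\frac{d}{s}<\frac{d}{p}$ --- the paper fixes the single exponent $\gamma=r(\frac{d}{p}-\frac{d}{s})$ and perturbs it, while you phrase the same computation as a determinant criterion for the $(\log\lambda,\log\mu)$ system, which is a clean equivalent. One small indexing slip: with $B_n$ demanding a relaxed solution on $[0,n]$, the inclusion $A\subset\bigcup_n B_n$ fails for data whose solution exists only on $[0,T)$ with $T<1$; you should take the horizon $[0,1/n]$ (as the paper does by first fixing $T$ and using that restrictions to shorter intervals remain solutions), after which the rescaled solutions still live on $[0,\mu/n]$ with $\mu\to\infty$ and the rest of the argument is unchanged.
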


\begin{proof}
We prove the case $\frac{1+d/p}{r} + \frac{d}{s} < \frac{d}{p}$. Note that it suffices to show, for every fixed $T > 0$, that a weak solution $(\u,S) \in L^r(0,T;L^s) \times L^{r/2}(0,T;L^{s/2})$ of \eqref{e:Relaxed NS} only exists for a meagre set of data.

Let $T > 0$. Seeking a contradiction, suppose that for a non-meagre set of data, a solution $(\u,S) \in L^r(0,T;L^s) \times L^{r/2}(0,T;L^{s/2})$ exists. Thus, by definition, one of the (manifestly closed) sets
\[X_M \defeq \{\u_0 \in X \colon \exists (\u,S) \text{ solving \eqref{e:Relaxed NS} with} \norm{\u}_{L^r(0,T;L^s)} + \norm{S}_{L^{r/2}(0,T;L^{s/2})} \leq M\}\]
contains a closed ball $\bar{B}(\vv_0,\epsilon) \subset Z$. By linearity, we may assume that $\bar{\vv}_0 = 0$ (by possibly enlarging $M$). By linearity again, for some $C > 0$, for every $\u_0 \in Z$ there exists a solution $(\u,S)$ of \eqref{e:Relaxed NS} with
\[\norm{\u}_{L^r(0,T;L^s)} + \norm{S}_{L^{r/2}(0,T;L^{s/2})} \leq C \norm{\u_0}_Z.\]

Fix now $\u_0 \in Z \setminus \{0\}$ and define $\tilde{\u}_0 \defeq \u_0(\lambda \cdot)$ so that $\norm{\tilde{\u}_0}_Z = \lambda^{-d/p} \norm{\u_0}_Z$. Select a solution $(\u,S)$ of the relaxed Euler equations with $\u(\cdot,0) = \tilde{\u}_0$ such that
\[\norm{\u}_{L^r(0,T;L^s)} + \norm{S}_{L^{r/2}(0,T;L^{s/2})}
\leq C \norm{\tilde{\u}_0}_Z = C \lambda^{-d/p} \norm{\u_0}_Z.\]
Let $\gamma > 0$, to be chosen later. Now
\[\u_\lambda(\x,t) \defeq \u \left( \frac{\x}{\lambda}, \frac{t}{\lambda^\gamma} \right), \qquad S_\lambda(\x,t) \defeq \lambda^{1-\gamma} S \left( \frac{\x}{\lambda}, \frac{t}{\lambda^\gamma} \right)\]
form a solution of \eqref{e:Relaxed NS} on $\R^d \times [0,\lambda^\gamma T)$ with $\u_\lambda(\cdot,0) = \u_0$. Furthermore,
\begin{align*}
& \norm{\u_\lambda}_{L^r(0,\lambda^\gamma T;L^s)} = \lambda^{\frac{\gamma}{r} + \frac{d}{s}} \norm{\u}_{L^r(0,T;L^s)} \leq C \lambda^{\frac{\gamma}{r} + \frac{d}{s} - \frac{d}{p}} \norm{\u_0}_Z, \\
& \norm{S_\lambda}_{L^{r/2}(0,\lambda^\gamma T;L^{s/2})} = \lambda^{\frac{2\gamma}{r} + \frac{2d}{s} + 1 - \gamma} \norm{S}_{L^{r/2}(0,T;L^{s/2})} \leq C \lambda^{\frac{2\gamma}{r} + \frac{2d}{s} + 1 - \gamma - \frac{d}{p}} \norm{\u_0}_Z.
\end{align*}
If $r < \infty$, then $\gamma \defeq r (\frac{d}{p}-\frac{d}{s}) > 0$ by assumption and 
\begin{equation} \label{e:Inequality on exponents}
\frac{2\gamma}{r} + \frac{2d}{s} + 1 - \gamma - \frac{d}{p} = \frac{(1-r)d}{p} + \frac{rd}{s} + 1 < 0.
\end{equation}
Therefore, making $\gamma$ slightly smaller, we get both \eqref{e:Inequality on exponents} and $\frac{\gamma}{r}+\frac{d}{s}-\frac{d}{p} < 0$. Letting $\lambda \to \infty$ we conclude that $(0,0)$ is a weak solution of \eqref{e:Relaxed NS} with datum $\u_0 \in Z \setminus \{0\}$. This gives the sought contradiction when $r < \infty$. If $r = \infty$, the same argument works for any $\gamma > \frac{2d}{s}+1-\frac{d}{p}$.

The second claim has an entirely analogous proof, but $(0,T)$ is replaced by $(0,\infty)$, $\gamma = r(\frac{d}{p}-\frac{d}{s})$ need not be positive and we let $\lambda \to 0$.
\end{proof}

\begin{remark}
A similar relaxation of the Navier-Stokes equations shows that if $\frac{5}{2r}+\frac{3}{s} < \frac{3}{2}$, then not only weak solutions but also very weak solutions in $\cup_{T>0} L^r(0,T;L^s)$ are ruled out for a Baire generic datum $\u_0 \in L^2_\sigma$. The improvement relative to Theorem \ref{t:Relaxed} is, however, so marginal that we omit the details.
\end{remark}

\section{The proof of Theorem \ref{t:Relaxed} and corollaries} \label{s:The proof of relaxed theorem}

\subsection{The proof of Theorem \ref{t:Relaxed}}
For the proof of Theorem \ref{t:Relaxed} we formulate a lemma which was used (even if not stated explicitly) in~\cite{GKL23}.

\begin{lemma} \label{l:GKL23}
Suppose $X$ is a normed space and $\mathcal{T}$ is a topology on $X$. Suppose that $\{\tau_a\}_{a \in \R}$ is a one-parameter group of isometries $X \to X$. If a sequentially $\mathcal{T}$-closed set $Z \subset X$ contains a ball $\bar{B}(x,r)$, $\tau_a x \to 0$ in $\mathcal{T}$ as $a \to \infty$ and $\tau_a Z \subset Z$ for every $a \in \R$, then $\bar{B}(0,r) \subset Z$.
\end{lemma}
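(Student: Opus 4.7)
The plan is to exploit the translation invariance of $Z$ to carry the entire ball $\bar{B}(x,r)$ arbitrarily close to the origin, and then cash in the sequential $\mathcal{T}$-closedness at the limit point. So I would fix an arbitrary $y \in \bar{B}(0,r)$ and try to exhibit a sequence in $Z$ that $\mathcal{T}$-converges to $y$.

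The natural candidate sequence is $y_a \defeq y + \tau_a x$. Since each $\tau_a$ is an isometry (and, as a member of a one-parameter group, linear), I have $\tau_a \bar{B}(x,r) = \bar{B}(\tau_a x,r)$, and the hypothesis $\tau_a Z \subset Z$ gives
\[
\bar{B}(\tau_a x,r) \;=\; \tau_a \bar{B}(x,r) \;\subset\; \tau_a Z \;\subset\; Z.
\]
Because $\norm{y_a - \tau_a x} = \norm{y} \leq r$, this already shows $y_a \in Z$ for every $a \in \R$.

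It then remains to produce the $\mathcal{T}$-limit. By hypothesis $\tau_a x \to 0$ in $\mathcal{T}$ as $a \to \infty$. Under the (harmless, and implicit in the intended applications) assumption that addition in $X$ is sequentially $\mathcal{T}$-continuous in the second variable — which is automatic in the paper's setting, where $X$ is a dual Banach space and $\mathcal{T}$ is the weak-$*$ topology — the translates $y_a = y + \tau_a x$ converge to $y$ in $\mathcal{T}$ along any sequence $a_n \to \infty$. Sequential $\mathcal{T}$-closedness of $Z$ then forces $y \in Z$, and since $y \in \bar{B}(0,r)$ was arbitrary, $\bar{B}(0,r) \subset Z$, as desired.

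The only step that requires any care is the continuity-of-translation point in the last paragraph; if $\mathcal{T}$ were taken to be truly arbitrary the conclusion could fail, so I would either invoke the weak-$*$ structure explicitly or phrase the lemma as requiring that translation by a fixed vector be sequentially $\mathcal{T}$-continuous. Everything else is a direct consequence of the isometry and invariance hypotheses.
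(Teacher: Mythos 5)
Your proof is correct and is essentially the paper's own argument: the paper obtains $y+\tau_a x\in Z$ by writing it as $\tau_a(x+\tau_{-a}y)$ with $x+\tau_{-a}y\in\bar{B}(x,r)$, which is exactly your observation that $\tau_a\bar{B}(x,r)=\bar{B}(\tau_a x,r)\subset Z$ contains $y+\tau_a x$. Your caveat that one needs sequential $\mathcal{T}$-continuity of translation by a fixed vector (automatic for the weak-$*$ topologies used in the applications) is a fair point that the paper's one-line conclusion also leaves implicit.
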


\begin{proof}
Let $\norm{y}_X \leq r$. For every $a \in \R$ we have $x+\tau_{-a} y \in \bar{B}(x,r) \subset Z$ and, therefore, $y + \tau_a x = \tau_a (x + \tau_{-a} y) \in Z$. By the assumptions, $y \in Z$.
\end{proof}

Typically, and below, $\tau_a$ are translations $\tau_a \u_0(x) \defeq \u_0(x-ax_0)$, $x_0 \in \R^3 \setminus \{0\}$. Lemma \ref{l:GKL23} says, informally, that properties that are stable under translations (of $\R^3$) and weak convergence can be transferred from a ball $\bar{B}(\u_0,\epsilon) \subset X$ to $\bar{B}(0,\epsilon) \subset X$. It is important to note that translations of both $\R^3$ and $X$ are at play here. The assumptions on $\mathcal{T}$ and $\{\tau_a\}_{a \in \R}$ are similar in spirit to those of a \emph{group of dislocations} in concentration compactness theory~\cite{ST02}.

\begin{proof}[Proof of \eqref{i:Non-meagre} $\Leftrightarrow$ \eqref{i:A priori}] The direction "$\Leftarrow$" being obvious, we prove the converse. Suppose that for some $\tilde{\nu} > 0$ and a non-meagre set $Y \subset L^2_\sigma \cap Z$ there exists $\u^{\tilde{\nu}} \in \mathcal{W}_{\tilde{\nu}}(\u_0) \cap L^r(0,\infty;X)$. We write $Y = \cup_{M=1}^\infty Y_M$, where each set
\begin{align*}
Y_M
&\defeq \{\u_0 \in L^2_\sigma \cap Z: \exists \u^{\tilde{\nu}} \in \mathcal{W}_{\tilde{\nu}}(\u_0) \cap L^r(0,\infty;X) \text{ with } \\
& \norm{\u^{\tilde{\nu}}}_{L^\infty(0,\infty;L^2)} + \norm{\nabla \u^{\tilde{\nu}}}_{L^2(0,\infty;L^2)} + \norm{\u^{\tilde{\nu}}}_{L^r(0,\infty;X)} \leq M\}
\end{align*}
is weakly$^*$ sequentially closed by Proposition \ref{p:Weak* stability}. By the definition of a meagre set, some $Y_M$ contains a closed ball $\bar{B}(\vv_0,\epsilon) \subset L^2_\sigma \cap Z$. By Proposition \ref{p:Weak* stability}, $Y_M$ satisfies the assumptions of Lemma \ref{l:GKL23} for $\tau_a \u_0(x) \defeq \u_0(x - ae)$, $e \in \R^3 \setminus \{0\}$. Thus $\bar{B}(0,\epsilon) \subset Y_M$.

Fix now $\u_0 \in L^2_\sigma \cap Z$. We set $\lambda \defeq \max\{(\norm{\u_0}_{L^2}/\epsilon)^2, (\norm{\u_0}_Z/\epsilon)^2\}$ and denote $\tilde{\u}_0 \defeq \lambda \u_0(\lambda \cdot)$, so that $\max\{\|\tilde{\u}_0\|_{L^2},\|\tilde{\u}_0\|_Z\} = \epsilon$. We choose $\u \in \W_{\tilde{\nu}}(\tilde{\u}_0)$ with $\norm{\u}_{L^\infty(0,\infty;L^2)} + \norm{\nabla \u}_{L^2(0,\infty;L^2)} + \norm{\u}_{L^r(0,\infty;X)} \leq M$, so that the mapping $\u_\lambda(x,t) \defeq \u(\frac{x}{\lambda},\frac{t}{\lambda^2})/\lambda$ satisfies $\u_\lambda \in \W_{\tilde{\nu}}(\u_0)$ with
\[\norm{\u_\lambda}_{L^\infty(0,\infty;L^2)} + \norm{\nabla \u_\lambda}_{L^2(0,\infty;L^2)} \leq C_{M,\epsilon} \max \left\{ \norm{\u_0}_{L^2}, \norm{\u_0}_Z \right\}\]
and
\begin{align*}
\norm{\u_\lambda}_{L^r(0,\infty;X)}
&= \lambda^{\frac{2}{r} + \frac{3}{s} - 1} \norm{\u}_{L^r(0,\infty;X)} \\
&\leq C_{M,\epsilon} \max \left\{\norm{\u_0}_{L^2}^{2 \left( \frac{2}{r} + \frac{3}{s} - 1 \right)}, \norm{\u_0}_Z^{2 \left( \frac{2}{r} + \frac{3}{s} - 1 \right)} \right\}.
\end{align*}

It remains to prove the estimate \eqref{e:Supercritical a priori} for all $\nu > 0$. Let $\nu > 0$ and $\u_0 \in (L^2_\sigma \cap Z) \setminus \{0\}$. Denote $\lambda = \frac{\nu}{\tilde{\nu}} > 0$ and $\tilde{\u}_0 \defeq \u_0(\lambda \cdot)$, and select $\u \in \W_{\tilde{\nu}}(\tilde{\u}_0)$ such that $\norm{\u}_{L^\infty(0,\infty;L^2)} + \sqrt{\tilde{\nu}} \norm{\nabla \u}_{L^2(0,\infty;L^2)} \leq C_{M,\epsilon} \max \{\|\tilde{\u}_0\|_{L^2}, \|\tilde{\u}_0\|_Z\}$ and $\norm{\u}_{L^r(0,\infty;X)} \leq C \max\{\|\tilde{\u}_0\|_{L^2}^{2 \left( \frac{2}{r} + \frac{3}{s} - 1 \right)}, \norm{\u_0}_Z^{2 \left( \frac{2}{r} + \frac{3}{s} - 1 \right)}\}$. Now
\[\u_\lambda(x,t) \defeq \u \left( \frac{x}{\lambda}, \frac{t}{\lambda} \right)\]
satisfies $\u_\lambda \in \W_\nu(\u_0)$ with
\begin{align*}
\norm{\u_\lambda}_{L^\infty(0,\infty;L^2)} + \sqrt{\nu} \norm{\nabla \u_\lambda}_{L^2(0,\infty;L^2)}
&= \lambda^{\frac{3}{2}} (\norm{\u}_{L^\infty(0,\infty;L^2)} + \sqrt{\tilde{\nu}} \norm{\nabla}_{L^2(0,\infty;L^2)}) \\
&\leq C \lambda^{\frac{3}{2}} \max \left\{\|\tilde{\u}_0\|_{L^2}, \|\tilde{\u}_0\|_Z \right\} \\
&= C \max \left\{ \norm{\u_0}_{L^2}, \norm{\u_0}_Z \right\}
\end{align*}
 and
\begin{align*}
\norm{\u_\lambda}_{L^r(0,\infty;X)} &= \lambda^{\frac{1}{r} + \frac{3}{s}} \norm{\u}_{L^r(0,\infty;X)} \\
&\leq C \lambda^{\frac{1}{r} + \frac{3}{s}} \max \left\{ \norm{\tilde{\u}_0}_{L^2}^{2 \left( \frac{2}{r} + \frac{3}{s} - 1 \right)}, \norm{\tilde{\u}_0}_Z^{2 \left( \frac{2}{r} + \frac{3}{s} - 1 \right)} \right\} \\
&= C \left( \frac{\tilde{\nu}}{\nu} \right)^{3 \left( \frac{5}{3r} + \frac{2}{s} - 1 \right)} \max \left\{ \norm{\u_0}_{L^2}^{2 \left( \frac{2}{r} + \frac{3}{s} - 1 \right)}, \norm{\u_0}_Z^{2 \left( \frac{2}{r} + \frac{3}{s} - 1 \right)} \right\},
\end{align*}
which gives \eqref{e:Minimum for energy}--\eqref{e:Supercritical a priori}.
\end{proof}

\begin{proof}[Proof of \eqref{i:A priori} $\Rightarrow$ \eqref{i:Maximum}]
Suppose, by way of contradiction, that $\frac{5}{3r}+\frac{2}{s} < 1$. Given $\u_0 \in (L^2_\sigma \cap Z) \setminus \{0\}$ we fix a sequence $\nu_j \to 0$ and $\u^{\nu_j} \in \W_{\nu_j}(\u_0)$ satisfying \eqref{e:Minimum for energy}--\eqref{e:Supercritical a priori}. After passing to a subsequence, $\u_j \overset{*}{\rightharpoonup} \u$ in $L^\infty(0,\infty;L^2_\sigma)$ and $\u_j \otimes \u_j \overset{*}{\rightharpoonup} S$ in $L^\infty(0,\infty;\mathbf{M})$, and $(\u,S) = (0,0)$ by \eqref{e:Supercritical a priori}. Thus $(0,0)$ solves \eqref{e:Relaxed NS}, which yields a contradiction.
\end{proof}

\begin{proof}[Proof of \eqref{i:Non-meagre} $\Leftarrow$ \eqref{i:Uniform in nu} $\Rightarrow$ \eqref{i:A priori uniform in nu}]
We assume that \eqref{i:Uniform in nu} holds and prove \eqref{i:Non-meagre}. We write the non-meagre set of $\u_0 \in L^2_\sigma \cap Z$ such that $\limsup_{\eta \to 0} \inf_{\u^\nu \in \W_\nu(\u_0)} (\norm{\u^\nu}_{L^\infty(0,\infty;L^2)} + \sqrt{\nu} \norm{\u^\nu}_{L^2(0,\infty:L^2)} + \norm{\u^\nu}_{L^r(0,\infty;X)}) < \infty$ as $\cup_{M=1}^\infty Y_M$, where
\begin{align*}
Y_M
&\defeq \{\u_0 \in L^2_\sigma \cap Z: \forall \nu \in (0,1/M] \, \exists \u^\nu \in \W_\nu(\u_0) \cap L^r(0,\infty;X) \\
&\text{ with } \norm{\u^\nu}_{L^\infty(0,\infty;L^2)} + \sqrt{\nu} \norm{\u^\nu}_{L^2(0,\infty:L^2)} + \norm{\u^\nu}_{L^r(0,\infty;X)} \leq M \}.
\end{align*}
Every $Y_M$ is closed by Proposition \ref{p:Weak* stability}, and therefore some $Y_M$ contains a closed ball $\bar{B}(\vv_0,\epsilon)$, so that \eqref{i:Non-meagre} holds. The direction \eqref{i:Uniform in nu} $\Rightarrow$ \eqref{i:A priori uniform in nu} is proved just like \eqref{e:Minimum for energy}--\eqref{e:Supercritical a priori} in the proof of \eqref{i:Non-meagre} $\Leftrightarrow$ \eqref{i:A priori}.
\end{proof}

\begin{proof}[Proof of \eqref{i:A priori uniform in nu} $\Rightarrow$ \eqref{i:Exponents}]
Clearly \eqref{i:A priori uniform in nu} implies \eqref{i:Non-meagre}. Therefore,
since \eqref{i:Non-meagre} $\Rightarrow$ \eqref{i:Maximum}, we have $\frac{5}{3r} + \frac{2}{s} \geq 1$. If $\frac{5}{3r} + \frac{2}{s} > 1$, we get a contradiction by letting $\tilde{\nu} \to 0$ in \eqref{e:Uniform a priori}.



\end{proof}

\begin{proof}[Proof of \eqref{i:A priori uniform in nu} $\Rightarrow$ \eqref{i:Uniform in nu}]
Since \eqref{i:A priori uniform in nu} implies \eqref{i:Exponents}, \eqref{e:Minimum uniformly}--\eqref{e:Uniform a priori} imply \eqref{i:Uniform in nu}.
\end{proof}


\subsection{The proof of Corollary \ref{c:Relaxed}} \label{ss:The proof of Besov corollary}
We then prove Corollary \ref{c:Relaxed}. We cover the more general case of $L^r(0,\infty;\dot{B}_{r,\infty}^{\beta_r})$, $\frac{5}{3} \leq r \leq 3$, where $\beta_r = \frac{11-3r}{2r}$.

\begin{proof}[Proof of Corollary \ref{c:Relaxed}]
We prove the direction \eqref{i:Besov existence in L2sigma} $\Rightarrow$ \eqref{i:Besov a priori estimate}, the converse being obvious. Suppose that for some $\nu > 0$ and a non-meagre set of data $\u_0 \in L^2_\sigma \cap Z$ there exists $\u \in \NN_\nu(\u_0) \cap \cup_{T>0} L^r(0,\infty;\dot{B}_{r,\infty}^{\beta_r})$. Thus, one of the closed sets $Y_M \defeq \{\u_0 \in L^2_\sigma \cap Z: \exists \u \in \NN_\nu(\u_0) \text{ with } \norm{\u}_{L^r(0,\infty;\dot{B}_{r,\infty}^{\beta_r})} \leq M\}$ contains a closed ball $\bar{B}(\vv_0,\epsilon)$. By using Lemma \ref{l:GKL23} as before and the fact that $L^r(0,T;\dot{B}_{r,\infty}^{\beta_r})$-regular weak solutions satisfy the energy equality, $\bar{B}(0,\epsilon) \subset Y_M$. By Theorem \ref{t:Relaxed}, we obtain the uniform-in-$\nu$ bound $\norm{\u^\nu}_{L^r(0,\infty;\dot{B}_{r,\infty}^{\beta_r})}^3 \leq C \max\{\norm{\u_0}_{L^2}^2,\norm{\u_0}_Z^2\}$.

It remains to show that for every $\u_0 \in L^2_\sigma \cap Z$ there exist $\nu_j \to 0$ such that $\u^{\nu_j}$ specified above converges weakly$^*$ in $L^\infty(0,\infty;L^2_\sigma)$ to a weak solution $\u$ of the Euler equations with $\norm{\u}_{L^r(0,\infty;\dot{B}_{r,\infty}^{\beta_r})}^3 \leq C \max\{\norm{\u_0}_{L^2}^2,\norm{\u_0}_Z^2\}$.

The existence of the weak$^*$ limit $\u \in L^\infty(0,\infty;L^2_\sigma)$ is immediate. We show that $\u$ solves the Euler equations. Given $\varphi \in C_c^\infty(\R^3 \times [0,\infty))$ with $\nabla \cdot \varphi = 0$ it suffices to show, for a subsequence (of an arbitrary subsequence), that $\int_{\R^3 \times [0,\infty)} \u_j \otimes \u_j: \nabla \varphi \to \int_{\R^3 \times [0,\infty)} \u \otimes \u: \nabla \varphi$. Fix $T > 0$ and a compact set $K \subset \R^3$ such that $\tp{supp}(\varphi) \subset K \times [0,T]$, and let $q \in [1,\frac{6r}{3r-5})$. We obtain the uniform bound $\sup_{j \in \N} \norm{(\nabla^T \varphi) \u_j}_{L^r(0,T;B_{r,\infty}^{\beta_r}(K))} < \infty$, and by Corollary \ref{c:Besov}, $B_{r,\infty}^{\beta_r}(K)$ is compactly embedded into $L^q$. By the Aubin-Lions lemma, $(\nabla^T \varphi) \u_j \to (\nabla^T \varphi) \u$ in $L^3(0,T;L^q)$. Consequently, $\u$ is a weak solution of the Euler equations in $\R^3 \times [0,\infty)$. The estimate $\norm{\u}_{L^r(0,\infty;\dot{B}_{r,\infty}^{\beta_r})}^3 \leq C \max\{\norm{\u_0}_{L^2}^2,\norm{\u_0}_Z^2\}$ follows from Lemma \ref{l:Fatou}.

We finish the proof of the corollary by proving the direction \eqref{i:Besov local existence} $\Rightarrow$ \eqref{i:Besov existence in L2sigma}. Assuming \eqref{i:Besov local existence}, we argue as before to find $\epsilon,T,M>0$ such that
\[\bar{B}(0,\epsilon) \subset \{\u_0 \in Z:\exists \u \in \NN_\nu(\u_0) \text{ with } \norm{\u}_{L^r(0,T;\dot{B}_{r,\infty}^{\beta_r})} \leq M\}.\] Iterating this, using the fact that $\norm{\u(t)}_{L^2} \leq \norm{\u_0}_{L^2} \leq \epsilon$ for all $t > 0$ and applying Remark \ref{r:Besov integrability remark} we obtain $\u \in \NN_\nu(\u_0) \cap L^r(0,\infty;\dot{B}_{r,\infty}^{\beta_r})$ for an arbitrary $\u_0 \in \bar{B}(0,\epsilon)$. Here, we obtain a Leray-Hopf solution with datum $\u_0$ by gluing together at $t=T$ the vector fields $\vv \in \NN_\nu(\u_0) \cap L^r(0,T;\dot{B}_{r,\infty}^{\beta_r})$ and $(x,t) \mapsto \w(x,t-T)$, where $\w \in \NN_\nu(\u(T)) \cap L^r(0,T;\dot{B}_{r,\infty}^{\beta_r})$ (see e.g.~\cite[Lemma 2.4]{Gal00}).
\end{proof}

\subsection{$L^r(0,T;L^s)$ integrability and the energy equality}
We then discuss the $L^r(0,T;L^s)$ integrability of Leray-Hopf solutions. First, by following the argument at~\cite[p. 641]{FS09} essentially verbatim one obtains the following result:
\begin{proposition} \label{p:No higher integrability}
Let $\nu > 0$ and suppose $r,s \in (2,\infty]$ satisfy $1 \leq \frac{2}{r}+\frac{3}{s} < \frac{3}{2}$. If $\u_0 \in L^2_\sigma$ and there exists $\u \in \W_\nu(\u_0) \cap \cup_{T > 0} L^r(0,T;L^s)$, then $\u_0 \in \dot{B}_{s,\tilde{r}}^{-2/\tilde{r}}$, where $\frac{2}{\tilde{r}} = \frac{4}{r}+\frac{3}{s}-1$.
\end{proposition}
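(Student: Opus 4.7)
By the heat-kernel characterisation \eqref{e:Heat characterisation of Besov spaces} (applied to $e^{\nu t\Delta}$ via the change of variables $t\mapsto \nu t$, which only contributes a factor of $\nu^{-1/\tilde r}$), it suffices to prove $e^{\nu t\Delta}\u_0\in L^{\tilde r}(0,\infty;L^s)$. Since $\u\in\W_\nu(\u_0)$ is a weak solution with $\u\otimes\u\in L^{r/2}(0,T;L^{s/2})$, a standard test-function argument---testing \eqref{e:Weak solution} against $\varphi(x,\tau)=\chi(\tau)\,e^{\nu(t-\tau)\Delta}\psi(x)$ with $\psi\in\S$ and a smooth cutoff $\chi$ and passing to the limit after mollification---produces the Duhamel identity
\begin{equation*}
e^{\nu t\Delta}\u_0 \;=\; \u(t) \;+\; \int_0^t e^{\nu(t-\tau)\Delta}\,\mathbb{P}\nabla\cdot(\u\otimes\u)(\tau)\,d\tau \qquad \text{for a.e.\ } t>0,
\end{equation*}
so it remains to control each right-hand term in $L^{\tilde r}(0,\infty;L^s)$.

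\textbf{Duhamel term.} The semigroup estimate $\|e^{\nu t\Delta}\mathbb{P}\nabla\cdot f\|_{L^s}\lesssim(\nu t)^{-\frac12-\frac{3}{2s}}\|f\|_{L^{s/2}}$ reduces this to a time convolution with kernel $t^{-\alpha}$, $\alpha=\tfrac12+\tfrac{3}{2s}$, applied to $\|\u\otimes\u\|_{L^{s/2}}\in L^{r/2}(0,T)$ (extended by zero). The Hardy--Littlewood--Sobolev relation $1+\tfrac{1}{\tilde r}=\alpha+\tfrac{2}{r}$ is exactly $\tfrac{2}{\tilde r}=\tfrac{4}{r}+\tfrac{3}{s}-1$, which fixes $\tilde r$ as in the statement and yields
\begin{equation*}
\Bigl\|\int_0^t e^{\nu(t-\tau)\Delta}\mathbb{P}\nabla\cdot(\u\otimes\u)(\tau)\,d\tau\Bigr\|_{L^{\tilde r}(0,\infty;L^s)} \;\lesssim_\nu\; \|\u\|_{L^r(0,T;L^s)}^{2} \;<\; \infty.
\end{equation*}
When the kernel is too singular for bare HLS (i.e.\ $s\le 3$, where $\alpha\ge 1$), I would substitute the classical parabolic $L^p_tL^q_x$ maximal regularity for the Stokes operator, which holds throughout the scaling-admissible range by Calder\'on--Zygmund theory.

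\textbf{Linear term and tail.} On $(0,T)$ one has $\u\in L^r(0,T;L^s)\hookrightarrow L^{\tilde r}(0,T;L^s)$ by H\"older, using that $\tfrac{2}{r}+\tfrac{3}{s}\ge 1$ is equivalent to $\tilde r\le r$. On $(T,\infty)$ I would apply the $L^2\to L^s$ heat decay $\|e^{\nu t\Delta}\u_0\|_{L^s}\lesssim(\nu t)^{-\frac32(1/2-1/s)}\|\u_0\|_{L^2}$, which is $L^{\tilde r}$-integrable on $(T,\infty)$ precisely when $\tfrac{2}{r}+\tfrac{3}{s}<\tfrac{5}{4}$. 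This covers the substantive range of the proposition, since by Proposition \ref{c:Incompatible Lp} the target space $\dot B_{s,\tilde r}^{-2/\tilde r}$ contains $L^2_\sigma$ only meagrely there; in the complementary range $\tfrac{5}{4}\le\tfrac{2}{r}+\tfrac{3}{s}<\tfrac{3}{2}$, I would close the tail instead by interpolating the Leray--Hopf bound $\u\in L^\infty(T,\infty;L^2)\cap L^2(T,\infty;\dot H^1)$ to place $\u\otimes\u$ in a Leray-scaling class $L^{\rho_0}(T,\infty;L^{\sigma_0})$ with $\tfrac{2}{\rho_0}+\tfrac{3}{\sigma_0}=3$ and re-applying the bilinear estimate above.

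\textbf{Main obstacle.} The trickiest step is synthesising the tail estimates across the borderline $\tfrac{2}{r}+\tfrac{3}{s}=\tfrac{5}{4}$, where the free heat-semigroup decay from generic $L^2$ data degenerates exactly at the $\tilde r$-integrability threshold; past this threshold one must extract the missing integrability from the quadratic Duhamel piece via the Leray-scaling structure. Compared to this, the passage to the Duhamel identity for a non-smooth weak solution is routine (time--space mollification) and the HLS step is textbook.
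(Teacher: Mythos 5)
The paper gives no written proof of this proposition; it obtains it ``by following the argument at [FS09, p.~641] essentially verbatim'', and that argument is exactly your strategy: Duhamel formula, $L^{s/2}\to L^s$ semigroup smoothing of $\mathbb{P}\nabla\cdot(\u\otimes\u)$, a Hardy--Littlewood--Sobolev convolution in time (whose scaling relation indeed produces $\tfrac{2}{\tilde r}=\tfrac4r+\tfrac3s-1$), and the heat-kernel characterisation of $\dot B^{-2/\tilde r}_{s,\tilde r}$. Your treatment of the interval $(0,T)$ in the main range $s>3$, and of the linear term there via $\tilde r\le r$, is correct.

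Two steps do not close as written. First, the tail over $(T,\infty)$. Farwig--Sohr work on a bounded domain, where the Stokes semigroup decays exponentially and the Besov condition reduces to a finite-time integral; on $\R^3$ the conclusion is equivalent to $e^{t\Delta}\u_0\in L^{\tilde r}(0,\infty;L^s)$, whose large-time part is governed by the low frequencies of $\u_0$. As you note, the free $L^2\to L^s$ decay handles this only when $\tfrac2r+\tfrac3s<\tfrac54$. Your proposed repair for $\tfrac54\le\tfrac2r+\tfrac3s<\tfrac32$ does not treat the linear term: on $(T,\infty)$ the energy class gives $\u\in L^\rho(T,\infty;L^s)$ only with $\tfrac2\rho+\tfrac3s=\tfrac32$ (and only for $s\le6$), and one checks $\rho>\tilde r$ precisely when $\tfrac2r+\tfrac3s>\tfrac54$; on an infinite interval one cannot lower the time exponent. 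More fundamentally, in that range membership in $\dot B^{-2/\tilde r}_{s,\tilde r}$ imposes a low-frequency summability condition not implied by $\u_0\in L^2$, while the hypothesis carries information only on a finite time interval (small $H^{10}_\sigma$ data with slowly decaying Littlewood--Paley blocks as $j\to-\infty$ have global strong solutions in every $L^r(0,T;L^s)$, $T<\infty$, yet violate that condition). So your argument genuinely proves only the range $\tfrac2r+\tfrac3s<\tfrac54$ --- which is, to be fair, the only range the paper subsequently uses. Second, for $2<s<3$ (a nonempty part of the admissible range, forcing $r>4$) the kernel exponent $\alpha=\tfrac12+\tfrac3{2s}$ exceeds $1$, and neither HLS nor plain maximal regularity yields the off-diagonal bound $L^{r/2}_tL^{s/2}_x\to L^{\tilde r}_tL^s_x$ with $\tilde r<r/2$: maximal regularity plus Sobolev lands in $L^{r/2}_tL^{q^*}_x$ with $q^*<s$. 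That case needs an additional interpolation with the energy class (or a restriction to $s\ge3$); ``classical parabolic maximal regularity'' alone is not the missing lemma.
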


Note that if $\frac{2}{r}+\frac{3}{s}=1$, then $\tilde{r} = r$; in this case the converse $\u_0 \in L^2_\sigma \cap \dot{B}_{s,r}^{-2/r}$ $\Rightarrow$ $\exists \u \in \NN_\nu(\u_0) \cap \cup_{T>0} L^r(0,T;L^s)$ is proved in~\cite{FS09,FSV09}. Furthermore, whenever $\frac{2}{r}+\frac{3}{s} < \frac{5}{4}$, we have $\frac{2}{\tilde{r}} + \frac{3}{s} < \frac{3}{2}$ and so $\u \in \W_\nu(\u_0) \cap \cup_{T>0} L^r(0,T;L^s)$ only exists for a meagre set of $L^2_\sigma$ data. However, Proposition \ref{p:No higher integrability} leaves open, for instance, the case of $\cup_{T>0} L^4(0,T;L^4)$.

Theorem \ref{t:Relaxed} rules out $L^4(0,T;L^4)$ integrability (and numerous other conditions that appear in the literature such as Shinbrot's condition). We formulate the following more general result:

\begin{corollary} \label{c:Navier-Stokes}
Let $r,s \in (\frac{4}{3},\infty]$ and $\nu_0 > 0$. If $\frac{5}{3r}+\frac{2}{s} < 1$, then $\u \in \W_{\nu_0}(\u_0) \cap \cup_{T>0} L^r(0,T;L^s)$ only exists for a meagre set of data $\u_0 \in L^2_\sigma$.

If $1 \leq \frac{5}{3r}+\frac{2}{s} < 1+\frac{1}{3r}$, then $\u \in \W_{\nu_0}(\u_0) \cap \cup_{T>0} L^r(0,T;L^s)$ exists for a non-meagre set of $L^2_\sigma$ data if and only if for every $\nu > 0$ and $\u_0 \in L^2_\sigma$ there exists $\u \in \W_\nu(\u_0)$ with $\norm{\u}_{L^\infty(0,\infty;L^2)} + \sqrt{\nu} \norm{\nabla \u}_{L^2(0,\infty;L^2)} \leq C \norm{\u_0}_{L^2}$ and
\[\norm{\u}_{L^r(0,\infty;L^s)} \leq C \nu^{-3 \left( \frac{5}{3r} + \frac{2}{s} - 1 \right)}  \norm{\u_0}_{L^2}^{2 \left( \frac{2}{r} + \frac{3}{s} - 1 \right)}.\]
\end{corollary}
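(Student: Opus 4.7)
Both parts follow from Theorem~\ref{t:Relaxed}'s Baire-plus-two-parameter-scaling machinery, applied with $X=L^s$ and $Z=L^2_\sigma$ (which is $2$-homogeneous, isometrically translation invariant, and has the Fatou property with respect to itself). The only adaptation is to the local-in-time class $\bigcup_{T>0}L^r(0,T;L^s)$ rather than $L^r(0,\infty;L^s)$. I decompose the set of $\u_0\in L^2_\sigma$ admitting some $\u\in\W_{\nu_0}(\u_0)\cap\bigcup_{T>0}L^r(0,T;L^s)$ as $\bigcup_{M,N\in\N}Y_{M,N}$, where $Y_{M,N}$ imposes $\norm{\u}_{L^\infty(0,\infty;L^2)}+\sqrt{\nu_0}\norm{\nabla\u}_{L^2(0,\infty;L^2)}+\norm{\u}_{L^r(0,1/N;L^s)}\le M$. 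Each $Y_{M,N}$ is sequentially closed by Proposition~\ref{p:Weak* stability}, so by Baire some $Y_{M,N}$ contains $\bar B(\vv_0,\epsilon)$; Lemma~\ref{l:GKL23} applied with translations $\tau_a\u_0(x)=\u_0(x-ae)$ recenters this to $\bar B(0,\epsilon)\subset Y_{M,N}$. For $\u_0\in L^2_\sigma\setminus\{0\}$ and target viscosity $\nu>0$, set $\alpha=(\nu/\nu_0)^3(\epsilon/\norm{\u_0}_{L^2})^2$ and $\beta=(\nu/\nu_0)^2(\epsilon/\norm{\u_0}_{L^2})^2$, so that $\hat\u_0(y)\defeq\alpha^{-1}\u_0(y/\beta)$ has $\|\hat\u_0\|_{L^2}=\epsilon$ and the scaling $\u^\nu(x,t)\defeq\alpha\hat\u(\beta x,\alpha\beta t)$ sends NS at viscosity $\nu_0$ to NS at viscosity $\nu=\nu_0\alpha/\beta$ with datum $\u_0$. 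The ball inclusion together with this scaling yields $\u^\nu\in\W_\nu(\u_0)$ with
\[
\norm{\u^\nu}_{L^\infty(0,\infty;L^2)}+\sqrt{\nu}\norm{\nabla\u^\nu}_{L^2(0,\infty;L^2)}\le C\norm{\u_0}_{L^2},
\]
\[
\norm{\u^\nu}_{L^r(0,T^*;L^s)}\le C\nu^{-3\left(\frac{5}{3r}+\frac{2}{s}-1\right)}\norm{\u_0}_{L^2}^{2\left(\frac{2}{r}+\frac{3}{s}-1\right)}
\]
on the interval $[0,T^*]$ with $T^*=C'\norm{\u_0}_{L^2}^4/\nu^5$; the exponents are exactly those of \eqref{e:Supercritical a priori} with $Z=L^2_\sigma$.

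For Part~1, $\frac{5}{3r}+\frac{2}{s}<1$ makes $-3(\frac{5}{3r}+\frac{2}{s}-1)$ strictly positive. Since $T^*\to\infty$ as $\nu\to 0$, for each fixed $T>0$ and $\nu$ sufficiently small we have $T<T^*$, so $\norm{\u^\nu}_{L^r(0,T;L^s)}\to 0$. I extract weak$^*$ subsequential limits $\u^{\nu_j}\rightharpoonup 0$ in $L^\infty(0,\infty;L^2_\sigma)$ and $\u^{\nu_j}\otimes\u^{\nu_j}\rightharpoonup S$ as a bounded family of Radon measures; the pair $(0,S)$ satisfies the relaxed Euler system \eqref{e:Relaxed NS} of Section~\ref{s:The proof of generic non-solvability theorem} with datum $\u_0$. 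Testing the weak formulation with $\varphi(x,t)=\chi_\delta(t)\Phi(x)$, where $\Phi\in C_c^\infty(\R^3)$ is divergence-free and $\chi_\delta\in C_c^\infty([0,\delta))$ with $\chi_\delta(0)=1$, and letting $\delta\to 0$ kills the $S$ contribution and forces $\int_{\R^3}\u_0\cdot\Phi\,dx=0$ for every such $\Phi$, whence $\u_0=0$; this contradicts the choice $\u_0\neq 0$.

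For Part~2 the direction $\Leftarrow$ is immediate since $L^r(0,\infty;L^s)\subset\bigcup_{T>0}L^r(0,T;L^s)$. For $\Rightarrow$ the construction above yields the stated bounds on $[0,T^*]$ rather than on $[0,\infty)$. The main obstacle is the globalisation step: iteratively gluing Baire-extracted pieces at times $s_k=\sum_{j<k}T_j^*$ (using $\u^\nu(s_k)$ as the new initial datum at each step) produces a weak solution on $[0,\infty)$, but the per-interval $L^r$ bounds only sum to the claimed global estimate provided the kinetic energies $\norm{\u^\nu(s_k)}_{L^2}$ are non-increasing, since the Baire bound $\norm{\u}_{L^\infty(0,\infty;L^2)}\le C\norm{\u_0}_{L^2}$ carries a constant $C\ge 1$. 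This is arranged by strengthening $Y_{M,N}$ to demand the Leray-Hopf energy inequality; the strengthened set is still closed under strong $L^2_\sigma$ convergence by the first part of Proposition~\ref{p:Weak* stability} and Theorem~\ref{t:Leray}, and non-meagreness is inherited. Combined with the $L^2$-decay of Leray-Hopf solutions on $\R^3$, summing the per-interval $L^r$ bounds then yields the claimed global estimate.
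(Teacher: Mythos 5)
Your treatment of the first part is correct and follows the paper's machinery: the decomposition into the doubly-indexed closed sets $Y_{M,N}$, the recentering via Lemma~\ref{l:GKL23}, the two-parameter rescaling to arbitrary viscosity (your exponent bookkeeping for $\alpha,\beta$ and for $T^*$ checks out), and the passage to the relaxed system in the limit $\nu\to 0$ are exactly the intended argument; the only cosmetic point is that to ``kill the $S$ contribution'' as $\delta\to 0$ you should record that $S$ is a weak$^*$ limit in $L^\infty(0,\infty;\mathbf{M})$ (no atoms in time), which the uniform $L^\infty_t L^1_x$ bound on $\u^{\nu_j}\otimes\u^{\nu_j}$ provides.

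The second part has a genuine gap in the globalisation step, in two places. First, you strengthen $Y_{M,N}$ to demand the energy inequality and assert that ``non-meagreness is inherited.'' It is not: the hypothesis only supplies, for a non-meagre set of data, \emph{some} $\u\in\W_{\nu_0}(\u_0)\cap\cup_{T>0}L^r(0,T;L^s)$, and for supercritical $(r,s)$ there is no result forcing such a weak solution to satisfy the energy inequality (whether all weak solutions do is open, as the paper stresses). The set of data admitting an \emph{energy-inequality} solution with the same local integrability could a priori be meagre, so your strengthened sets need not cover a non-meagre set; some argument reducing $\W$ to $\NN$ here is required and is missing. Second, even granting the energy inequality, monotonicity of $\norm{\u^\nu(s_k)}_{L^2}$ does not make the per-interval bounds summable: each glued piece contributes roughly $C\norm{\u^\nu(s_k)}_{L^2}^{2r(\frac{2}{r}+\frac{3}{s}-1)}$ to $\norm{\u^\nu}_{L^r(0,\infty;L^s)}^r$, and if the energies merely decrease to a positive limit this series diverges, while if they decrease to zero the restart horizons $T_k\approx c\norm{\u^\nu(s_k)}_{L^2}^4/\nu^5$ may fail to exhaust $[0,\infty)$; qualitative $L^2$-decay gives neither summability nor coverage. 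The paper's blueprint (proof of Corollary~\ref{c:Relaxed}, direction \eqref{i:Besov local existence} $\Rightarrow$ \eqref{i:Besov existence in L2sigma}) avoids this: one glues only \emph{finitely} many pieces until the eventual-regularity time of the Leray--Hopf solution (Remark~\ref{r:Besov integrability remark} and its $L^s$ analogue), controls the tail qualitatively from eventual regularity and the global energy bounds, and only then feeds the resulting \emph{qualitative} global solvability on a ball back into Theorem~\ref{t:Relaxed} \eqref{i:Non-meagre} $\Rightarrow$ \eqref{i:A priori} to obtain the quantitative estimate. Your argument should be restructured along these lines, and the passage from weak to energy-inequality solutions must either be justified or the iteration replaced by one that works within $\W_{\nu_0}$.
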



\section{The proof of Theorem \ref{t:Main}} \label{s:The proof of main theorem}
We prove the directions \eqref{i:Time weight} $\Rightarrow$ \eqref{i:L2 data} $\Rightarrow$ \eqref{i:L2 cap L3 data} $\Rightarrow$ \eqref{i:H1 mild} $\Rightarrow$ \eqref{i:A priori estimate for L2 data} $\Rightarrow$ \eqref{i:Time weight} in Theorem \ref{t:Main}.

\begin{proof}[Proof of \eqref{i:Time weight} $\Rightarrow$ \eqref{i:L2 data}]
Select $\nu > 0$ and an increasing function $g \colon (0,\infty) \to (0,\infty)$ such that the set $Y \defeq \{\u_0 \in L^2_\sigma: \exists \u \in \NN_\nu(\u_0) \cap \cup_{T > 0} L^r_g(0,T;L^s)\}$ is non-meagre. Then one of the sets
\[Y_M \defeq \{\u_0 \in L^2_\sigma: \exists \u \in \NN_\nu(\u_0) \text{ such that } \norm{\u}_{L^r_g(0,1/M;L^s)} \leq M\}\]
contains a closed ball $\bar{B}(\vv_0,\epsilon)$. We denote $T \defeq 1/M$. Using Lemma \ref{l:GKL23} as before and applying Theorem \ref{t:Galdi19} we get $\bar{B}(0,\epsilon) \subset Y_M$. Our aim is to show that $\NN_\nu(\u_0) \cap \cap_{\tau > 0} L^r(\tau,\infty;L^s) \neq \emptyset$ for every $\u_0 \in \bar{B}(0,\epsilon)$; claim \eqref{i:L2 data} then follows by scaling.

Let $\u_0 \in \bar{B}(0,\epsilon)$. Choose $\u \in \NN_\nu(\u_0)$ such that $\norm{\u}_{L^r_g(0,T;L^s)} \leq M$. Now $\u(\frac{T}{2}) \in \bar{B}(0,\epsilon)$, and so there exists $\u_2 \in \NN_\nu(\u(\frac{T}{2}))$ with $\norm{\u_2}_{L^r_g(0,T;L^s)} \leq M$. Since $\u \in L^r(\frac{T}{2},T;L^s)$, Theorem \ref{t:Weak-strong uniqueness} implies that $\u|_{\R^3 \times (\frac{T}{2},T)} = \u_2(\cdot+\frac{T}{2})|_{\R^3 \times (\frac{T}{2},T)}$. Repeating the process by induction and using the fact that $\u \in \cup_{T > 0} L^r(T,\infty;L^s)$ we conclude that $\u \in \cap_{\tau > 0} L^r(\tau,\infty;L^s)$.
\end{proof}

\begin{remark}
The proof above, with a bit of work and the scaling argument that we used in the proof of \eqref{e:Supercritical a priori}, gives another equivalent condition: there exists an increasing function $g \colon (0,\infty) \to (0,\infty)$ such that for every $\nu>0$ and $\u_0 \in L^2_\sigma$ there exists $\u^\nu \in \NN_\nu(\u_0)$ with
\[\norm{\u^\nu}_{L^r_{g(\nu^5 \norm{\u_0}_{L^2}^{-4} \cdot)}(0,\infty;L^s)} \leq \nu^{1 - \frac{1}{r}}.\]
\end{remark}

\begin{proof}[Proof of \eqref{i:L2 data} $\Rightarrow$ \eqref{i:L2 cap L3 data}]
Let $\u_0 \in L^2_\sigma \cap L^3$. Use Theorem \ref{t:L2 cap L3 data local solvability} to choose $T > 0$ and a unique weak solution  $\u \in L^\infty(0,T;L^2_\sigma) \cap L^2(0,T;\dot{H}^1) \cap L^r(0,T;L^s)$. Next use \eqref{i:L2 data} to select $\vv \in \mathcal{N}(\u_0) \cap \cap_{\tau > 0} L^r(\tau,\infty;L^s)$. By Theorem \ref{t:Weak-strong uniqueness}, $\u = \vv$ in $\R^3 \times [0,T)$.
\end{proof}

\begin{proof}[Proof of \eqref{i:L2 cap L3 data} $\Rightarrow$ \eqref{i:H1 mild}]
Given $\u_0 \in H^1_\sigma$ and $T>0$ we use Theorem \ref{t:Tao 2} to get a mild $H^1$ solution $\u \in L^\infty(0,\tilde{T};H^1) \cap L^2(0,\tilde{T};H^2)$ for some $\tilde{T} > 0$. On the other hand, \eqref{i:L2 cap L3 data} gives $\vv \in \mathcal{N}(\u_0) \cap L^r(0,\infty;L^s)$. By Theorem \ref{t:Weak-strong uniqueness}, $\u = \vv$ in $\R^3 \times [0,\tilde{T})$. Now \eqref{i:H1 mild} follows from Theorem \ref{t:Higher integrability}.
\end{proof}

\begin{proof}[Proof of \eqref{i:H1 mild} $\Rightarrow$ \eqref{i:A priori estimate for L2 data}] Let $\nu > 0$. Given $\u_0 \in L^2_\sigma$ we denote by $\tilde{\mathcal{N}}_\nu(\u_0)$ the non-empty (by Theorem \ref{t:Leray}) set of Leray-Hopf solutions with initial datum $\u_0$ that satisfy the strong energy inequality:
\begin{equation} \label{e:Strong energy inequality}
\frac{1}{2} \norm{\u(t)}_{L^2}^2 + \nu \int_{t'}^t \norm{\nabla \u(\tau)}_{L^2}^2 d\tau \leq \frac{1}{2} \norm{\u(t')}_{L^2}^2
\end{equation}
at every $t > t'$ and a.e. $t' \geq 0$, including $t' = 0$.

Denote $X^s \defeq L^s$ for $3 \leq s < \infty$ and $X^s \defeq \tp{BMO}$. We intend to show that
\begin{align}
& H(\delta,\tau) \defeq \sup_{\norm{\u_0}_{L^2} \leq \delta} \sup_{\u \in \tilde{\mathcal{N}}_\nu(\u_0)} \norm{\u}_{L^r(\tau,\infty;X^s)} < \infty \quad \text{for all } \delta,\tau > 0, \label{e:H1} \\
& \lim_{\delta \to 0} H(\delta,1) = 0. \label{e:H2}
\end{align}
Once \eqref{e:H1}--\eqref{e:H2} are proved, we use the scaling symmetry $\u_\lambda(x,t) = \u(\frac{x}{\lambda},\frac{t}{\lambda^2})/\lambda$ to set $H(\delta,\tau) = H(\frac{\delta}{\tau^{1/4}},1) \eqdef G(\frac{\delta}{\tau^{1/4}})$ in the statement of condition \eqref{i:A priori estimate for L2 data}. Then $G(\delta) = H(\delta,1) \to 0$ as $\delta \to 0$. Therefore, setting $G(0) \defeq 0$, $G$ is continuous at $0$. Continuity of $G$ at every $\delta > 0$ is proved by a simple continuity argument. We then finish the proof by covering other viscosities.

We first intend to prove \eqref{e:H1}--\eqref{e:H2} in the case $r = \infty$, $s = 3$. Given $\delta, \tau > 0$, $\u_0 \in L^2_\sigma$ with $\norm{\u_0}_{L^2} \leq \delta$ and $\u \in \tilde{\mathcal{N}}_\nu(\u_0)$ we set $t' = 0$ in \eqref{e:Strong energy inequality} to get
\[\abs{\left\{t \in (0,\tau): \norm{\nabla \u(t)}_{L^2}^2 > \frac{\norm{\u_0}_{L^2}^2}{\nu \tau} \right\}} < \frac{\tau}{2}.\]
Therefore, there exists a set of times of positive measure $S \subset (\frac{\tau}{2},\tau)$ such that $\norm{\u(t')}_{H^1} \leq (1+\frac{1}{\sqrt{\nu \tau}}) \norm{\u_0}_{L^2}$ for every $t' \in S$. By using Theorem \ref{t:Leray} we may choose $t' \in S$ such that $\u \in C^\infty(\R^3 \times (t'-\gamma,t'+\gamma))$ for some $\gamma > 0$ and $\u(t'+\cdot)$ is a Leray-Hopf solution with initial datum $\u(t')$. By assumption \eqref{i:H1 mild} and Theorem \ref{t:Tao}, there exists an $H^1$ mild solution $\vv \in C^\infty((t',\infty) \times \R^3) \cap L^\infty(t',\infty;H^1)$ with initial datum $\vv(t') = \u(t')$. By Theorem \ref{t:Weak-strong uniqueness}, $\vv = \u$ in $\R^3 \times [t',\infty)$.

Now $\u$ is a smooth $H^1$ mild solution on $[\tau,\infty) \times \R^3$. Therefore, using Theorem \ref{t:Tao} and the embedding $H^1 \hookrightarrow L^3$ we obtain $\norm{\u}_{L^\infty(\tau,\infty;L^3)} \leq C F((1+\frac{1}{\sqrt{\nu \tau}}) \norm{\u_0}_{L^2})$. We conclude that
\[H(\delta,\tau) \leq C F\left( \left(1+\frac{1}{\sqrt{\nu \tau}} \right) \delta \right)\]
for every $\delta,\tau > 0$. Furthermore, $H(\delta,1) \leq C F((1+\frac{1}{\sqrt{\nu}}) \delta) \to 0$ as $\delta \to 0$.

We next prove \eqref{e:H1}--\eqref{e:H2} in the case $r = 2$, $s = \infty$; the general case then follows by interpolation between $L^2(\tau,\infty;\tp{BMO})$ and $L^\infty(\tau,\infty;L^3)$. First note that $\norm{\u}_{L^\infty(\tau,\infty;H^1)} \leq F((1+\frac{1}{\sqrt{\nu \eta}}) \norm{\u_0}_{L^2})$ as above. Thus, applying Theorem \ref{t:Tao 2} repeatedly and the embedding $H^2 \hookrightarrow L^\infty$ we get
\[\norm{\u}_{L^2(\tau,T;L^\infty)} \leq C
\norm{\u}_{L^2(\tau,T;H^2)} \leq C T F\left( \left(1+\frac{1}{\sqrt{\nu \eta}} \right) \right) \norm{\u_0}_{L^2}\]
for any $T > \tau$. By the energy inequality \eqref{e:Energy inequality} we can choose $T \leq C'\frac{\norm{\u_0}_{L^2}^4}{\nu^3}$ such that $\norm{\u(T)}_{\dot{H}^{1/2}} \leq c\nu$, where $c > 0$ is given by Theorem \ref{t:Fujita-Kato}. Thus, by Remark \ref{r:Embeddings}, $\norm{\u}_{L^2(T,\infty;\tp{BMO})} \leq C \nu^{-1/2} \norm{\u(T)}_{\dot{H}^{1/2}} \leq cC \nu^{1/2}$. We conclude that \eqref{e:H1}--\eqref{e:H2} hold.

We have proved the \emph{a priori} estimate \eqref{e:Supercritical a priori} for one viscosity, which we denote by $\nu_0 > 0$. Let now $\nu > 0$ and $\u_0 \in L^2_\sigma$. Denote $\tilde{\u}_0(x) \defeq \u_0(\lambda x)$, where $\lambda = \frac{\nu}{\nu_0}$. Choose $\u \in \NN_{\nu_0}(\tilde{\u}_0)$ such that $\norm{\u}_{L^r(\tau,\infty;L^s)} \leq G(\frac{\|\tilde{\u}_0\|_{L^2}}{\tau^{1/4}})$ for all $\tau > 0$. Now
\[\u_\lambda(x,t) \defeq \u \left( \frac{x}{\lambda}, \frac{t}{\lambda} \right)\]
satisfies $\u_\lambda \in \NN_\nu(\u_0)$ and
\begin{align*}
\norm{\u_\lambda}_{L^r(\tau,\infty;L^s)}
&= \lambda^{\frac{1}{r} + \frac{3}{s}} \norm{\u}_{L^r(\tau/\lambda,\infty;L^s)}
\leq \lambda^{\frac{1}{r} + \frac{3}{s}} G \left( \frac{\norm{\tilde{\u}_0}_{L^2}}{(\tau/\lambda)^\frac{1}{4}} \right) \\
&= \left( \frac{\nu}{\nu_0} \right)^{1 - \frac{1}{r}} G \left( \left( \frac{\nu_0}{\nu} \right)^{\frac{5}{4}} \frac{\norm{\u_0}_{L^2}}{\tau^\frac{1}{4}} \right),
\end{align*}
which completes the proof.
\end{proof}

\begin{proof}[Proof of \eqref{i:A priori estimate for L2 data} $\Rightarrow$ \eqref{i:Time weight}]
Choose $G \in C[0,\infty)$ as in \eqref{i:A priori estimate for L2 data}. We intend to construct an increasing function $g \colon (0,\infty) \to (0,\infty)$ such that for every $\u_0 \in \bar{B}(0,1)$ there exists $\u \in \NN_1(\u_0) \cap L^r_g(0,1;L^s)$.

We define
\[g(t) \defeq \frac{2^{-k}}{G(2^{k/4})}, \qquad 2^{-k} < t \leq 2^{-k+1}\]
for each $k \in \Z$. If $\u_0 \in \bar{B}(0,1)$, choose $\u \in \NN_1(\u_0)$ such that $\norm{\u}_{L^r(\tau,\infty;L^s)} \leq G(\tau^{-1/4})$ for every $\tau > 0$. Then
\[\int_0^1 \norm{\u(t)}_{L^s}^r g(t) \, dt = \sum_{k=1}^\infty \int_{2^{-k}}^{2^{-k+1}} \norm{\u(t)}_{L^s}^r g(t) \, dt \leq \sum_{k=1}^\infty g(2^{-k+1}) G(2^{k/4}) = 1,\]
which completes the proof of Theorem \ref{t:Main}.
\end{proof}

\begin{remark}
By the same ideas, claims \eqref{i:Time weight}--\eqref{i:H1 mild} of Theorem \ref{t:Main} are also equivalent to well-posedness in various other function spaces, but we do not pursue a general formulation here.
\end{remark}

\section{The proof of Theorem \ref{t:Dense to generic}} \label{s:The proof of dense to generic theorem}
We prove Theorem \ref{t:Dense to generic}; the result holds also on the flat torus $\mathbb{T}^3$.

\begin{proof}[Proof of (i)]
Given $\u_0 \in L^2_\sigma$ and $\u \in \NN_\nu(\u_0)$ we denote
\[\mathcal{G}(t) \defeq \norm{\u(t)}_{L^2}^2 + 2 \nu \int_0^t \norm{\nabla \u(\tau)}_{L^2}^2 d\tau\]
for every $t > 0$. Consider the set $Y$ of data $\u_0 \in Z$ for which some $\u \in \mathcal{N}_\nu(\u_0)$ fails to satisfy the energy equality a.e. We write $ Y = \cup_{M,N=1}^\infty Y_{MN}$, where
\[Y_{MN} \defeq \left\{ \u_0 \in Z: \exists \u \in \NN_\nu(\u_0) \text{ with } \Aint_0^N \mathcal{G}(t) \, dt \leq \left( 1 - \frac{1}{M} \right) \norm{\u_0}_{L^2}^2 \right\}.\]
By Proposition \ref{p:Weak* stability}, each $Y_{MN}$ is closed, and so $L^2_\sigma \setminus Y$ is $G_\delta$.
\end{proof}

\begin{proof}[Proof of (ii)]
Denote $Y \defeq \{\u_0 \in Z: \NN_\nu(\u_0) \text{ is not a singleton}\}$. Given $M \in \mathbb{N}$ we metrise the weak topology of the closed ball $\bar{B}_{L^2(0,\infty;\dot{H}^1)}(0,M) \defeq \{\u \in L^2(0,\infty;\dot{H}^1): \norm{\u}_{L^2(0,\infty;\dot{H}^1)} \leq M\}$ by a metric $d_M$. We write $Y = \cup_{M,N=1}^\infty Y_{MN}$, where the sets
'\[Y_{MN} \defeq \{\u_0 \in Z: \exists \u,\vv \in \mathcal{N}_\nu(\u_0) \cap \bar{B}_{L^2(0,\infty;\dot{H}^1)}(0,M) \text{ with } d_M(\u,\vv) \geq 1/N\}\]
are closed by Proposition \ref{p:Weak* stability}.
\end{proof}

\begin{proof}[Proof of Claim \eqref{i:No dissipation anomaly}]
We write
\[Y \defeq \left\{ u_0 \in Z: \liminf_{\nu \to 0} \sup_{\u^\nu \in \NN_\nu(\u_0)} \int_0^t (\norm{\u_0}_{L^2}^2 - \norm{\u^\nu(\tau)}_{L^2}^2) \, d\tau > 0 \text{ for some } t > 0 \right\}\]
as $Y = \cup_{M=1}^\infty Y_M$, where the sets
\begin{align*}
Y_M &\defeq \left\{ \u_0 \in Z: \inf_{0 < \nu \leq 1/M} \sup_{\u^\nu \in \NN_\nu(\u_0)} \int_0^M (\norm{\u_0}_{L^2}^2 - \norm{\u^\nu(t)}_{L^2}^2) \, dt \geq \frac{1}{M} \right\} \\
&= \left\{ \u_0 \in Z: \sup_{0 < \nu \leq 1/M} \inf_{\u^\nu \in \NN_\nu(\u_0)} \Aint_0^M \norm{\u^\nu(t)}_{L^2}^2 dt \leq \norm{\u_0}_{L^2}^2 - \frac{1}{M^2} \right\}
\end{align*}
are closed by Proposition \ref{p:Weak* stability}.

It remains to prove the part of claim \eqref{i:No dissipation anomaly} in parentheses. Let $\u_0 \in Z \setminus Y$. First choose $\nu_j \to 0$ and $\u^{\nu_j} \in \NN_{\nu_j}(\u_0)$ such that
\begin{equation} \label{e:Limit of norms}
\lim_{j \to \infty} \int_0^T \norm{\u^{\nu_j}(\tau)}_{L^2}^2 d\tau = T\norm{\u_0}_{L^2}^2 \quad \text{for all } T > 0.
\end{equation}
Passing to a subsequence, $\u^{\nu_j} \overset{*}{\rightharpoonup} \u$ in $L^\infty(0,\infty;L^2_\sigma)$, where $\norm{\u(t)}_{L^2} \leq \norm{\u_0}_{L^2}$ for all $t > 0$. Now \eqref{e:Limit of norms} implies that $\lim_{j \to \infty} \norm{\u^{\nu_j}-\u}_{L^2(0,T;L^2)} = 0$ for every $T > 0$ and $\norm{\u(t)}_{L^2} = \norm{\u_0}_{L^2}$ a.e. $t > 0$ for the representative $\u \in C_w([0,\infty);L^2_\sigma)$. In particular,
\begin{align*}
&\liminf_{\nu \to 0} \sup_{\u^\nu \in \NN_\nu(\u_0)} \nu \int_0^T \int_0^t \norm{\nabla \u^\nu(\tau)}_{L^2}^2 d\tau \, dt \\
&\leq \liminf_{\nu \to 0} \sup_{\u^\nu \in \NN_\nu(\u_0)} \int_0^T (\norm{\u_0}_{L^2}-\norm{\u(t)}_{L^2}) \, dt = 0
\end{align*}
for all $T > 0$, which easily implies the remaining claim.
\end{proof}




\section{Necessary conditions for global regularity} \label{s:Necessary conditions for global regularity}
We relate Theorem \ref{t:Dense to generic} to the global regularity problem for the Navier-Stokes and Euler equations. For Navier-Stokes, we recall whole space versions for Schwartz initial data in the form stated in~\cite{Fef06}. We start with the positive direction.

\begin{claim}[existence and smoothness] \label{c:Millennium (A)}
Suppose
\begin{equation} \label{e:Schwartz data}
\abs{\partial_x^\alpha \u_0(x)} \leq C_{\alpha K} (1+\abs{x})^{-K} \quad \text{on } \R^3, \text{ for any } \alpha \in \N_0^n \text{ and } K \geq 0
\end{equation}
and $\nabla \cdot \u_0 = 0$. Then there exists $(\u,p) \in C^\infty(\R^3 \times [0,\infty);\R^3 \times \R)$ such that \eqref{e:NS1}--\eqref{e:NS2} hold and $\u \in L^\infty(0,\infty;L^2_\sigma)$.
\end{claim}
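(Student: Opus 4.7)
The plan is to establish global existence of smooth solutions by the classical three-step scheme: local well-posedness in a high-regularity class, a blowup criterion, and an \emph{a priori} estimate ruling out blowup.

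First, I would invoke Kato/Fujita-Kato type local well-posedness at $H^m$ regularity: for $\u_0$ satisfying \eqref{e:Schwartz data} the standard short-time contraction argument (an $H^m$ analogue of Theorem \ref{t:Tao 2}) yields a maximal existence time $T^* = T^*(\u_0) \in (0,\infty]$ and a unique classical solution $\u \in C^\infty(\R^3 \times [0,T^*);\R^3)$ with $\u \in L^\infty(0,T;H^m)$ for every $m \in \N_0$ and every $T < T^*$. Persistence of Schwartz decay on $[0,T^*)$ follows from the Duhamel representation, since the heat kernel and Riesz transforms preserve polynomial decay and the nonlinearity $\u \otimes \u$ inherits it. The energy inequality \eqref{e:Energy inequality} supplies $\u \in L^\infty(0,T^*;L^2_\sigma)$ gratis, and the pressure is recovered as $p = (-\Delta)^{-1} \partial_i \partial_j (u_i u_j)$.

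The remaining task is to upgrade the maximal time to $T^* = \infty$. For this I would appeal to a Ladyzhenskaya-Prodi-Serrin blowup criterion: by Theorem \ref{t:Weak-strong uniqueness} together with Theorem \ref{t:Higher integrability}, it suffices to establish
\[\norm{\u}_{L^r(0,T^*;L^s)} < \infty \quad \text{for some } \tfrac{2}{r}+\tfrac{3}{s}=1, \ s \in [3,\infty)\]
(or equivalently, to control $\norm{\u}_{L^\infty(0,T^*;L^3)}$ via Escauriaza-Seregin-\v{S}ver\'ak~\cite{ISS03}, or $\norm{\nabla \u}_{L^1(0,T^*;L^\infty)}$ via Beale-Kato-Majda). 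Any such critical, scale-invariant bound would, together with Theorem \ref{t:Higher integrability} and a bootstrap in $H^m$, force $T^* = \infty$ and deliver the claim.

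The main obstacle, and indeed the content of the Clay Millennium problem, is producing this critical \emph{a priori} bound from the only globally coercive quantity presently available, namely the supercritical kinetic energy and cumulative dissipation supplied by \eqref{e:Energy inequality}. As emphasised in~\cite{Tao08} and reflected in Theorem \ref{t:Relaxed} of this paper, no soft argument (interpolation, Baire category, weak compactness, scaling) can bridge the supercritical-to-critical gap on $\R^3$, and the space $\dot{B}_{3,\infty}^{1/3}$-type criticality threshold discussed in \textsection 1.3 is precisely where existing methods stall. Any honest completion of this plan would require a genuinely new mechanism — for example, ruling out (discretely) self-similar singular profiles in the spirit of Ne\v{c}as-R\r{u}\v{z}i\v{c}ka-\v{S}ver\'ak, Tsai, Rusin-\v{S}ver\'ak or Jia-\v{S}ver\'ak~\cite{RS11,JS13}, or uncovering a hitherto undetected monotone quantity sitting at critical scaling — and I have none to offer.
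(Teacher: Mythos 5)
You have correctly identified what this statement is: it is not a theorem of the paper but the existence-and-smoothness half of the Clay Millennium Prize problem, recalled essentially verbatim from Fefferman's official formulation \cite{Fef06}. The paper offers no proof of Claim \ref{c:Millennium (A)}; it is stated only so that it can serve as a \emph{hypothesis} in Corollary \ref{c:Millennium (A) corollary} (``Suppose Claim \ref{c:Millennium (A)} holds. Then\ldots''), and the surrounding text consistently treats global regularity as open. There is therefore no proof in the paper against which to compare your attempt.

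As a proof attempt, yours has a genuine and openly acknowledged gap: the critical \emph{a priori} estimate needed to rule out blowup at the maximal existence time $T^*$. Your scaffolding --- local well-posedness in $H^m$ with persistence of Schwartz decay, reduction of global regularity to an LPS-type or Beale--Kato--Majda criterion via Theorems \ref{t:Weak-strong uniqueness} and \ref{t:Higher integrability}, and the observation that the energy inequality \eqref{e:Energy inequality} controls only supercritical quantities --- is the standard and correct framing, and it is consistent with how the paper itself discusses the matter (the remarks after Theorem \ref{t:Relaxed} on the likely insufficiency of supercritical bounds, and Theorem \ref{t:Main}, which lists conditions \emph{equivalent} to global $H^1$ mild solvability rather than establishing any of them). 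But the decisive step is missing, you say so yourself, and nothing in the paper supplies it. The honest conclusion is that the statement remains unproved by both you and the paper; your write-up should present it as a conjecture or hypothesis, not as a claim with a proof.
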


For various necessary and sufficient conditions for Claim \ref{c:Millennium (A)}, see~\cite{Tao13}. If $\u$ is as in Claim \ref{c:Millennium (A)}, then, by~\cite[Corollary 11.1]{Tao13}, $\u \in \cap_{T > 0} L^\infty(0,T;H^1_\sigma) \cap L^2(0,T;H^2)$. Then, by~\cite[Theorem 5.4]{Tao13} and~\cite[Theorem B]{BZZ19}, $\u \in \cap_{k \in \N_0} L^\infty(0,\infty;H^k)$ and all the norms $\norm{\nabla^k \u(t)}_{L^2}$, $k \in \N_0$, are eventually decreasing. In particular, by Theorem \ref{t:Weak-strong uniqueness}, if $\u$ is as in Claim \ref{c:Millennium (A)}, then it is a unique Leray-Hopf solution. Theorems \ref{t:Relaxed} and \ref{t:Dense to generic} then imply the following:
\begin{corollary} \label{c:Millennium (A) corollary}
Suppose Claim \ref{c:Millennium (A)} holds. Then, for a residual set $Y \subset L^2_\sigma$ of initial data, the Leray-Hopf solution is unique and satisfies the energy equality at a.e. $t > 0$.
\end{corollary}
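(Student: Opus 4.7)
The plan is to apply Theorem \ref{t:Dense to generic} with $Z = L^2_\sigma$ and the chosen viscosity $\nu_0$, which directly produces two $G_\delta$ subsets $Y_1, Y_2 \subset L^2_\sigma$ consisting respectively of data for which every Leray-Hopf solution satisfies the energy equality at a.e.\ $t > 0$, and data for which the Leray-Hopf solution is unique. Their intersection $Y \defeq Y_1 \cap Y_2$ is $G_\delta$, so the whole task reduces to showing that $Y$ is dense in $L^2_\sigma$.

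For density, I would argue as follows. First, the solenoidal Schwartz vector fields form a dense subspace of $L^2_\sigma$ (for example, taking curls of Schwartz fields, or projecting truncated mollifications, gives such approximations). Fix $\u_0 \in L^2_\sigma$ and $\epsilon > 0$ and pick a solenoidal Schwartz field $\vv_0$ with $\norm{\u_0 - \vv_0}_{L^2} < \epsilon$. Since $\vv_0$ satisfies \eqref{e:Schwartz data}, Claim \ref{c:Millennium (A)} furnishes a global smooth solution $\u$ with datum $\vv_0$ and $\u \in L^\infty(0,\infty;L^2_\sigma)$. By the remarks quoted after Claim \ref{c:Millennium (A)} (relying on~\cite{Tao13,BZZ19}), $\u \in \cap_{T>0} L^\infty(0,T;H^1_\sigma) \cap L^2(0,T;H^2)$, and via the Sobolev embedding $H^1 \hookrightarrow L^6$ this gives $\u \in L^\infty_{\tp{loc}}(0,\infty;L^6) \subset L^4_{\tp{loc}}(0,\infty;L^6)$, which is an LPS class (the pair $(r,s)=(4,6)$ satisfies \eqref{e:Prodi-Serrin}).

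Applying Theorem \ref{t:Weak-strong uniqueness} on every bounded time interval $[0,T]$ then identifies $\u$ as the unique Leray-Hopf solution with datum $\vv_0$, so $\vv_0 \in Y_2$. Being smooth with finite energy, $\u$ satisfies the energy equality classically at every $t > 0$ (multiply \eqref{e:NS1} by $\u$ and integrate), so also $\vv_0 \in Y_1$. Therefore $\vv_0 \in Y$, proving $Y$ is dense; since $Y$ is $G_\delta$, it is residual.

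The only point that requires some care is verifying that the global smooth solution supplied by Claim \ref{c:Millennium (A)} automatically meets the hypothesis of the weak-strong uniqueness result, i.e.\ that it lies in an LPS class on bounded time intervals; this is precisely where the a posteriori Schwartz-regularity control via~\cite{Tao13,BZZ19} is invoked. Every other ingredient, namely density of solenoidal Schwartz data and the translation of the $G_\delta$ description in Theorem \ref{t:Dense to generic} into a dense $G_\delta$ (hence residual) set, is routine.
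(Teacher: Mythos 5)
Your proposal is correct and follows essentially the same route as the paper: Theorem \ref{t:Dense to generic} supplies the $G_\delta$ structure, and density is obtained by showing that for solenoidal Schwartz data the smooth solution from Claim \ref{c:Millennium (A)} lies in an LPS class (via the regularity results of~\cite{Tao13,BZZ19}), whence weak-strong uniqueness and the energy equality follow. The only cosmetic difference is that you pass to the LPS pair $(4,6)$ through $H^1 \hookrightarrow L^6$, while the paper invokes $\u \in \cap_k L^\infty(0,\infty;H^k)$ directly; both are adequate.
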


We also discuss the Cauchy problem under external forcing,
\begin{equation} \label{e:NS with force}
\partial_t \u + (\u \cdot \nabla) \u - \nu \Delta \u + \nabla p = \F, \quad \nabla \cdot \u = 0, \quad \u(\cdot,0) = \u_0.
\end{equation}
We formulate the negative side from~\cite{Fef06}.
\begin{claim}[breakdown of smoothness] \label{c:Millennium (C)}
There exists an initial datum $\u_0$ satisfying \eqref{e:Schwartz data} and $\nabla \cdot \u_0$, as well as a force $\F$ satisfying
\[\abs{\partial_x^\alpha \partial_t^m \F(x,t)} \leq C_{\alpha m K} (1+\abs{x}+t)^{-K} \quad \text{on } \R^3 \times [0,\infty), \text{ for any } \alpha,m,K,\]
for which there exists no solution $(\u,p) \in C^\infty(\R^3 \times [0,\infty);\R^3 \times \R)$ of \eqref{e:NS with force} with $\u \in L^\infty(0,\infty;L^2_\sigma)$
\end{claim}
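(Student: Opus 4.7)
The plan is, frankly, the one that has eluded the community since Leray: exhibit a Schwartz datum $\u_0$ and a rapidly decaying force $\F$ whose Cauchy problem \eqref{e:NS with force} admits no global smooth solution with finite kinetic energy. Since Claim~\ref{c:Millennium (C)} is verbatim Fefferman's Problem~(C) from~\cite{Fef06}, a bona fide proof would settle one form of the Clay Millennium problem for Navier-Stokes; what follows is a road map rather than an executable plan.

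\textbf{Step 1 (reduction to finite-time blow-up).} For Schwartz $\u_0$ and smooth $\F$ decaying as in the statement, classical local well-posedness (for instance, Theorem~\ref{t:Tao 2} applied to the forced equation with persistence of higher regularity in $H^k$) furnishes a unique smooth solution $(\u,p)$ on a maximal interval $[0,T^*)$. On $[0,T^*)$ the forced energy identity $\tfrac{d}{dt}\|\u\|_{L^2}^2 + 2\nu \|\nabla \u\|_{L^2}^2 = 2\langle \F,\u\rangle$ combined with Gr\"onwall propagates the $L^\infty_t L^2_x$ bound from the space-time decay of $\F$; hence it suffices to produce a pair $(\u_0,\F)$ with $T^* < \infty$, since uniqueness on $[0,T^*)$ and the usual blow-up criterion preclude any smooth global extension.

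\textbf{Step 2 (implant a singular profile).} Start from a known or conjectural blow-up mechanism -- an averaged 3D Navier-Stokes blow-up, a self-similar Euler-type singular profile, or an axisymmetric mechanism with boundary -- localise it into Schwartz class, and use $\F$ as a tuning parameter that (a) compensates for the algebraic mismatch between the idealised nonlinearity and the genuine $(\u\cdot\nabla)\u$, and (b) counteracts viscous regularisation along the singular trajectory. The construction must keep $\F$ Schwartz in $x$ and uniformly decaying in $t$ with all derivatives, which sharply constrains the admissible cancellation schemes in~(a)--(b).

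\textbf{Step 3 (rigorous blow-up).} Run a modulation or weighted-energy argument showing that the true solution stays close to the engineered profile on $[0,T^*)$ and that a Beale-Kato-Majda-type quantity such as $\int_0^{T^*}\|\nabla\u(t)\|_{L^\infty}\,dt$ diverges, ruling out a $C^\infty$ continuation.

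\textbf{Main obstacle.} Step 2 is the genuine Millennium-level barrier. Dissipation $-\nu \Delta \u$ aggressively opposes the concentration required for singularity formation, and no presently known 3D Euler-type mechanism is understood to persist against full Navier-Stokes dissipation, even with a carefully engineered smooth forcing; averaged-equation blow-up has so far similarly resisted transplantation to the true nonlinearity. I do not expect Step~2 to succumb to any technique currently on the market.
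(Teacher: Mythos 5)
This statement is not proved in the paper, and it could not be: Claim \ref{c:Millennium (C)} is quoted essentially verbatim from Fefferman's official Millennium Prize problem description \cite{Fef06}, and the paper states it only so that it (and its negation) can serve as a \emph{hypothesis} in Proposition \ref{p:Forced proposition} and Corollary \ref{c:Millennium (C) corollary} ("If Claim \ref{c:Millennium (C)} fails, then the Leray-Hopf solution with data $(0,\F)$ is unique for a Baire generic force"). The paper takes no position on whether the claim is true and supplies no argument for it; there is therefore no "paper's own proof" to compare against.

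Your proposal is, by your own admission, not a proof, and the gap is exactly where you say it is. Step 1 --- reducing the claim to finite-time blow-up via local well-posedness for the forced equation, the forced energy identity, and a continuation criterion --- is standard and correct as far as it goes. But Steps 2 and 3 specify no datum $\u_0$, no force $\F$, no singular profile, and no modulation or weighted-energy scheme: the instruction "implant a singular profile and tune $\F$ to defeat dissipation" is a restatement of the open problem, not a step toward solving it. Since the entire mathematical content of Claim \ref{c:Millennium (C)} resides in that missing construction, the proposal has a genuine and total gap. The correct way to engage with this statement in the context of this paper is as an unproven hypothesis whose consequences (via Theorem \ref{t:Dense to generic} and Proposition \ref{p:Forced proposition}) are what the paper actually establishes.
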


In Proposition \ref{p:Forced proposition} we present a necessary condition for Claim \ref{c:Millennium (C)} in the spirit of Corollary \ref{c:Millennium (A) corollary}. For the formulation we recall that if $T \in \R_+ \cup \{\infty\}$, $\u_0 \in L^2_\sigma$ and $\F \in L^1(0,T;L^2)$, then $\u \in C_w([0,T);L^2_\sigma) \cap L^2(0,T;\dot{H}^1)$ is a Leray-Hopf solution of \eqref{e:NS with force} if $\nabla \cdot \u(\cdot,t) = 0$ a.e. $t \in (0,T)$ and
\[\int_0^T \int_{\R^3} [\u \cdot (\partial_t \varphi + \nu \Delta \varphi + \F) + \u \otimes \u \colon \nabla \varphi] \, dx \, dt = - \int_{\R^3} \u_0 \cdot \varphi(\cdot,0) \, dx\]
for all $\varphi \in C_c^\infty([0,T) \times \R^3)$ with $\nabla \cdot \varphi = 0$ and the energy inequality
\[\frac{1}{2} \norm{\u(t)}_{L^2}^2 + \nu \int_0^t \norm{\nabla \u(\tau)}_{L^2}^2 d\tau \leq \frac{1}{2} \norm{\u_0}_{L^2}^2 + \int_0^t \int_{\R^3} \u \cdot \F \, dx \, dt\]
holds for all $t \in (0,T)$. Given initial datum $\u_0 \in L^2_\sigma$ and external force $\F \in L^1(0,T;L^2)$ we denote by $\mathcal{N}(\u_0,\F)$ the set of Leray-Hopf solutions with datum $(\u_0,\F)$.

\begin{proposition} \label{p:Forced proposition}
Let $T \in \R_+ \cup \{\infty\}$. Suppose there exists a non-meagre set of forces $\F \in L^1(0,T;L^2)$ such that the Leray-Hopf solution with datum $(0,\F)$ is not unique. Then there exists an external force $\F \in C_c^\infty(\R^3 \times (0,T))$ such that there exists no smooth finite energy solution with datum $(0,\F)$.
\end{proposition}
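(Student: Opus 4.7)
The plan is to argue by contraposition. Assume every $\F \in C_c^\infty(\R^3 \times (0,T))$ admits a smooth finite energy solution of \eqref{e:NS with force} with datum $(0,\F)$, and derive that the set
\[Y \defeq \{\F \in L^1(0,T;L^2) : \mathcal{N}(0,\F) \text{ has more than one element}\}\]
is meagre, contradicting the hypothesis. The strategy parallels the proof of Theorem \ref{t:Dense to generic}\eqref{i:Uniqueness}: express $Y$ as a countable union of closed sets, then exploit density of $C_c^\infty(\R^3 \times (0,T))$ in $L^1(0,T;L^2)$ to show each piece has empty interior.

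I first need a forced analogue of Proposition \ref{p:Weak* stability}: whenever $\F_j \to \F$ in $L^1(0,T;L^2)$ and $\u_j \in \mathcal{N}(0,\F_j)$, a subsequence converges weakly$^*$ in $L^\infty(0,T;L^2_\sigma) \cap L^2(0,T;\dot{H}^1)$ to some element of $\mathcal{N}(0,\F)$. The uniform bound $\|\u_j\|_{L^\infty(0,T;L^2)} + \sqrt{\nu}\,\|\nabla \u_j\|_{L^2(0,T;L^2)} \leq C \|\F_j\|_{L^1(0,T;L^2)}$ follows from the forced energy inequality applied at $\u_0 = 0$; the nonlinear term passes to the limit via Aubin--Lions, and the energy inequality persists by lower semicontinuity. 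Metrizing the weak topology on $\bar{B}_{L^2(0,T;\dot{H}^1)}(0,M)$ by $d_M$ and decomposing
\[Y = \bigcup_{M,N=1}^\infty Y_{MN}, \quad Y_{MN} \defeq \{\F : \exists\, \u,\vv \in \mathcal{N}(0,\F) \cap \bar{B}_{L^2(0,T;\dot{H}^1)}(0,M),\ d_M(\u,\vv) \geq 1/N\},\]
this stability statement makes each $Y_{MN}$ closed in $L^1(0,T;L^2)$.

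The crux is then to prove $C_c^\infty(\R^3 \times (0,T)) \cap Y = \emptyset$; density of $C_c^\infty(\R^3 \times (0,T))$ in $L^1(0,T;L^2)$ will force each closed $Y_{MN}$ to have empty interior, making $Y$ meagre. Fix $\F \in C_c^\infty(\R^3 \times (0,T))$ and let $\u$ be the smooth finite energy solution granted by the hypothesis. Since $\mathrm{supp}(\F)$ is compactly contained in $\R^3 \times (0,T)$, there is $t_0 > 0$ with $\F \equiv 0$ on $[0,t_0]$; smoothness of $\u$ and the energy equality then force $\u \equiv 0$ on $[0,t_0]$. Starting from $\u(t_0) = 0 \in H^k_\sigma$ for every $k$ and using the forced short-time $H^k$-mild theory, combined with the assumed smoothness of $\u$ on $[t_0,T]$ to continue the bound up to time $T$, one obtains $\u \in L^\infty(0,T;H^1_\sigma) \cap L^2(0,T;H^2) \hookrightarrow L^2(0,T;L^\infty)$, which is an LPS class. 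Classical weak--strong uniqueness in the forced setting (a direct Gr\"onwall argument on $\u - \vv$ using $\norm{\u}_{L^2(0,T;L^\infty)} < \infty$) then identifies $\u$ as the only element of $\mathcal{N}(0,\F)$.

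The hardest step is this regularity bootstrap: Claim \ref{c:Millennium (C)} only postulates smoothness together with $\u \in L^\infty(0,\infty;L^2_\sigma)$, and those alone do not yield an LPS-compatible class. The point is that $\u(t_0) = 0$ supplies an initial condition in every $H^k_\sigma$ to launch the mild solution theory, whereupon the assumed smoothness of $\u$ on the compact interval $[t_0,T]$ preserves the $H^k$ bound all the way to $T$; this is what closes the gap and produces the weak--strong uniqueness needed in Step 3.
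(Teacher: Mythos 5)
Your overall architecture coincides with the paper's: the same decomposition of the non-uniqueness set into the sets $Y_{MN}$, closedness via a forced analogue of Proposition \ref{p:Weak* stability}, density of $C_c^\infty(\R^3\times(0,T))$ in $L^1(0,T;L^2)$, and the reduction of everything to the claim that a smooth finite energy solution with datum $(0,\F)$, $\F\in C_c^\infty(\R^3\times(0,T))$, is the unique Leray--Hopf solution. The gap is in how you establish that last claim. The paper obtains $\u \in L^\infty(0,T';H^1)\cap L^2(0,T';H^2)$ for $T'<T$ directly from~\cite[Corollary 11.1]{Tao13} and then invokes forced weak--strong uniqueness. Your substitute bootstrap is circular at exactly the point it is meant to resolve. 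First, to conclude $\u\equiv 0$ on $[0,t_0]$ you invoke ``the energy equality'' for a solution that is only known to be smooth and in $L^\infty(0,\infty;L^2_\sigma)$; on $\R^3$ smoothness carries no spatial decay, $\norm{\nabla\u(t)}_{L^2}$ could a priori be infinite, and the integrations by parts that kill the transport and pressure terms in $\frac{d}{dt}\frac{1}{2}\norm{\u(t)}_{L^2}^2$ are not justified in this class. Establishing the energy identity (indeed, even membership in the energy class) for smooth finite energy solutions is precisely the content of Tao's localized energy estimates, so this step cannot be taken for granted. Second, even granting $\u(t_0)=0$, identifying $\u$ on $[t_0,T]$ with the $H^k$ mild solution launched from $0$ requires a uniqueness theorem valid in a class that is already known to contain $\u$; weak--strong uniqueness applies only once $\u$ is known to be a weak solution satisfying the energy inequality, or to lie in an LPS class --- which is what you are trying to prove. ``The assumed smoothness of $\u$ on $[t_0,T]$'' does not by itself propagate $H^1$ bounds, since a smooth field in $L^2$ need not lie in $H^1$.

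The repair is to cite~\cite[Corollary 11.1]{Tao13} at the outset, as the paper does: every smooth finite energy solution of \eqref{e:NS with force} with such data belongs to $\cap_{T'<T}\,L^\infty(0,T';H^1_\sigma)\cap L^2(0,T';H^2)$, after which your weak--strong uniqueness step and the Baire category argument go through exactly as you wrote them; the $t_0$-trick then becomes unnecessary. The rest of your proposal (the forced stability statement, the closedness of $Y_{MN}$, and the empty-interior argument) is correct and matches the paper.
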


\begin{proof}
We begin by noting that whenever $\F \in C_c^\infty(\R^3 \times (0,T))$ and $\u$ is a smooth finite energy solution with data $(0,\F)$, we have $\mathcal{N}(0,\F) = \{\u\}$. Indeed, by~\cite[Corollary 11.1]{Tao13}, $\u \in L^\infty(0,T;H^1) \cap L^2(0,T;H^2)$. Thus, by weak-strong uniqueness under forcing (e.g.~\cite[Theorem 4.2]{Gal00}), $\u$ is the unique Leray-Hopf solution with datum $(0,\F)$.

We now write the set of forces with a non-unique Leray-Hopf solution as a union $\cup_{M,N=1}^\infty Y_{MN}$, where
\begin{align*}
Y_{MN} \defeq
&\{\F \in L^1(0,T;L^2): \exists \u,\vv \in \mathcal{N}(0,\F) \cap \bar{B}_{L^2(0,\infty;\dot{H}^1)}(0,M) \\
&\text{with } d_M(\u,\vv) \geq 1/N\}.
\end{align*}
By assumption, the closure of some $Y_{MN}$ contains a ball $\bar{B}(\F_0,\epsilon)$. By choosing $\F \in \bar{B}(\F_0,\epsilon)$ we argue as in the proof of Theorem \ref{t:Dense to generic} to conclude that $\mathcal{N}(0,\F)$ is not a singleton, and therefore $\mathcal{N}(0,\F)$ does not contain a smooth finite energy solution.
\end{proof}

\begin{corollary} \label{c:Millennium (C) corollary}
If Claim \ref{c:Millennium (C)} fails, then the Leray-Hopf solution with data $(0,\F)$ is unique for a Baire generic force $\F \in L^1(0,\infty;L^2)$.
\end{corollary}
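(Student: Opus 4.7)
The plan is to apply Proposition \ref{p:Forced proposition} with $T = \infty$ in contrapositive form. Since Claim \ref{c:Millennium (C)} is assumed to fail, for every Schwartz datum $\u_0$ with $\nabla \cdot \u_0 = 0$ and every smooth rapidly decaying external force, \eqref{e:NS with force} admits a smooth finite energy solution. Taking $\u_0 = 0$, every $\F \in C_c^\infty(\R^3 \times (0,\infty))$ --- which is compactly supported and therefore satisfies the decay hypothesis in Claim \ref{c:Millennium (C)} trivially --- admits such a smooth finite energy solution with data $(0, \F)$.

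I would then consider the set of bad forces
\[Y \defeq \{\F \in L^1(0,\infty;L^2): \mathcal{N}(0,\F) \text{ contains at least two elements}\},\]
and argue by contradiction: if $Y$ were non-meagre in $L^1(0,\infty;L^2)$, Proposition \ref{p:Forced proposition} (applied with $T = \infty$) would produce some $\F \in C_c^\infty(\R^3 \times (0,\infty))$ for which \eqref{e:NS with force} has no smooth finite energy solution with datum $(0, \F)$, contradicting the preceding paragraph.

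Therefore $Y$ is meagre, and its complement --- the set of forces $\F \in L^1(0,\infty;L^2)$ with unique Leray-Hopf solution --- is comeagre in $L^1(0,\infty;L^2)$, i.e., Baire generic. Since Proposition \ref{p:Forced proposition} is already at hand and carries all of the heavy lifting (the weak$^*$ compactness arguments, the use of Lemma \ref{l:GKL23} and of weak-strong uniqueness under forcing), the contrapositive step is elementary and there is no serious technical obstacle; the argument amounts to a direct application of Proposition \ref{p:Forced proposition}.
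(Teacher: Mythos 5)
Your proposal is correct and is exactly the intended argument: the paper states Corollary \ref{c:Millennium (C) corollary} without a separate proof precisely because it is the contrapositive of Proposition \ref{p:Forced proposition} with $T=\infty$, combined with the observation that $\u_0=0$ and any $\F \in C_c^\infty(\R^3\times(0,\infty))$ trivially satisfy the decay hypotheses of Claim \ref{c:Millennium (C)}. No gaps.
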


In~\cite{ABC}, Albritton, Bru\'e and Colombo constructed a force $\F \in L^1(0,T;L^2)$ and two suitable Leray-Hopf solutions on $\R^3 \times (0,T)$ with datum $(0,\F)$.

We also mention a corollary of Theorem \ref{t:Dense to generic} on the global regularity problem on the Euler equations, this time in the standard setting of $H^s_\sigma$ data, $s > \frac{5}{2}$.

\begin{corollary} \label{c:Euler}
Let $s > \frac{5}{2}$, and suppose that for every $\u_0 \in H^s_\sigma$ there exists a solution $\u \in L^\infty_{\tp{loc}}([0,\infty);H^s_\sigma)$ of the Euler equations. Then, claim \eqref{i:No dissipation anomaly} of Theorem \ref{t:Dense to generic} holds for a Baire generic datum $\u_0 \in L^2_\sigma$.
\end{corollary}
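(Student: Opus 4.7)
The plan is to combine the $G_\delta$ structure established in the proof of Theorem \ref{t:Dense to generic}\eqref{i:No dissipation anomaly} with a density argument driven by the assumed global regularity of the Euler equations on $H^s_\sigma$, $s > 5/2$. The first step is to recall that the set
\[
Y \defeq \left\{ \u_0 \in L^2_\sigma : \liminf_{\nu \to 0} \sup_{\u^\nu \in \NN_\nu(\u_0)} \int_0^T (\norm{\u_0}_{L^2}^2 - \norm{\u^\nu(t)}_{L^2}^2) \, dt > 0 \text{ for some } T > 0 \right\}
\]
was shown to be an $F_\sigma$ set in $L^2_\sigma$, using only Proposition \ref{p:Weak* stability} and the identical decomposition $Y = \cup_M Y_M$ used earlier. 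Consequently, the "good" set $L^2_\sigma \setminus Y$ is $G_\delta$, and by the Baire category theorem it suffices to prove that it is dense in $L^2_\sigma$.

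The second step is the density assertion: every $\u_0 \in H^s_\sigma$ lies in $L^2_\sigma \setminus Y$. Since $H^s_\sigma$ is dense in $L^2_\sigma$ for any $s > 5/2$, this will conclude the proof. By the assumption of global $H^s$ regularity, there exists a solution $\u \in L^\infty_{\tp{loc}}([0,\infty);H^s_\sigma)$ of the Euler equations with datum $\u_0$. Because $s > 5/2$, the Sobolev embedding gives $\nabla \u \in L^\infty_{\tp{loc}}([0,\infty);L^\infty)$, and the standard energy identity for smooth Euler solutions yields $\norm{\u(t)}_{L^2} = \norm{\u_0}_{L^2}$ for all $t \geq 0$.

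The third step is a weak-strong-type inviscid limit: for every $\u_0 \in H^s_\sigma$ and every Leray-Hopf solution $\u^\nu \in \NN_\nu(\u_0)$, a classical Gronwall-type computation on $\u^\nu - \u$ gives
\[
\norm{\u^\nu(t) - \u(t)}_{L^2}^2 \leq C_T \, \nu \, t \qquad \text{for all } t \in [0,T],
\]
uniformly over $\u^\nu \in \NN_\nu(\u_0)$ (the key point is that $\u$ is smooth, so the transport term $(\u^\nu-\u) \cdot \nabla \u$ is controlled by $\norm{\nabla \u}_{L^\infty(0,T;L^\infty)}$, while the viscous term contributes at most $C\nu\norm{\Delta \u}_{L^2(0,T;L^2)}$ after integration by parts against the Euler solution). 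Combined with energy conservation for $\u$, this yields $\sup_{\u^\nu \in \NN_\nu(\u_0)} \norm{\u^\nu(t)}_{L^2}^2 \to \norm{\u_0}_{L^2}^2$ uniformly on $[0,T]$, and in particular the liminf condition defining $Y$ vanishes at every $T > 0$, so $\u_0 \notin Y$.

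The main obstacle is step three: the inviscid limit must hold uniformly over all Leray-Hopf solutions (not just a distinguished one) and must deliver convergence of energies, not merely weak convergence. The Gronwall estimate above is standard when $\u^\nu$ and $\u$ are sufficiently regular, but here $\u^\nu$ is only a Leray-Hopf solution. The usual workaround is to test the Euler equation against $\u^\nu$ and the Navier-Stokes energy inequality against $\u$, forming $\frac12 \norm{\u^\nu-\u}_{L^2}^2$ and estimating the cross terms using only $\nabla \u \in L^1(0,T;L^\infty)$; this is the classical weak-strong argument adapted to the inviscid limit, and it suffices because the energy inequality for $\u^\nu$ is already built into the definition of $\NN_\nu(\u_0)$.
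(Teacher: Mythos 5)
Your proposal is correct and follows essentially the same route as the paper: reduce to density of the good set in $L^2_\sigma$ via the $G_\delta$ structure from Theorem \ref{t:Dense to generic}\eqref{i:No dissipation anomaly}, then verify the claim for every $\u_0 \in H^s_\sigma$ by the classical weak--strong relative energy argument against the smooth, energy-conserving Euler solution, uniformly over all Leray--Hopf solutions. The paper simply cites the standard energy method (\cite[pp.\ 137--139]{RRS16}) for the inviscid limit $\sup_{\u^\nu \in \NN_\nu(\u_0)} \norm{\u^\nu - \u}_{L^\infty(0,T;L^2)} \to 0$, which is exactly the Gronwall estimate you spell out in your third step.
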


\begin{proof}
It suffices to show that claim \eqref{i:No dissipation anomaly} holds for every $\u_0 \in H^s_\sigma$. Under the assumption, for every $\u_0 \in H^s_\sigma$, standard energy methods (along the lines of~\cite[pp. 137--139]{RRS16}) give $\lim_{\nu \to 0} \sup_{\u^\nu \in \mathcal{N}_\nu(\u_0)} \norm{\u^\nu-\u}_{L^\infty(0,T;L^2)} = 0$ for all $T > 0$. (Here, $\norm{\u^\nu-\u}_{L^\infty(0,T;L^2)}$ can even be replaced by $\norm{\u^\nu-\u}_{L^\infty(0,T;H^s)}$; see~\cite[Theorem 2.1]{Mas07}). Since $\norm{\u_0}_{L^2}^2 - \norm{\u^\nu(t)}_{L^2}^2 \leq (\norm{\u_0}_{L^2} + \norm{\u^\nu(t)}_{L^2}) \norm{\u_0-\u^\nu(t)}_{L^2}$, the claim follows.
\end{proof}

Corollary \ref{c:Euler} is one rigorous formulation of the contention that global regularity of the Euler equations is incompatible with "typical" occurrence of anomalous energy dissipation. Anomalous energy dissipation has received widespread support from experiments and simulations (see e.g.~\cite{Fri95}), and rigorous anomalous dissipation results have been proved in recent years in, e.g.,~\cite{AV23,BCCDS24,BDL23,BSV23,DEIJ22,EL24}. To the author's knowledge, the case treated here, unforced 3D Navier-Stokes equations, remains open.

\bibliography{Leray-Hopf}

\providecommand{\bysame}{\leavevmode\hbox to3em{\hrulefill}\thinspace}
\providecommand{\MR}{\relax\ifhmode\unskip\space\fi MR }
\providecommand{\MRhref}[2]{%
  \href{http://www.ams.org/mathscinet-getitem?mr=#1}{#2}
}
\providecommand{\href}[2]{#2}
\begin{thebibliography}{10}

\bibitem{AB19}
D.~Albritton and T.~Barker, \emph{Global weak {B}esov solutions of the
  {N}avier-{S}tokes equations and applications}, Arch. Ration. Mech. Anal.
  \textbf{232} (2019), no.~1, 197--263.

\bibitem{ABC}
D.~Albritton, E.~Bru\'{e}, and M.~Colombo, \emph{Non-uniqueness of {L}eray
  solutions of the forced {N}avier-{S}tokes equations}, Ann. of Math. (2)
  \textbf{196} (2022), no.~1, 415--455.

\bibitem{AV23}
S.~Armstrong and V.~Vicol, \emph{Anomalous diffusion by fractal
  homogenization}, arXiv:2305.05048 (2023).

\bibitem{BCD11}
H.~Bahouri, J.-Y. Chemin, and R.~Danchin, \emph{Fourier analysis and nonlinear
  partial differential equations}, Grundlehren der mathematischen
  Wissenschaften [Fundamental Principles of Mathematical Sciences], vol. 343,
  Springer, Heidelberg, 2011.

\bibitem{BSS18}
T.~Barker, G.~Seregin, and V.~\v{S}ver\'{a}k, \emph{On stability of weak
  {N}avier-{S}tokes solutions with large {$L^{3,\infty}$} initial data}, Comm.
  Partial Differential Equations \textbf{43} (2018), no.~4, 628--651.

\bibitem{BC20}
L.~C. Berselli and E.~Chiodaroli, \emph{On the energy equality for the 3{D}
  {N}avier-{S}tokes equations}, Nonlinear Anal. \textbf{192} (2020), 111704,
  24.

\bibitem{BZZ19}
P.~Braz~e Silva, J.~P. Zingano, and P.~R. Zingano, \emph{A note on the
  regularity time of {L}eray solutions to the {N}avier-{S}tokes equations}, J.
  Math. Fluid Mech. \textbf{21} (2019), no.~1, Paper No. 8, 7.

\bibitem{BCCDS24}
E.~Bru\'e, M.~Colombo, G.~Crippa, C.~De~Lellis, and M.~Sorella, \emph{Onsager
  critical solutions of the forced {N}avier-{S}tokes equations}, Commun. Pure
  Appl. Anal. \textbf{23} (2024), no.~10, 1350--1366.

\bibitem{BDL23}
E.~Bru\`e and C.~De~Lellis, \emph{Anomalous dissipation for the forced 3{D}
  {N}avier-{S}tokes equations}, Comm. Math. Phys. \textbf{400} (2023), no.~3,
  1507--1533.

\bibitem{BDSV19}
T.~Buckmaster, C.~de~Lellis, L.~Sz\'ekelyhidi, Jr., and V.~Vicol,
  \emph{Onsager's conjecture for admissible weak solutions}, Comm. Pure Appl.
  Math. \textbf{72} (2019), no.~2, 229--274.

\bibitem{BV19A}
T.~Buckmaster and V.~Vicol, \emph{Convex integration and phenomenologies in
  turbulence}, EMS Surv. Math. Sci. \textbf{6} (2019), no.~1-2, 173--263.

\bibitem{BV19}
\bysame, \emph{Nonuniqueness of weak solutions to the {N}avier-{S}tokes
  equation}, Ann. of Math. (2) \textbf{189} (2019), no.~1, 101--144.

\bibitem{BV21}
\bysame, \emph{Convex integration constructions in hydrodynamics}, Bull. Amer.
  Math. Soc. (N.S.) \textbf{58} (2021), no.~1, 1--44.

\bibitem{BSV23}
J.~Burczak, L.~Sz\'ekelyhidi, Jr., and B.~Wu, \emph{Anomalous dissipation and
  euler flows}, arXiv:2310.02934 (2023).

\bibitem{CCFS08}
A.~Cheskidov, P.~Constantin, S.~Friedlander, and R.~Shvydkoy, \emph{Energy
  conservation and {O}nsager's conjecture for the {E}uler equations},
  Nonlinearity \textbf{21} (2008), no.~6, 1233--1252.

\bibitem{CL20}
A.~Cheskidov and X.~Luo, \emph{Energy equality for the {N}avier-{S}tokes
  equations in weak-in-time {O}nsager spaces}, Nonlinearity \textbf{33} (2020),
  no.~4, 1388--1403.

\bibitem{CL22}
\bysame, \emph{Sharp nonuniqueness for the {N}avier-{S}tokes equations},
  Invent. Math. \textbf{229} (2022), no.~3, 987--1054.

\bibitem{CS10}
A.~Cheskidov and R.~Shvydkoy, \emph{Ill-posedness of the basic equations of
  fluid dynamics in {B}esov spaces}, Proc. Amer. Math. Soc. \textbf{138}
  (2010), no.~3, 1059--1067.

\bibitem{CET94}
P.~Constantin, W.~E, and E.~S. Titi, \emph{Onsager's conjecture on the energy
  conservation for solutions of {E}uler's equation}, Comm. Math. Phys.
  \textbf{165} (1994), no.~1, 207--209.

\bibitem{DM15}
R.~Danchin and P.~B. Mucha, \emph{Critical functional framework and maximal
  regularity in action on systems of incompressible flows}, M\'em. Soc. Math.
  Fr. (N.S.) (2015), no.~143, vi+151.

\bibitem{DLS09}
C.~De~Lellis and L.~Sz\'ekelyhidi, Jr., \emph{The {E}uler equations as a
  differential inclusion}, Ann. of Math. (2) \textbf{170} (2009), no.~3,
  1417--1436.

\bibitem{DLS10}
\bysame, \emph{On admissibility criteria for weak solutions of the {E}uler
  equations}, Arch. Ration. Mech. Anal. \textbf{195} (2010), no.~1, 225--260.

\bibitem{DLS13}
\bysame, \emph{Dissipative continuous {E}uler flows}, Invent. Math.
  \textbf{193} (2013), no.~2, 377--407.

\bibitem{DLS19}
\bysame, \emph{On turbulence and geometry: from {N}ash to {O}nsager}, Notices
  Amer. Math. Soc. \textbf{66} (2019), no.~5, 677--685.

\bibitem{DLS22}
\bysame, \emph{Weak stability and closure in turbulence}, Philos. Trans. Roy.
  Soc. A \textbf{380} (2022), no.~2218, Paper No. 20210091, 16.

\bibitem{DEIJ22}
T.~D. Drivas, T.~M. Elgindi, G.~Iyer, and I.-J. Jeong, \emph{Anomalous
  dissipation in passive scalar transport}, Arch. Ration. Mech. Anal.
  \textbf{243} (2022), no.~3, 1151--1180.

\bibitem{DE19}
T.~D. Drivas and G.~L. Eyink, \emph{An {O}nsager singularity theorem for
  {L}eray solutions of incompressible {N}avier-{S}tokes}, Nonlinearity
  \textbf{32} (2019), no.~11, 4465--4482.

\bibitem{EL24}
T.~M. Elgindi and K.~Liss, \emph{Norm {G}rowth, {N}on-uniqueness, and
  {A}nomalous {D}issipation in {P}assive {S}calars}, Arch. Ration. Mech. Anal.
  \textbf{248} (2024), no.~6, Paper No. 120.

\bibitem{Eyi24}
G.~Eyink, \emph{Onsager's `ideal turbulence' theory}, J. Fluid Mech.
  \textbf{988} (2024), Paper No. P1, 74.

\bibitem{FJR72}
E.~B. Fabes, B.~F. Jones, and N.~M. Rivi\`ere, \emph{The initial value problem
  for the {N}avier-{S}tokes equations with data in {$L\sp{p}$}}, Arch. Rational
  Mech. Anal. \textbf{45} (1972), 222--240.

\bibitem{FGH16}
R.~Farwig, Y.~Giga, and P.-Y. Hsu, \emph{Initial values for the
  {N}avier-{S}tokes equations in spaces with weights in time}, Funkcial. Ekvac.
  \textbf{59} (2016), no.~2, 199--216.

\bibitem{FS09}
R.~Farwig and H.~Sohr, \emph{Optimal initial value conditions for the existence
  of local strong solutions of the {N}avier-{S}tokes equations}, Math. Ann.
  \textbf{345} (2009), no.~3, 631--642.

\bibitem{FSV09}
R.~Farwig, H.~Sohr, and W.~Varnhorn, \emph{On optimal initial value conditions
  for local strong solutions of the {N}avier-{S}tokes equations}, Ann. Univ.
  Ferrara Sez. VII Sci. Mat. \textbf{55} (2009), no.~1, 89--110.

\bibitem{Fef06}
C.~L. Fefferman, \emph{Existence and smoothness of the {N}avier-{S}tokes
  equation}, The millennium prize problems, Clay Math. Inst., Cambridge, MA,
  2006, pp.~57--67.

\bibitem{Fri95}
U.~Frisch, \emph{Turbulence}, Cambridge University Press, Cambridge, 1995, The
  legacy of A. N. Kolmogorov.

\bibitem{Gal00}
G.~P. Galdi, \emph{An introduction to the {N}avier-{S}tokes initial-boundary
  value problem}, Fundamental directions in mathematical fluid mechanics, Adv.
  Math. Fluid Mech., Birkh\"{a}user, Basel, 2000, pp.~1--70.

\bibitem{Gal19}
\bysame, \emph{On the relation between very weak and {L}eray-{H}opf solutions
  to {N}avier-{S}tokes equations}, Proc. Amer. Math. Soc. \textbf{147} (2019),
  no.~12, 5349--5359.

\bibitem{Gal01}
I.~Gallagher, \emph{Profile decomposition for solutions of the
  {N}avier-{S}tokes equations}, Bull. Soc. Math. France \textbf{129} (2001),
  no.~2, 285--316.

\bibitem{GKN23}
V.~Giri, H.~Kwon, and M.~Novack, \emph{The {$L^3$}-based strong onsager
  theorem}, arXiv:2305.18509v2.

\bibitem{GKL23}
A.~Guerra, L.~Koch, and S.~Lindberg, \emph{Nonlinear open mapping principles,
  with applications to the {J}acobian equation and other scale-invariant
  {PDE}s}, Adv. Math. \textbf{415} (2023), Paper No. 108869, 39.

\bibitem{Hop51}
E.~Hopf, \emph{\"{U}ber die {A}nfangswertaufgabe f\"{u}r die hydrodynamischen
  {G}rundgleichungen}, Math. Nachr. \textbf{4} (1951), 213--231.

\bibitem{Ise18}
P.~Isett, \emph{A proof of {O}nsager's conjecture}, Ann. of Math. (2)
  \textbf{188} (2018), no.~3, 871--963.

\bibitem{ISS03}
L.~Iskauriaza, G.~A. Ser\"{e}gin, and V.~Shverak,
  \emph{{$L_{3,\infty}$}-solutions of {N}avier-{S}tokes equations and backward
  uniqueness}, Uspekhi Mat. Nauk \textbf{58} (2003), no.~2(350), 3--44.

\bibitem{ESS03}
\bysame, \emph{{$L_{3,\infty}$}-solutions of {N}avier-{S}tokes equations and
  backward uniqueness}, Uspekhi Mat. Nauk \textbf{58} (2003), no.~2(350),
  3--44.

\bibitem{JS13}
H.~Jia and V.~\v{S}ver\'{a}k, \emph{Minimal {$L^3$}-initial data for potential
  {N}avier-{S}tokes singularities}, SIAM J. Math. Anal. \textbf{45} (2013),
  no.~3, 1448--1459.

\bibitem{LR22}
P.~G. Lemari\'{e}-Rieusset, \emph{A remark on weak-strong uniqueness for
  suitable weak solutions of the {N}avier-{S}tokes equations}, Rev. Mat.
  Iberoam. \textbf{38} (2022), no.~7, 2217--2248.

\bibitem{Ler34}
J.~Leray, \emph{Sur le mouvement d'un liquide visqueux emplissant l'espace},
  Acta Math. \textbf{63} (1934), no.~1, 193--248.

\bibitem{LS18}
T.~M. Leslie and R.~Shvydkoy, \emph{Conditions implying energy equality for
  weak solutions of the {N}avier-{S}tokes equations}, SIAM J. Math. Anal.
  \textbf{50} (2018), no.~1, 870--890.

\bibitem{Lio60}
J.~L. Lions, \emph{Sur la r\'egularit\'e{} et l'unicit\'e{} des solutions
  turbulentes des \'equations de {N}avier {S}tokes}, Rend. Sem. Mat. Univ.
  Padova \textbf{30} (1960), 16--23.

\bibitem{Mas07}
N.~Masmoudi, \emph{Remarks about the inviscid limit of the {N}avier-{S}tokes
  system}, Comm. Math. Phys. \textbf{270} (2007), no.~3, 777--788.

\bibitem{NV23}
M.~Novack and V.~Vicol, \emph{An intermittent {O}nsager theorem}, Invent. Math.
  \textbf{233} (2023), no.~1, 223--323.

\bibitem{RRS16}
J.~C. Robinson, J.~L. Rodrigo, and W.~Sadowski, \emph{The three-dimensional
  {N}avier-{S}tokes equations}, Cambridge Studies in Advanced Mathematics, vol.
  157, Cambridge University Press, Cambridge, 2016, Classical theory.

\bibitem{RS11}
W.~Rusin and V.~\v{S}ver\'{a}k, \emph{Minimal initial data for potential
  {N}avier-{S}tokes singularities}, J. Funct. Anal. \textbf{260} (2011), no.~3,
  879--891.

\bibitem{ST02}
I.~Schindler and K.~Tintarev, \emph{An abstract version of the concentration
  compactness principle}, Rev. Mat. Complut. \textbf{15} (2002), no.~2,
  417--436.

\bibitem{Ser12}
G.~Seregin, \emph{Leray-{H}opf solutions to {N}avier-{S}tokes equations with
  weakly converging initial data}, Mathematical aspects of fluid mechanics,
  London Math. Soc. Lecture Note Ser., vol. 402, Cambridge Univ. Press,
  Cambridge, 2012, pp.~251--258.

\bibitem{SS17}
G.~Seregin and V.~\v{S}ver\'{a}k, \emph{On global weak solutions to the
  {C}auchy problem for the {N}avier-{S}tokes equations with large
  {$L_3$}-initial data}, Nonlinear Anal. \textbf{154} (2017), 269--296.

\bibitem{Shi74}
M.~Shinbrot, \emph{The energy equation for the {N}avier-{S}tokes system}, SIAM
  J. Math. Anal. \textbf{5} (1974), 948--954.

\bibitem{Shv10}
R.~Shvydkoy, \emph{Lectures on the {O}nsager conjecture}, Discrete Contin. Dyn.
  Syst. Ser. S \textbf{3} (2010), no.~3, 473--496.

\bibitem{Tao07}
T.~Tao, \emph{A quantitative formulation of the global regularity problem for
  the periodic {N}avier-{S}tokes equation}, Dyn. Partial Differ. Equ.
  \textbf{4} (2007), no.~4, 293--302.

\bibitem{Tao08}
\bysame, \emph{Structure and randomness}, American Mathematical Society,
  Providence, RI, 2008, Pages from year one of a mathematical blog.

\bibitem{Tao13}
\bysame, \emph{Localisation and compactness properties of the {N}avier-{S}tokes
  global regularity problem}, Anal. PDE \textbf{6} (2013), no.~1, 25--107.

\bibitem{Wie11}
E.~Wiedemann, \emph{Existence of weak solutions for the incompressible {E}uler
  equations}, Ann. Inst. H. Poincar\'e{} C Anal. Non Lin\'eaire \textbf{28}
  (2011), no.~5, 727--730.

\end{thebibliography}
\bibliographystyle{amsplain}
\end{document}